\documentclass[10pt]{amsart}
\usepackage{amsmath,amssymb,mathtools}
\usepackage[T1]{fontenc}
\usepackage{hyperref}
\hypersetup{hidelinks}
\newtheorem{theorem}{Theorem}[section]
\newtheorem{proposition}[theorem]{Proposition}
\newtheorem{lemma}[theorem]{Lemma}
\newtheorem{corollary}[theorem]{Corollary}
\theoremstyle{definition}
\newtheorem{definition}{Definition}[section]
\newtheorem{example}[theorem]{Example}
\theoremstyle{remark}

\newcommand{\C}{\mathbb C}
\newcommand{\R}{\mathbb R}
\newcommand{\Z}{\mathbb Z}
\newcommand{\PP}{\mathbb P}
\newcommand{\ii}{\sqrt{-1}}
\newcommand{\ddbar}{\partial\bar\partial}
\newcommand{\tr}{\operatorname{tr}}
\newcommand{\adj}{\operatorname{adj}}

\newcommand{\Span}{\operatorname{span}}
\newcommand{\Tot}{\operatorname{Tot}}
\newcommand{\Ad}{\operatorname{Ad}}
\newcommand{\rad}{\operatorname{rad}}
\newcommand{\aff}{\mathfrak{aff}}
\newcommand{\Lev}{\mathcal L}
\numberwithin{equation}{section}
\title[Gauduchon metrics]{Existence and uniqueness of conformal Gauduchon metrics}
\author{Hanzhang Yin}
\address{School of Mathematics, Harbin Institute of Technology, Harbin, Heilongjiang 150001, China}
\email{3199877769@qq.com}
\email{YinHZ@hit.edu.cn}
\thanks{The author is supported by the Research Start-up Fund of Harbin Institute of Technology.}
\date{}
\begin{document}
\begin{abstract}
In this paper, we prove that every Hermitian conformal class on a connected noncompact complex manifold
of dimension at least two contains a Gauduchon metric. We give a sufficient
condition for uniqueness of such Gauduchon metrics up to scaling in terms of functions bounded from
above and their Levi forms. This condition is necessary when the manifold
has at least two ends. In that case, uniqueness in a fixed conformal class
is equivalent to the assertion that every positive solution of the complex
Laplace equation for a metric in that class is constant.
For a connected noncompact complex Lie group of dimension $n$,
uniqueness in every conformal class is equivalent to the assertion that,
for every Hermitian metric, every positive solution of the complex Laplace
equation is constant. These properties hold exactly when the complex
tangent space of a maximal compact subgroup has dimension $n-1$; such a
group is abelian. In a conformal class containing a balanced metric, the
same equivalence holds without a restriction on the number of ends.
We also prove uniqueness for manifolds with a suitable potential on a
compactification with compact fibers over a point, the real line, or a
parabolic Riemann surface. The examples include compact pluripolar
deletions, partially deleted compact fibers, and semipositive line bundles.
We construct conformal Gauduchon metrics which are not related by scaling
on Stein manifolds and on the total space of $\mathcal O_{\PP^1}(1)^{\oplus2}$.
\par\noindent\textit{Keywords.} Gauduchon metrics, noncompact complex manifolds,
complex Lie groups, complex Laplace equation.
\end{abstract}
\maketitle

\section{Introduction}
Let $M$ be a connected noncompact complex manifold of complex
dimension $n\geq2$. A
Hermitian metric on a complex manifold is identified with its positive real
$(1,1)$-form $\omega$. The metric is
\emph{Gauduchon} when
\begin{equation}\label{eq:gauduchon}
\ddbar\omega^{n-1}=0.
\end{equation}
Here $\partial$ and $\bar\partial$ denote the $(1,0)$ and $(0,1)$ parts
of exterior differentiation, and the power denotes exterior
product. Two metrics are conformal if one is a smooth positive function
times the other. Uniqueness up to scaling means that two such Gauduchon
metrics differ by a positive constant.

Gauduchon proved that every Hermitian conformal class on a compact
connected complex manifold contains a Gauduchon metric, unique up to
scaling \cite{Gauduchon77}. The problem of prescribing the volume form, usually called
the Gauduchon conjecture, was solved by Sz\'ekelyhidi, Tosatti, and
Weinkove \cite{STW}. We study existence and uniqueness in conformal
classes on noncompact manifolds. Existence follows from the relation
between positive solutions of an elliptic equation and its formal adjoint.

\begin{theorem}\label{thm:existence}
Let $M$ be a connected noncompact complex manifold of complex dimension
$n\geq2$. For every smooth Hermitian metric $\alpha$ and every $x_0\in M$,
there is a smooth real function $\rho$ such that
\begin{equation}\label{eq:existence}
\rho>0,\qquad \rho(x_0)=1,\qquad
\ddbar(\rho\alpha^{n-1})=0.
\end{equation}
Then $\rho^{1/(n-1)}\alpha$ is a conformal Gauduchon metric.
\end{theorem}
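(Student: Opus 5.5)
The operator $P\rho := \ii\,\ddbar(\rho\alpha^{n-1})/\alpha^n$, taking functions to functions, is a second-order linear elliptic operator. Its formal adjoint with respect to the volume form $\alpha^n/n!$ is, up to a positive constant factor, the complex Laplacian
\[
L u = \frac{n\,\ii\ddbar u\wedge\alpha^{n-1}}{\alpha^n},
\]
since $\int u\,\ddbar(\rho\alpha^{n-1}) = \int \rho\,\ddbar u\wedge\alpha^{n-1}$ for compactly supported $u$. So $P = L^*$ is an elliptic operator whose principal symbol is that of $\tr_\alpha \ii\ddbar$, positive definite. Moreover $L$ has no zeroth-order term, so $L1=0$. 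The plan is to produce a positive solution of $P\rho=0$ on the noncompact manifold. The strategy is the standard one: solve Dirichlet problems on an exhaustion, normalize, and pass to the limit using the Harnack inequality.

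Concretely, I will fix an exhaustion $\Omega_1\Subset\Omega_2\Subset\cdots$ of $M$ by smoothly bounded, relatively compact connected domains containing $x_0$. On each $\Omega_k$ I need a positive function $\rho_k$ with $P\rho_k=0$ in $\Omega_k$. To get one, I note that the maximum principle holds for $L$ on $\Omega_k$: $L$ has zero order coefficient zero, and the boundary $\partial\Omega_k\neq\emptyset$ because $M$ is noncompact. Hence the principal Dirichlet eigenvalue $\lambda_1(L,\Omega_k)$ is positive. The principal eigenvalue of the adjoint $P$ is the same, by Krein--Rutman or the Berestycki--Nirenberg--Varadhan theory. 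So $P$ also satisfies the maximum principle on $\Omega_k$, and the Dirichlet problem $P v = -P(1)$ in $\Omega_k$ with $v=0$ on $\partial\Omega_k$ is uniquely solvable. Setting $\rho_k = 1+v$ gives $P\rho_k=0$ in $\Omega_k$ with $\rho_k=1$ on the boundary.

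The step I expect to be the main obstacle is proving $\rho_k>0$, since $P$ has a zero-order term $c=P(1)$ of indefinite sign. Here the positivity of the principal eigenvalue is exactly what is needed. The generalized maximum principle applies once $\lambda_1>0$, and it gives $\rho_k>0$ from boundary data $1>0$ and $P\rho_k=0$. The alternative route, if this fails, is to take $\rho_k$ to be the positive principal eigenfunction of $P$ on $\Omega_k$, with eigenvalue $\mu_k=\lambda_1(\Omega_k)$. Then $\mu_k\searrow\mu_\infty\ge 0$, and the limit would solve $P\rho=-\mu_\infty\rho$. That is not quite the statement, so I prefer the Dirichlet-data approach.

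With $\rho_k>0$ in hand, I normalize to $\tilde\rho_k=\rho_k/\rho_k(x_0)$. For each fixed compact $K\subset M$, the Harnack inequality for positive solutions of the elliptic equation $P\tilde\rho_k=0$ on a connected neighborhood of $K$ inside $\Omega_k$ (for $k$ large) gives uniform bounds $C_K^{-1}\le\tilde\rho_k\le C_K$. Schauder interior estimates then give $C^{m}$ bounds on compacta for every $m$. A diagonal subsequence converges in $C^\infty_{loc}$ to $\rho$ with $P\rho=0$ and $\rho(x_0)=1$, and the Harnack bounds pass to the limit to give $\rho\ge C_K^{-1}>0$. Thus $\ddbar(\rho\alpha^{n-1})=0$.

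Finally, put $\omega=\rho^{1/(n-1)}\alpha$. Then $\omega^{n-1}=\rho\alpha^{n-1}$, so $\ddbar\omega^{n-1}=0$. Hence $\omega$ is Gauduchon and conformal to $\alpha$.
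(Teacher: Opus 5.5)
Your argument is correct, but it takes a different route from the paper. The paper's proof is essentially one line. Since $P_\alpha 1=0$, the operator $P_\alpha=-L_\alpha$ has a positive solution. Pinchover's criticality theory (Proposition~\ref{prop:pinchover}) says that an operator and its formal adjoint are simultaneously critical or subcritical, so in particular $P_\alpha^*$ also has a positive solution. Local elliptic regularity makes it smooth, and normalizing at $x_0$ and using \eqref{eq:adjoint} gives \eqref{eq:existence}.

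You instead prove the needed special case of that adjoint statement by hand, in four steps:
\begin{enumerate}
\item the Dirichlet principal eigenvalue of $-L$ is positive on each exhaustion domain, because $L1=0$ and $\partial\Omega_k\neq\varnothing$;
\item an operator and its formal adjoint have the same principal eigenvalue on a bounded smooth domain;
\item the refined maximum principle then gives positive solutions with boundary value $1$, despite the indefinite zeroth order term $P(1)$;
\item Harnack and Schauder estimates give compactness along the exhaustion.
\end{enumerate}
This is the standard mechanism behind the cited result. Your sign conventions are slightly loose: the positive principal eigenvalue is that of $-L$, and the refined maximum principle is applied to $-P$. Nothing in the argument depends on this.

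The paper's route is shorter and places existence inside the critical/subcritical framework that it reuses immediately for uniqueness. Yours is self-contained modulo bounded-domain elliptic theory, and it shows exactly where noncompactness enters. On a compact $M$ there is no exhaustion by domains with nonempty boundary and the principal eigenvalue is $0$, so one needs Gauduchon's Fredholm argument instead.
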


The uniqueness question can be expressed in terms of the complex Hessian
of a bounded function. We use the following definition throughout.
\begin{definition}\label{def:positive-levi}
For a smooth real function $f$, its \emph{Levi form} at $p$ is the Hermitian
form on the holomorphic tangent space $T^{1,0}_pM$ with coordinate matrix
$(f_{z_j\bar z_k}(p))$. We denote this form by $\Lev_pf$;
$f_{z_j\bar z_k}=\partial^2f/(\partial z_j\partial\bar z_k)$.
We say that $f$ has a \emph{positive Levi direction
at every point} if, for each $p\in M$, there is a vector
$v\in T^{1,0}_pM$ such that $\Lev_pf(v,\bar v)>0$.
The direction may depend on $p$; no continuous choice of directions is
required.
\end{definition}
For each Hermitian metric $\alpha$, put
\begin{equation}\label{eq:trace-intro}
L_\alpha f=\tr_\alpha(\ii\ddbar f),\qquad P_\alpha=-L_\alpha.
\end{equation}
Here $\tr_\alpha$ denotes the trace of a $(1,1)$-form with respect to $\alpha$.
We call $L_\alpha$ the \emph{complex Laplacian}; it is also known as the
Chern Laplacian, with the sign and normalization in
\eqref{eq:trace-intro}. A \emph{positive solution of the complex Laplace
equation} is a smooth real function $u>0$ satisfying $L_\alpha u=0$.
Since $L_\alpha1=0$, uniqueness of positive solutions up to scaling means
that every positive solution is constant. The formal adjoint is defined
in Definition~\ref{def:elliptic} and is given for $P_\alpha$ by
\eqref{eq:adjoint}. Definition~\ref{def:elliptic} also explains when an
operator is critical. Since $P_\alpha1=0$, Proposition~\ref{prop:pinchover}
shows that $P_\alpha$ is critical if and only if every positive smooth
supersolution is constant.

An \emph{end} is a compatible choice of a connected component which is
not relatively compact outside a compact exhaustion. We say that $M$
has at least two ends if some compact set separates two components
whose closures in $M$ are noncompact.

We write $[\alpha]$ for the conformal class of a Hermitian metric $\alpha$.

\begin{theorem}[Levi criterion]\label{thm:levi}
Let $M$ be as in Theorem~\ref{thm:existence}. Consider the following
conditions:
\begin{enumerate}
\item There is no smooth real function on $M$ bounded from above with a
positive Levi direction at every point.
\item $P_\alpha$ is critical for every smooth Hermitian metric $\alpha$.
\item Every Hermitian conformal class contains a Gauduchon metric unique
up to scaling.
\item For every smooth Hermitian metric $\alpha$, every positive solution
of the complex Laplace equation $L_\alpha u=0$ is constant.
\end{enumerate}
Conditions (1) and (2) are equivalent and imply both (3) and (4).
If $M$ has at least two ends, all four conditions are equivalent.
More precisely, in that case, for each fixed Hermitian metric $\alpha$,
the following assertions are equivalent: $P_\alpha$ is critical;
the Gauduchon metric in $[\alpha]$ is unique up to scaling; and every
positive solution of $L_\alpha u=0$ is constant.
\end{theorem}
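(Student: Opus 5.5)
We work throughout with the standard criticality theory for second-order elliptic operators without zeroth-order term, as recalled in Definition~\ref{def:elliptic} and Proposition~\ref{prop:pinchover}. Since $P_\alpha1=0$, $P_\alpha$ is critical exactly when every positive smooth supersolution ($P_\alpha u\ge0$, i.e.\ $L_\alpha u\le0$) is constant. Criticality is symmetric under formal adjoint, and in the critical case the positive solutions of $P_\alpha u=0$ and of $P_\alpha^*\rho=0$ are each unique up to scaling. By Theorem~\ref{thm:existence} and its proof, $\ddbar(\rho\alpha^{n-1})=0$ is equivalent to $P_\alpha^*\rho=0$ up to a fixed positive volume factor. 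So Gauduchon metrics in $[\alpha]$ correspond, up to scaling, to positive solutions of $P_\alpha^*$.

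\emph{(1)$\Leftrightarrow$(2).} Suppose $P_\alpha$ is subcritical for some $\alpha$. Then there is a nonconstant positive supersolution $u$, and we may take $L_\alpha u\le0$ with $L_\alpha u\not\equiv0$; a minimal Green-function-type supersolution can even be arranged with $L_\alpha u<0$ off a compact set. Put $f=-u$. Then $f$ is bounded above by $0$, and $\tr_\alpha(\ii\ddbar f)=-L_\alpha u\ge0$.

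To obtain a positive Levi direction at every point, we would rather pick a positive (Agmon) potential $G$ with $-L_\alpha G=W>0$ everywhere, with $W$ a strictly positive weight; this is possible precisely in the subcritical case. Then $f=-G$ has $\tr_\alpha\Lev f>0$ everywhere, hence an eigenvector with positive eigenvalue at each point. Hence (1) fails.

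Conversely, suppose $f\le C$ has a positive Levi direction at every point. We construct a Hermitian metric $\alpha$ with $L_\alpha f>0$. Pointwise, $\{\alpha:\tr_\alpha\Lev_pf>0\}$ is a nonempty convex open cone. Indeed, if $\Lev_pf(v,\bar v)>0$, take $\alpha$ nearly degenerate in directions other than $v$, so that the inverse metric puts large weight on $v$. Convexity holds because $\tr_\alpha\beta=\langle\alpha^{-1},\beta\rangle$ and we may work with the inverse metric $\alpha^{-1}$, which ranges over a convex cone. A partition of unity then produces a smooth $\alpha$ with $L_\alpha f>0$ everywhere.

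Then $u=C+1-f$ is a positive, nonconstant supersolution, so $P_\alpha$ is subcritical. This convexity/partition-of-unity step is where care is needed: one averages the inverse metrics $\alpha^{-1}$ (not $\alpha$) so the condition is linear.

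\emph{(2)$\Rightarrow$(3),(4).} Criticality of $P_\alpha$ gives uniqueness of positive solutions of $P_\alpha u=0$, which are then constants; this is (4). It also gives criticality of $P_\alpha^*$ and hence uniqueness of $\rho$ up to scaling, which is (3).

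\emph{Two ends.} This is the main obstacle. We must show that subcriticality of $P_\alpha$ produces both a nonconstant positive solution of $P_\alpha u=0$ and two nonproportional positive solutions of $P_\alpha^*$. We take a compact $K$ separating two noncompact components, with ends $E_1$ and $E_2$. For an exhaustion $\Omega_j$, solve $P_\alpha u_j=0$ on $\Omega_j\setminus K$ with boundary data $1$ on $\partial\Omega_j\cap\bar E_1$ and $0$ on $\partial\Omega_j\cap\bar E_2$, extended harmonically across $K$. Equivalently, solve on $\Omega_j$ with these boundary values; the maximum principle gives $0<u_j<1$.

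Harnack's inequality yields a subsequence converging to a solution $u$ with $0\le u\le1$. Subcriticality, via the existence of the Green function, or equivalently non-parabolicity of one end, should prevent $u$ from being constant. If $u$ were constant, then $\min(u_j,\ldots)$-type comparisons would build a nonconstant supersolution bounded by constants, leading to a contradiction with the minimal growth at one end. This yields the failure of (4).

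For the adjoint we run the same construction with $P_\alpha^*$, whose subcriticality is equivalent. This gives two positive solutions concentrating on different ends, hence nonproportional, so (3) fails. Keeping track of which end is non-parabolic for $P_\alpha$ versus $P_\alpha^*$ is the delicate point, and we expect to handle it by choosing the boundary data so that the limit is forced to separate the ends.
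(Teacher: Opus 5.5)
Your treatment of (1)$\Leftrightarrow$(2) and of (2)$\Rightarrow$(3),(4) is sound, and in two places it is more elementary than the paper's.
\begin{itemize}
\item Averaging inverse metrics with a partition of unity replaces the selection result cited in Lemma~\ref{lem:trace-selection}.
\item Condition (1) only asks for an upper bound, so $f=-G\leq0$ makes the global bound of Lemma~\ref{lem:green-bound} unnecessary. The existence of $G>0$ with $P_\alpha G>0$ everywhere is the series construction of Lemma~\ref{lem:levi-function}, which you invoke rather than prove.
\end{itemize}

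The genuine gap is in the two-ends part. Your exhaustion solutions take values in $[0,1]$, so their limit is a bounded solution. Subcriticality does not produce nonconstant bounded solutions. A counterexample:
\begin{itemize}
\item Take $M=\C^2\setminus\{0\}$ with the Euclidean metric, so $L_\alpha=\tfrac14\Delta_{\R^4}$. It has two ends and $P_\alpha$ is subcritical.
\item By B\^ocher's theorem every positive solution is $a+b|z|^{-2}$.
\item With data $1$ on $|z|=R_j$ and $0$ on $|z|=r_j$, your solutions are $(r_j^{-2}-|z|^{-2})/(r_j^{-2}-R_j^{-2})\to1$. With the roles exchanged the limit is $0$.
\item No choice of bounded data can help, since every bounded positive solution here is constant. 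Yet $|z|^{-2}$ is a nonconstant positive solution, and, the metric being K\"ahler, it is also a second Gauduchon density.
\end{itemize}
The same degeneration occurs whenever one of the two ends is recurrent, and subcriticality only forces some end to be transient. This is exactly the point you flag and leave open. The proposed $\min$-type contradiction cannot be carried out.

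The missing idea is to use unbounded normalized solutions together with minimal growth.
\begin{itemize}
\item Let $K$ separate ends $E_1,E_2$, and take $y_k\to\infty$ inside $E_i$. Harnack's inequality gives a locally uniform limit $h_i$ of $G(\cdot,y_k)/G(x_0,y_k)$.
\item Each $h_i$ is a positive solution on $M$ with minimal growth in every component of $M\setminus K$ other than $E_i$. This holds because each Green function has minimal growth away from its pole. That property, expressed through the minimal exterior Dirichlet solution, passes to such limits.
\item If $h_1=ch_2$, then $h_1$ has minimal growth at infinity in all of $M$. It is then a ground state, and $P_\alpha$ would be critical.
\end{itemize}
Thus $h_1$ and $h_2$ are nonproportional, so one of them is nonconstant. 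The same argument for $P_\alpha^*$, which is subcritical together with $P_\alpha$, gives two nonproportional adjoint densities. This is what the paper takes from Taylor's Martin boundary theory, in Proposition~\ref{prop:taylor} and in the Martin kernel argument of Section~\ref{sec:harmonic}.
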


Compact complex directions give a concrete form of the uniqueness
criterion on Lie groups. A complex Lie group is a group with holomorphic
multiplication and inversion. Let $\mathfrak g=T_1G$ be its complex Lie
algebra. Here $1$ is the identity element of $G$, and $T_1G$ is its tangent
space at the identity.
A \emph{maximal compact subgroup} is a compact subgroup $K$ which
is contained in no larger compact subgroup. Its Lie algebra
$\mathfrak k=T_1K$ is a real subspace of the underlying real vector space
of $\mathfrak g$.
Thus
$\mathfrak k\cap\ii\mathfrak k$ is an intersection of real subspaces
inside $\mathfrak g$. It is stable under multiplication by $\ii$ and is
the largest complex subspace contained in $\mathfrak k$. Its complex
dimension is the CR dimension of $K$. Maximal compact subgroups are
conjugate, so this dimension is independent of the choice of $K$
\cite[p.~47]{KKO}.

\begin{theorem}[Lie group criterion]\label{thm:lie}
Let $G$ be a connected noncompact complex Lie group of complex dimension
$n\geq2$, and let $K$ be a maximal compact subgroup with real Lie algebra
$\mathfrak k$. The following conditions are equivalent:
\begin{enumerate}
\item For every Hermitian metric $\alpha$ on $G$, every positive solution
of the complex Laplace equation $L_\alpha u=0$ is constant.
\item Every Hermitian conformal class on $G$ contains a Gauduchon metric
unique up to scaling.
\item There is no smooth real function on $G$ bounded from above with a
positive Levi direction at every point.
\item The complex tangent space of $K$ has dimension $n-1$, that is,
\begin{equation}\label{eq:lie-criterion}
\dim_\C(\mathfrak k\cap\ii\mathfrak k)=n-1.
\end{equation}
\end{enumerate}
When these conditions hold, $G$ is abelian.
\end{theorem}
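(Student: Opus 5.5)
The plan is to reduce everything to Theorem~\ref{thm:levi}, using the structure theory of complex Lie groups to count ends and to build test functions. By Theorem~\ref{thm:levi}, (3) is equivalent to criticality of every $P_\alpha$ and implies both (1) and (2). It therefore suffices to prove $(4)\Rightarrow(3)$ and $\neg(4)\Rightarrow\neg(1),\neg(2)$. It will be convenient to prove the latter in the stronger form: if (4) fails, we produce a function contradicting (3).

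First I would record the topology. A connected Lie group is diffeomorphic to $K\times\R^{m}$ (Iwasawa–Malcev), with $m=2n-\dim_\R\mathfrak k$. Since $\mathfrak k+\ii\mathfrak k\subset\mathfrak g$, we have $2\dim_\R\mathfrak k-2d\le 2n$, where $d=\dim_\C(\mathfrak k\cap\ii\mathfrak k)$. If $d=n-1$, this forces $\dim_\R\mathfrak k\in\{2n-2,2n-1\}$. This gives two cases.
\begin{itemize}
\item If $\dim\mathfrak k=2n-2$, then $K$ is a compact complex subgroup, hence a compact complex torus. It is central, because $\Ad|_K$ is a holomorphic map from a compact connected complex manifold into $GL(\mathfrak g)$ and so is constant. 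The quotient $G\to G/K$ is then a holomorphic fibration with compact fibers over a one‑dimensional connected complex Lie group, i.e.\ $\C$ or $\C^*$, which is a parabolic Riemann surface.
\item If $\dim\mathfrak k=2n-1$, then $G\cong K\times\R$ has two ends. In this case $G$ fibers over $\R$ with compact Levi‑flat fibers $gK$, each foliated by $(n-1)$‑dimensional complex leaves.
\end{itemize}
In both cases, $(4)\Rightarrow(3)$ should follow from the uniqueness results for manifolds admitting a suitable potential on a compactification with compact fibers over a point, the real line, or a parabolic Riemann surface; the paper promises these later. If I had to argue directly, I would take $f$ bounded above and restrict it to a fiber. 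At a fiberwise maximum point, the Levi form is $\le0$ on the complex directions of the fiber, which span a complex hyperplane. Since the form has a positive direction there, that direction is transverse, so $f$ is subharmonic transversally there. I would then use parabolicity of $\C$, $\C^*$ or $\R$ applied to $\max_{gK}f$ to derive a contradiction.

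For $\neg(4)$, i.e.\ $d\le n-2$, I would construct a plurisubharmonic $\varphi$ on $G$ whose Levi form has at least two positive eigenvalues everywhere. The idea is to use the $n-d\ge2$ complex directions transverse to $\mathfrak k\cap\ii\mathfrak k$: on the complexification of the torus part take $\sum(\log|w_j|)^2$-type potentials, and on unipotent $\C^m$ factors take $|z|^2$, pulled back along a holomorphic quotient with at least two‑dimensional base. Then put $f=-e^{-\varphi}$. This $f$ is bounded above, and
\[
\Lev f=e^{-\varphi}\bigl(\Lev\varphi-\partial\varphi\otimes\bar\partial\varphi\bigr).
\]
The rank‑one subtraction kills at most one direction, so a positive Levi direction survives at every point. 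This contradicts condition (3) of the Lie group criterion and hence, by Theorem~\ref{thm:levi}, also (1) and (2).

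Finally, for abelianness under (4): in the first case $K$ is central and $G/K$ is abelian, so $G$ is a central extension of a one‑dimensional group by a torus. The commutator pairing is holomorphic and $K$‑valued, hence constant, which forces commutativity. In the second case the complex leaves of $K$ generate a central compact complex piece, and a similar argument applies.

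The main obstacle is the construction for $\neg(4)$: producing a $\varphi$ with Levi rank at least $2$ everywhere on a general (possibly non‑reductive, non‑abelian) complex Lie group. I would first try the holomorphic quotient $G\to G/[G,G]$ combined with the reductive Levi decomposition.
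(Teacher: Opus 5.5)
Your reduction of $\neg$(4) to a function violating (3) has the implication backwards. Theorem~\ref{thm:levi} gives (3)$\Rightarrow$(1) and (3)$\Rightarrow$(2). So $\neg$(1) and $\neg$(2) each imply $\neg$(3), and exhibiting a function that violates (3) is the \emph{weaker} statement. It yields neither $\neg$(1) nor $\neg$(2).

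The converse implications in Theorem~\ref{thm:levi} are proved only for manifolds with at least two ends, and they are unavailable exactly where you need them. If $d=\dim_\C(\mathfrak k\cap\ii\mathfrak k)\le n-2$, your own count gives $\dim_\R\mathfrak k\le n+d\le 2n-2$, so $G\cong K\times\R^m$ with $m\ge2$ has one end.

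A subcritical $P_\alpha$ really is not enough. Take the Euclidean metric on $\C^n$, a group with $d=0$. Here $P_\alpha$ is a multiple of the flat Laplacian on $\R^{2n}$ with $2n\ge4$, hence subcritical. Yet its positive solutions are constant, and since the metric is K\"ahler, Theorem~\ref{thm:harmonic-balanced} gives uniqueness of the Gauduchon metric in its class. So even a finished construction of $f=-e^{-\varphi}$ proves only $\neg$(3). That construction is in fact easy, e.g.\ $\varphi=(\Re z_1)^2+(\Re z_2)^2$ in the abelian normal form, or a pulled-back strictly plurisubharmonic exhaustion of a Stein quotient.

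The missing idea is to construct, when $d\le n-2$, a specific conformal class with two non-proportional Gauduchon metrics, and a specific metric with a nonconstant positive solution of $L_\alpha u=0$. The paper builds cofactor fields with $\mathcal DA=\mathcal D(uA)=0$ via Lemma~\ref{lem:volume-frame}, using:
\begin{itemize}
\item a Poisson transform on the semisimple quotient;
\item the kernel $(|z|^2+|w|^2)^{-1}$ composed with characters to $\C^2$, or with $G\to\operatorname{Aff}(\C)$;
\item the explicit pair \eqref{eq:abelian-pair} for abelian groups.
\end{itemize}
The first metric from Lemma~\ref{lem:volume-frame} is balanced, so Theorem~\ref{thm:harmonic-balanced} turns each nonabelian pair into a nonconstant positive solution. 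In the abelian case $u=e^{x-y^2/2}$ supplies one directly. Then $\neg$(3) follows from $\neg$(1) and Theorem~\ref{thm:levi}, not the other way round.

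Your treatment of (4)$\Rightarrow$(3) is fine when $\dim_\R\mathfrak k=2n-2$: then $G\to G/K$ is a proper holomorphic map onto $\C$ or $\C^*$, and Corollary~\ref{cor:proper-projection} applies. When $\dim_\R\mathfrak k=2n-1$, however, $G\cong K\times\R$ is only a diffeomorphism. The fiber-extremum argument needs a proper \emph{pluriharmonic} function whose levels are the cosets $gK$, because it uses that the Levi form of $q\circ p$ is $q''(p)\,\partial p\otimes\bar\partial p$.

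You postpone abelianness, and your arguments for it do not hold as stated. A holomorphic map from a noncompact manifold such as $\C$ into a compact torus need not be constant. Also, the complex subgroup tangent to $\mathfrak k\cap\ii\mathfrak k$ need not be compact: for the Cousin group of Example~\ref{ex:cousin}, which is exactly your second case, its image is dense in $K\cong\mathbb T^3$.

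Prove abelianness first, as in Lemma~\ref{lem:central}. For $X,\ii X\in\mathfrak k$, the map $z\mapsto\Ad(\exp zX)$ is bounded and entire, so $\mathfrak k\cap\ii\mathfrak k$ is central. A center of codimension at most one forces $\mathfrak g$ to be abelian. Lemma~\ref{lem:period-normal} then supplies $\Re z_1$ as the required proper pluriharmonic projection.
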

The metrics in this theorem are arbitrary smooth Hermitian metrics.
Completeness and invariance under the group are not assumed. The proof
uses Theorem~\ref{thm:levi} and Corollary~\ref{cor:proper-projection} in
the abelian case. Proposition~\ref{prop:nonabelian} treats nonabelian
groups by constructions using solvable and semisimple quotients. Examples are given in Section~\ref{sec:lie-examples}.

The argument using compact fibers also applies beyond Lie groups. A potential
which tends to $-\infty$ at the added boundary ensures that the fiber
minima in \eqref{eq:penalized} are attained. On a parabolic base, their
limiting function is superharmonic and hence constant.

\begin{theorem}[Compactifications with compact fibers]\label{thm:fibers}
Let $M$ be as in Theorem~\ref{thm:existence}. Suppose that it has a
compactification with compact fibers over a point, $\R$, or a parabolic
Riemann surface, with a plurisubharmonic potential at the added boundary,
in the sense of Definition~\ref{def:completion}. Write $\bar\pi:X\to B$
for the completed projection and $E=X\setminus M$ for the added boundary.
If the base is not a point, assume that there exists $s_0\in B$ such that
$E\cap\bar\pi^{-1}(s_0)=\varnothing$, so the whole completed fiber over
$s_0$ lies in $M$. Then every
$P_\alpha$ is critical, and every Hermitian conformal class on $M$
contains a Gauduchon metric unique up to scaling.
\end{theorem}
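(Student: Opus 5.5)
By the Levi criterion (Theorem~\ref{thm:levi}), conditions (1) and (2) are equivalent and imply (3). So I will verify condition (1): there is no smooth real function $f$ on $M$ that is bounded above and has a positive Levi direction at every point. I argue by contradiction. Suppose such an $f$ exists, with $f\le C$. Let $\varphi$ be the plurisubharmonic potential of Definition~\ref{def:completion}; it tends to $-\infty$ at the added boundary $E$.

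For $\varepsilon>0$, put $f_\varepsilon=f+\varepsilon\varphi$ and consider the fiber minima
\[
m_\varepsilon(s)=\min_{\bar\pi^{-1}(s)\cap M} f_\varepsilon ,
\]
as in \eqref{eq:penalized}. (If $E$ lies where $\varphi\to-\infty$, one should instead take maxima, so that the penalty pushes the extremum away from $E$. I will fix the sign convention so that the extremum over each compact completed fiber is attained in $M$. Concretely, I use $u_\varepsilon(s)=\max_{\text{fiber}}(f+\varepsilon\varphi)$, which is attained because $f$ is bounded above, $\varphi\to-\infty$ at $E$, and the fiber is compact.)

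The key local step is a maximum principle along fibers. At a point $p$ where the fiber maximum is attained, $f_\varepsilon$ restricted to the fiber has nonpositive complex Hessian. In transverse directions, the positive Levi direction of $f$ together with $\ii\ddbar\varphi\ge0$ should force $u_\varepsilon$ to be subharmonic on $B$. More precisely, I intend to show that $u_\varepsilon$ satisfies the sub-mean value property. Take the positive direction $v$ at $p$. If $v$ is tangent to the fiber, then $p$ cannot be a fiber maximum, which is a contradiction. Otherwise $v$ projects to a nonzero vector in $T_sB$, and then a holomorphic disc through $p$ in direction $v$, pushed to $B$, shows that $u_\varepsilon$ is strictly subharmonic in the viscosity sense at $s$.

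In the case where $B$ is a point, the fiber is $X$ itself. The maximum is then attained at an interior point, where $\Lev f_\varepsilon\le0$ in every direction. This contradicts the existence of a positive Levi direction, since $\Lev\varphi\ge0$.

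For $B=\R$ or a parabolic Riemann surface, $u_\varepsilon$ is bounded above by $C+\varepsilon\sup\varphi$ (after normalizing $\varphi\le0$) and is subharmonic. On a parabolic surface it is therefore constant. For $B=\R$ I use convexity instead, with the complex structure supplying $\R\times i\R$-type directions, or else I treat $\R$ as a degenerate base. But constancy contradicts strict subharmonicity at any point.

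The main obstacle is that $u_\varepsilon$ need not be smooth. Moreover, strict subharmonicity may degenerate because of the $\varepsilon\varphi$ term. I will handle this with the viscosity formulation: a $C^2$ function touching $u_\varepsilon$ from above at $s$ lifts to a lower test function for $f_\varepsilon$ at $p$.

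The hypothesis on $s_0$ is used to pass to $\varepsilon\to0$. The fiber over $s_0$ lies entirely in $M$, so $\varphi$ is bounded there. Hence $u_\varepsilon(s_0)\to\max_{\bar\pi^{-1}(s_0)} f$, which normalizes the family and prevents it from drifting to $-\infty$. If strictness is lost as $\varepsilon\to0$, I will take the limit $u=\lim u_\varepsilon$, which is a bounded-above subharmonic function and hence constant. Its maximum is then realized on the fiber over $s_0$ at a point of $M$. There $f$ attains a maximum along the fiber and transversally, and this contradicts the positive Levi direction at that point.
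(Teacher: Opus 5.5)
Your overall architecture is sound: fiber maxima of $f+\varepsilon\varphi$, a limit as $\varepsilon\downarrow0$, and an intact fiber forcing $\sup_Mf$ to be attained in $M$, which contradicts the positive Levi direction. The key local step, however, has a genuine gap.

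You use $\ii\ddbar\varphi\geq0$ at the maximizer. Definition~\ref{def:completion} makes the potential plurisubharmonic only on $N\cap M$ for a neighbourhood $N$ of $E$ (point base), or on $N\cap U$ for an open $N$ containing the pole set and the critical points (other bases). On $X\setminus N$ the Levi form of the potential may be negative, and this region cannot be avoided. For a point base, a nonpositive function plurisubharmonic on all of the connected manifold $M$ would attain its maximum, since its upper levels are compact. It would then be a finite constant, contradicting $\varphi=-\infty$ on $E\cap\overline M\neq\varnothing$. In Example~\ref{ex:partial} the fiber potential $\chi$ is likewise not subharmonic on all of $\C$. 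So your point-base argument is vacuous as written.

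In the fibered case, fix $\varepsilon$ and take a maximizer in $T=X\setminus N$ far out over $B$. It may satisfy $\varepsilon\Lev\psi(v,\bar v)\leq-\Lev f(v,\bar v)$ with $v$ tangent to the fiber, while $\Lev f$ is very negative horizontally. Then no useful lower bound on $Dq(s)$ results. Thus $u_\varepsilon$ need not be subharmonic on $B$ for any fixed $\varepsilon$, and the step ``constancy contradicts strict subharmonicity'' is unavailable.

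The repair uses the compactness of the bad region, which your proposal never invokes.

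For a point base, your disc argument shows that every maximizer of $f+\varepsilon\varphi$ lies in the compact set $X\setminus N\subset M$. On $N\cap M$ you need the sub-mean value property there, since $\varphi$ is only upper semicontinuous. Hence $f(x)+\varepsilon\varphi(x)\leq\max_{X\setminus N}f$. Letting $\varepsilon\downarrow0$ at finite points and using density gives $\sup_Mf=\max_{X\setminus N}f$, an interior global maximum, which is the contradiction you want.

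For a nonconstant base, $T\cap\pi^{-1}(\overline D)$ is compact for compact $D\subset B$ by Lemma~\ref{lem:compact-levels}. There the top eigenvalue of $\Lev f$ for a background metric is at least some $\delta_D>0$, while $|\Lev\psi|\leq K_D$. So for $\varepsilon<\delta_D/K_D$ your directional argument gives $Dq(s)>0$ for every upper test at $s\in D$. The paper instead chooses $\alpha$ with $L_\alpha f>0$ by Lemma~\ref{lem:trace-selection} and gets $Du_\varepsilon\geq-\varepsilon A_D$ from the trace; that is, it runs Lemmas~\ref{lem:penalized} and~\ref{lem:relative-constant} for the supersolution $C+1-f$.

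Since your threshold depends on $D$, you must pass to the upper relaxed limit of the $u_{\varepsilon_j}$; the increasing pointwise limit need not be upper semicontinuous. That limit is subharmonic (convex on $\R$) and bounded above, hence constant, and then your intact fiber argument goes through.

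Once repaired, your route is valid and a bit more direct than the paper's. It avoids Lemma~\ref{lem:trace-selection}, and in the point case it also avoids the viscosity weak maximum principle used in Lemma~\ref{lem:point}.
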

For a point base, the completed space is compact. For either nonconstant
base, only the completed fibers must be compact. The completed space need
not carry a complex structure.

Section~\ref{sec:preliminaries} recalls the elliptic and potential theoretic
facts used below. Section~\ref{sec:existence} proves existence through the
adjoint density equation. Section~\ref{sec:levi} proves the Levi criterion and its assertion
about Gauduchon metrics on manifolds with at least two ends, and gives
the Stein construction.
Section~\ref{sec:lie} proves the Lie group criterion.
Section~\ref{sec:ends} treats compactifications, changes on compact sets,
and their geometric applications. Section~\ref{sec:harmonic} compares
uniqueness of Gauduchon metrics with uniqueness of positive solutions
of the complex Laplace equation and completes the proofs of
Theorems~\ref{thm:levi} and~\ref{thm:lie}.

\section{Preliminaries}\label{sec:preliminaries}
We recall the analytic results which enter several proofs. All ellipticity
assumptions are local; no uniform bound at infinity is imposed.

\begin{definition}\label{def:elliptic}
Let $Q$ be a real second order operator on a connected noncompact smooth
manifold. In local real coordinates it has the form
\[
Q=-\sum_{a,b=1}^m a^{ab}(x)\partial_{x_a}\partial_{x_b}
  +\sum_{a=1}^m b^a(x)\partial_{x_a}+c(x),
\]
with smooth coefficients and positive definite principal matrix
$(a^{ab})$. Here $\partial_{x_a}$ denotes differentiation with respect to $x_a$.
A positive solution satisfies $u>0$ and $Qu=0$;
a positive supersolution satisfies $u>0$ and $Qu\geq0$.
The formal adjoint $Q^*$ relative to a smooth positive density $d\mu$
is characterized by
\begin{equation}\label{eq:formal-adjoint}
\int f(Qg)\,d\mu=\int(Q^*f)g\,d\mu
\end{equation}
for smooth compactly supported real functions $f,g$.
A positive minimal Green function is the least positive fundamental
solution with a given pole, obtained by exhaustion with Dirichlet Green
functions. An operator which has a positive solution is \emph{subcritical}
if this Green function exists, and \emph{critical} otherwise.
\end{definition}
These are the conventions of \cite[Definition~2.6 and Corollary~2.8]{Pinchover}.

\begin{proposition}[Pinchover {\cite{Pinchover}}]\label{prop:pinchover}
If $Q$ admits a positive solution, then it is either critical or
subcritical. It is critical exactly when its positive supersolutions
form one ray, meaning that any two are positive constant multiples of
each other. In that case its positive solutions form one ray as well.
The operators $Q$ and $Q^*$ are either both critical or both subcritical. In particular,
if $Q$ has a positive solution, then so does $Q^*$.
\end{proposition}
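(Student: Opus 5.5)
The plan is to reproduce the core of Pinchover's criticality theory using only the existence of one positive solution $u_0$ of $Qu_0=0$, local elliptic theory, and the Harnack inequality. The first step is the ground-state transform $Q_{u_0}f=u_0^{-1}Q(u_0f)$. This operator has no zero-order term and annihilates constants. So on every relatively compact smooth domain $\Omega$ the weak and strong maximum principles hold for $Q$ in the form "$Qw\ge0$ in $\Omega$, $w\ge0$ on $\partial\Omega$ implies $w\ge0$". Consequently the Dirichlet problems for $Q$ and for $Q^*$ are uniquely solvable on $\Omega$, and there are positive Dirichlet Green functions $G_\Omega(x,y)$.

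Next I fix a smooth exhaustion $\Omega_1\Subset\Omega_2\Subset\cdots$. The maximum principle makes $G_{\Omega_k}(x,y)$ increasing in $k$. By the Harnack inequality away from the pole, either the sequence is bounded at one pair $(x,y)$, $x\ne y$, and then converges locally uniformly off the diagonal to the minimal Green function $G$; or it diverges everywhere off the diagonal. This is the critical/subcritical dichotomy. For the adjoint, Green's identity \eqref{eq:formal-adjoint} on $\Omega_k$ gives $G^*_{\Omega_k}(x,y)=G_{\Omega_k}(y,x)$. Hence the two sequences converge or diverge together, and $Q$ and $Q^*$ are simultaneously critical or subcritical. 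In the critical case I fix $x_0$ and a sequence of poles $y_k\to\infty$. The normalized functions $G_{\Omega_k}(\cdot,y_k)/G_{\Omega_k}(x_0,y_k)$ are then locally bounded by Harnack, and a subsequence converges to a positive solution. The same construction applied to $Q^*$ produces a positive solution of $Q^*$. In the subcritical case the function $G^*(\cdot,y)$ is a positive solution of $Q^*$ off $y$, and I would take the standard limit $y\to\infty$ after normalization. Either way $Q^*$ has a positive solution.

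For the ray characterization, suppose first that $Q$ is subcritical. Then $G(\cdot,y)$ and $G(\cdot,y)+u_0$ are non-proportional positive supersolutions, where $G(\cdot,y)$ is a supersolution in the weak sense and can be smoothed by convolution with a bump in $y$. Conversely, suppose $Q$ is critical. If $v>0$ satisfies $Qv=f\ge0$ with $f\not\equiv0$, I choose $0\le g\le f$ compactly supported and nonzero. The maximum principle gives
\[
\int_{\Omega_k}G_{\Omega_k}(x,y)g(y)\,d\mu(y)\le v(x)
\]
for every $k$. This bounds $G_{\Omega_k}$ and forces subcriticality, a contradiction, so every positive supersolution is a solution. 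Now let $v,\varphi$ be two positive solutions and put $t=\sup\{s:v-s\varphi\ge0\}$. If $v-t\varphi\not\equiv0$, then by the strong maximum principle it is a positive solution, and $\min(v,\varphi)$ is a positive supersolution in the weak sense which is not a solution unless $v$ and $\varphi$ are proportional. Weak supersolutions must be handled in the same Green-function estimate, and I would do this by mollification or by working with the generalized maximum principle directly. This gives the one-ray property for supersolutions, and in particular for solutions.

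I expect the main obstacle to be the regularity bookkeeping: using $\min(v,\varphi)$ and $G(\cdot,y)$ as supersolutions when they are not smooth. I would first try to phrase every comparison through Dirichlet problems on $\Omega_k$, which need only continuous boundary data, so that no weak-derivative calculus is required.
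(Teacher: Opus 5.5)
The paper does not prove this proposition; it quotes Pinchover (Definition~2.6 and Corollary~2.8). Most of your reconstruction is sound:
\begin{itemize}
\item the generalized maximum principle obtained from the ground-state transform;
\item the monotone Dirichlet Green functions and the Harnack dichotomy;
\item the symmetry $G^*_{\Omega_k}(x,y)=G_{\Omega_k}(y,x)$;
\item the positive adjoint solution obtained from poles escaping to infinity;
\item the subcritical direction (integrating $G(\cdot,y)$ against a bump in the \emph{pole} variable is legitimate, since it gives a smooth $w$ with $Qw=\phi$);
\item the Green-potential argument showing that in the critical case every \emph{smooth} positive supersolution is a solution.
\end{itemize}

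The gap is the proportionality of two positive solutions $v,\varphi$. As written, the claim that $\min(v,\varphi)$ is not a solution unless $v,\varphi$ are proportional is false: if $v\le\varphi$, then $\min(v,\varphi)=v$. The strong maximum principle only shows that if a minimum of two solutions is itself a solution, the two solutions are ordered. So you would need $\min(v,s\varphi)$ with $s$ slightly above $t$, where $v-s\varphi$ changes sign. Your $t$ points this way, but the argument never uses it.

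More seriously, that minimum is only Lipschitz, while your criticality step was proved only for smooth supersolutions. Mollification does not repair this. For variable coefficients, convolution in $x$ does not commute with $Q$, so a mollified supersolution need not be a supersolution. Extending the Green-potential bound to supersolutions with measure-valued $Q$ is possible, but that is exactly the work left undone.

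The regularity problem disappears if you replace the minimum by a smooth concave combination. Put $f=v/\varphi$ and $Q_\varphi g=\varphi^{-1}Q(\varphi g)$. This operator has no zeroth order term, and $Q_\varphi f=\varphi^{-1}Qv=0$. For smooth $F$,
\[
Q\bigl(\varphi F(f)\bigr)=\varphi F'(f)\,Q_\varphi f-\varphi F''(f)\sum_{a,b}a^{ab}\partial_{x_a}f\,\partial_{x_b}f .
\]
Taking $F(\tau)=\tau/(1+\tau)$ gives the smooth positive function $w=v\varphi/(v+\varphi)$ with
\[
Qw=\frac{2\varphi}{(1+f)^3}\sum_{a,b}a^{ab}\partial_{x_a}f\,\partial_{x_b}f\geq0 .
\]
In the critical case your smooth criterion forces $Qw\equiv0$. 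Positive definiteness of $(a^{ab})$ then gives $df\equiv0$, so $v$ is a constant multiple of $\varphi$. Combined with your supersolution step, this gives the one-ray property without any weak supersolutions.

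An alternative that also stays within smooth functions is the minimal-growth route. Show that the normalized limit of $G_{\Omega_k}(\cdot,y)/G_{\Omega_k}(x_0,y)$ has minimal growth at infinity. Then compare any positive supersolution with a maximal multiple of it touching from below on a compact ball, and conclude by the strong minimum principle.
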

The statements are \cite[Definition~2.6 and Corollary~2.8(i),(ii),(iv)]{Pinchover}.
For positive solutions, \emph{minimal growth} means that a positive
solution which dominates the function on an inner comparison boundary
also dominates it throughout the exterior; see
\cite[Definition~2.4]{Pinchover}. The construction in
\cite[Theorem~2.5]{Pinchover} gives, in the subcritical case, smooth
nonnegative Green potentials of smooth nonnegative compactly supported
sources. Such a potential has minimal growth outside a compact set
containing its source.
Since $P_\alpha1=0$, Proposition~\ref{prop:pinchover} says that
$P_\alpha$ is critical exactly when all its positive smooth
supersolutions are constant.

We use the weak and strong maximum principles for smooth locally
elliptic operators \cite[Theorems~3.1 and 3.5]{GT}. In particular, a
positive smooth supersolution for $P_\alpha$ which attains its global
minimum is constant. Local elliptic regularity gives smoothness of the
positive adjoint solutions \cite[Chapter~7, \S4]{TaylorPDE}.

A function $\psi$ with values in $[-\infty,\infty)$ is
\emph{plurisubharmonic} if it is upper semicontinuous, is not identically
$-\infty$ on any component, and its restriction to every holomorphic
disc is subharmonic or identically $-\infty$. For a smooth function this
means $\ii\ddbar\psi\geq0$. We use the usual maximum, truncation, and
decreasing limit properties \cite[\S2.9]{Klimek}. A real smooth function
is \emph{pluriharmonic} if it is locally the real part of a holomorphic
function. A closed set is locally pluripolar if it is locally contained
in the pole set of a plurisubharmonic function. Locally bounded
plurisubharmonic functions extend across such a set
\cite[Theorem~1.8.6]{Forstneric}.

A connected noncompact Riemann surface is \emph{parabolic} if its
Laplacian has no positive Green function. Every nonnegative finite
superharmonic function on a parabolic surface is constant
\cite[Chapter~IV, \S1, Theorem~6C]{AS}. This includes $\C$ by the
subharmonic Liouville theorem \cite[Corollary~2.3.4]{Ransford}.
A map is \emph{proper} if inverse images of compact sets are compact.
For a continuous cost on an open surjection, compactness of its sublevels
over compact base sets implies attainment and continuity of the fiber
minimum \cite[Theorem~1.2]{FKV}.

Some fiber minima are only lower semicontinuous. We use the viscosity
interpretation: a smooth function $q$ touches $v$ from below at $s$ if
$q(s)=v(s)$ and $q\leq v$ nearby. The inequality $v_{z\bar z}\leq A$
means $q_{z\bar z}(s)\leq A(s)$ for all such tests. Upper tests define
the corresponding inequalities for upper semicontinuous functions.
For smooth linear elliptic operators these inequalities agree with the
distributional ones \cite[Theorems~9.2 and 9.3]{HL}.
If $v_j$ are locally bounded continuous functions with
$(v_j)_{z\bar z}\leq\varepsilon_j A$ locally, $\varepsilon_j\to0$, and
$A$ locally bounded, their finite lower relaxed limit
\begin{equation}\label{eq:relaxed}
V(s)=\liminf_{j\to\infty,\ t\to s}v_j(t)
\end{equation}
is superharmonic \cite[Lemma~6.1 and Remarks~6.2--6.3]{CIL}.
The analogous assertion on $\R$ gives $V''\leq0$, hence concavity.
Every nonnegative concave function on the whole real line is constant:
a nonzero secant slope forces negative values in one direction.

We need one further result about positive solutions on the ends.
Let $D$ be a smooth relatively compact domain, and set
$E=M\setminus\overline D$. For a subcritical $Q$, let $H_E f$ be the
minimal exterior Dirichlet solution with inner boundary data $f$,
obtained by taking zero outer data on a compact exhaustion. Write
$\mathcal C_Q(M)$ for the positive solution cone and put
\[
\mathcal D_Q(E)=\{h\geq0:Qh=0\text{ on }E,\ h|_{\partial D}=0\}
\setminus\{0\}.
\]
Zero trace means continuous zero boundary values; a function in this
cone may vanish on some exterior components.
\begin{proposition}[Taylor {\cite{TaylorMartin}}]\label{prop:taylor}
For subcritical $Q$, the map
\[
\mathcal C_Q(M)\longrightarrow\mathcal D_Q(E),\qquad
u\longmapsto u|_E-H_E(u|_{\partial D})
\]
is a positive homogeneous bijection. If $M$ has at least two ends,
$\mathcal C_Q(M)$ contains two nonproportional positive solutions.
\end{proposition}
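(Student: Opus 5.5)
The plan is to treat the map $T\colon u\mapsto u|_E-H_E(u|_{\partial D})$ by exhaustion and the maximum principle, with the representation $G_\Omega$ of the Dirichlet Green functions as the main tool. Fix a compact exhaustion $\overline D\subset\Omega_1\Subset\Omega_2\Subset\cdots$ by smooth domains, and write $E_k=\Omega_k\setminus\overline D$. For boundary data $f$ on $\partial D$, let $H_k f$ solve $Qw=0$ in $E_k$ with $w=f$ on $\partial D$ and $w=0$ on $\partial\Omega_k$. Then $H_Ef=\lim_k H_kf$. This limit is increasing by the weak maximum principle \cite[Theorem~3.1]{GT}, and it is finite because it is dominated by any positive solution that dominates $f$ on $\partial D$.

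\textbf{Well-definedness and homogeneity.} Take $u\in\mathcal C_Q(M)$. On each $E_k$ we have $u\ge H_k(u|_{\partial D})$ by comparison, so $h=Tu\ge0$. Moreover $Qh=0$ on $E$, $h$ has zero trace on $\partial D$, and $T$ is linear, hence positively homogeneous. To get $h\not\equiv0$, I use subcriticality. If $u=H_E(u|_{\partial D})$, then $u$ is a positive solution of minimal growth at infinity in the sense of \cite[Definition~2.4]{Pinchover}, that is, a ground state. By \cite[Theorem~2.5 and Corollary~2.8]{Pinchover}, ground states exist only in the critical case. This is a contradiction, so $h\ne0$.

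\textbf{Injectivity.} Suppose $Tu_1=Tu_2$ and set $w=u_1-u_2$. Then $Qw=0$ on $M$ and $w|_E=H_E(w|_{\partial D})$, which by linearity is a difference of two minimal-growth functions. Fix a pole $x_0\in D$ and let $G$ be the minimal Green function. Near infinity, $H_E(\pm w^{\pm}|_{\partial D})\le CG(\cdot,x_0)$, so $|w|\le CG$ on $E$. Since $G$ has minimal growth, the function $\varepsilon G\mp w$ is nonnegative on $\partial\Omega_k$ for large $k$. The weak maximum principle on $\Omega_k\setminus\{x_0\}$, where the pole of $G$ only helps, gives $|w|\le\varepsilon G$ for every $\varepsilon>0$. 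Hence $w=0$.

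\textbf{Surjectivity.} This is the step I expect to be the main obstacle. Given $h\in\mathcal D_Q(E)$, extend it by $0$ to $D$ and call the result $\tilde h$. Because $h\ge0$ vanishes on $\partial D$, its inner normal derivative is nonnegative. Distributionally, $Q\tilde h=-\mu$, where $\mu\ge0$ is a smooth density on $\partial D$ (a single-layer term). I then set
\[
u=\tilde h+G\mu,\qquad G\mu(x)=\int_{\partial D}G(x,y)\,d\mu(y),
\]
which is finite by subcriticality. This $u$ satisfies $Qu=0$ on $M$, is smooth by elliptic regularity \cite[Chapter~7, \S4]{TaylorPDE}, and is positive by the strong maximum principle. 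The remaining point is to check that $Tu=h$. On $E$ we have $u-h=G\mu$, and $G\mu$ has minimal growth with boundary values $G\mu|_{\partial D}=u|_{\partial D}$. So it suffices to show $G\mu=H_E(G\mu|_{\partial D})$ on $E$, i.e.\ that a Green potential of a source supported in $\overline D$ is the minimal exterior solution with its own boundary data. I would prove this by taking Dirichlet Green functions $G_k$ on $\Omega_k$ and noting that $G_k\mu=H_k(G_k\mu|_{\partial D})$ on $E_k$, since both vanish on $\partial\Omega_k$. Then pass to the increasing limit, using that $H_k f$ depends monotonically and continuously on $f$.

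\textbf{Two ends.} Suppose a compact set separates two components with noncompact closures. Enlarge $D$ so that $E$ has at least two unbounded components $E'$ and $E''$. On $E'$, define $h'$ as the normalized limit of solutions in $E'\cap\Omega_k$ with data $0$ on $\partial D$ and $1$ on $\partial\Omega_k$, scaled to equal $1$ at a fixed point. By Harnack and the boundary regularity at $\partial D$, $h'$ is a positive solution on $E'$ with zero trace. Extend it by $0$ to the other components, so $h'\in\mathcal D_Q(E)$. Do the same on $E''$ to get $h''$. Then $h'$ and $h''$ have disjoint supports and are not proportional. Since $T$ is linear and bijective, their preimages are two nonproportional elements of $\mathcal C_Q(M)$.
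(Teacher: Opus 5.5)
Your arguments for well-definedness, for $Tu\ne0$ via the ground-state characterization, for surjectivity through $u=\tilde h+G\mu$ together with $G\mu=H_E(G\mu|_{\partial D})$, and for the two-ends claim are sound. They form a more elementary route than the paper's, which only cites Taylor's Martin-boundary results: the extension operator and inverse identity, and the decomposition of the Martin boundary by ends with a minimal point in each component. Note that the two-ends claim uses only surjectivity and positive homogeneity of $T$, so it survives the problem below. When $Q$ has a zeroth order term of indefinite sign, as $P_\alpha^*$ does, your comparison steps need the generalized maximum principle, for instance after dividing by a positive solution, rather than the classical weak maximum principle.

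The gap is in injectivity. The step ``since $G$ has minimal growth, $\varepsilon G\mp w\ge0$ on $\partial\Omega_k$ for large $k$'' does not follow. Minimal growth of $G$ means $G=H_E(G|_{\partial D})$ on $E$. So the exterior function $G$ itself has every property you have established for $w$ on $E$: it is $H_E$ of its trace and is at most $CG$. Yet $G/G\equiv1$ does not tend to zero. Decay of $w/G$ at infinity is essentially the conclusion. The fact that excludes such behaviour is that $w$ continues as a bounded solution through $D$, where $G$ has its pole. That fact enters your argument only after the boundary inequality has already been assumed.

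You can repair this with your own ingredients by replacing $\varepsilon\downarrow0$ with an optimal constant. Let $C^*=\inf\{C:CG\ge w\text{ on }M\setminus\{x_0\}\}$.
\begin{itemize}
\item The set is nonempty, because $|w|\le CG$ on $E$ and $G$ is bounded below on $\overline D\setminus\{x_0\}$.
\item $C^*\ge0$, because $G\to\infty$ at $x_0$ while $w$ is bounded there.
\item If $C^*>0$, then $\psi=C^*G-w\ge0$ solves $Q\psi=0$ off $x_0$ and blows up at $x_0$. Hence $\psi>0$ on $M\setminus\{x_0\}$ and $\psi/G\to C^*$ at $x_0$.
\item So $\psi\ge mG$ on $\overline D\setminus\{x_0\}$ for some $m>0$. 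On $E$, monotonicity gives $\psi=H_E(\psi|_{\partial D})\ge mH_E(G|_{\partial D})=mG$.
\item Then $(C^*-m)G\ge w$, contradicting the choice of $C^*$.
\end{itemize}
Hence $w\le0$, and applying the same argument to $-w$ gives $w=0$.

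Alternatively, show that every preimage of $h$ equals $\tilde h+G\mu$. If $Tu=h$, then $v=u-\tilde h>0$ satisfies $Qv=\mu$ and $v|_E=H_E(v|_{\partial D})$. Comparison on $\Omega_k$ gives $v\ge G_k\mu$. So $s=v-G\mu\ge0$ is a global solution with $s|_E=H_E(s|_{\partial D})$. If $s\ne0$, it would be a ground state, which is impossible for subcritical $Q$.
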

The extension operator and its kernel are described in
\cite[\S5, Propositions~4 and 5]{TaylorMartin}; the inverse identity is
\cite[\S7, Lemma~3]{TaylorMartin}. The decomposition by ends and the
existence of minimal points in each boundary component follow from
\cite[\S8, Theorem~6 and Corollary~3]{TaylorMartin}. If $Q$ has a
zeroth order term, divide by a positive solution first, as in
\cite[\S7, Remark~1]{TaylorMartin}.

\section{The density equation and existence}\label{sec:existence}
The conformal equation is linear in the $(n-1)$st power of the conformal
factor. Its coordinate form identifies it with an adjoint elliptic
equation, which proves Theorem~\ref{thm:existence}.

In a holomorphic chart $z=(z_1,\ldots,z_n)$ write
\[
\alpha=\ii\sum_{j,k=1}^n g_{j\bar k}\,dz_j\wedge d\bar z_k,
\qquad G=(g_{j\bar k}).
\]
The matrix $G$ is smooth positive Hermitian. For a matrix field $A$ put
\begin{equation}\label{eq:divergence}
\mathcal D A=\sum_{j,k=1}^n\partial_{z_j}\partial_{\bar z_k}A_{kj}.
\end{equation}
The adjugate is $\adj G=(\det G)G^{-1}$. Expansion of the exterior
power, followed by $\partial\bar\partial$, gives
\begin{equation}\label{eq:density}
\ddbar(\rho\alpha^{n-1})=0
\quad\Longleftrightarrow\quad \mathcal D(\rho\adj G)=0.
\end{equation}
The cofactor index in \eqref{eq:density} is $kj$; see
\cite[\S8, equations~(19) and (23)]{GauduchonRicci}.
The trace in \eqref{eq:trace-intro} is
\begin{equation}\label{eq:chern}
L_\alpha f=\frac{n\ii\ddbar f\wedge\alpha^{n-1}}{\alpha^n}
=\sum_{j,k}(G^{-1})_{kj}f_{z_j\bar z_k}.
\end{equation}
The quotient of top degree forms means their scalar ratio. Relative to
$dV_\alpha=(2^n n!)^{-1}\alpha^n$, whose real coordinate density is
$\det G$, integration by parts gives
\begin{equation}\label{eq:adjoint}
P_\alpha^*\rho=-(\det G)^{-1}\mathcal D(\rho\adj G).
\end{equation}
All test functions have compact support, so this formula imposes no
boundary condition at infinity.

\begin{proposition}\label{prop:rays}
A positive smooth function $\rho$ satisfies $P_\alpha^*\rho=0$ exactly
when $\rho^{1/(n-1)}\alpha$ is Gauduchon. Uniqueness up to scaling of
Gauduchon metrics in the conformal class of $\alpha$ is equivalent to
proportionality of all positive solutions of the adjoint equation.
\end{proposition}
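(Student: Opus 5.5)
The proof combines \eqref{eq:density} with \eqref{eq:adjoint}, and then translates between positive adjoint solutions and conformal factors.

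\emph{First assertion.} Take a positive smooth $\rho$. By \eqref{eq:adjoint}, $P_\alpha^*\rho=0$ holds exactly when $\mathcal D(\rho\adj G)=0$ in every holomorphic chart, because $\det G>0$. By \eqref{eq:density}, this is the same as $\ddbar(\rho\alpha^{n-1})=0$. Put $\omega=\rho^{1/(n-1)}\alpha$; it is a smooth positive conformal multiple of $\alpha$. Since the power is taken in the exterior algebra, $\omega^{n-1}=\rho\,\alpha^{n-1}$, so the condition reads $\ddbar\omega^{n-1}=0$, which is \eqref{eq:gauduchon}. Here $n\geq2$ makes the root $\rho^{1/(n-1)}$ meaningful.

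Two points need care. The first is that the chart-local statement \eqref{eq:density} globalizes. Both sides are local conditions: vanishing of a global form on one side, and vanishing of the coordinate-invariant scalar $P_\alpha^*\rho$ defined by \eqref{eq:formal-adjoint} on the other. Checking chart by chart is therefore enough. The second is the index convention $A_{kj}$ in $\mathcal D$, which has to match the adjugate placement used in \eqref{eq:density}. I would accept both \eqref{eq:density} and \eqref{eq:adjoint} as stated. If I checked anything, it would be \eqref{eq:adjoint}: integrate $\int f\,P_\alpha g\,\det G\,d\lambda=-\int f\sum(\adj G)_{kj}g_{z_j\bar z_k}\,d\lambda$ by parts twice, which moves both derivatives onto $f\adj G$ with compact support and produces $-\mathcal D(f\adj G)$. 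This computation is the only place where a slip could occur, so it is the step I regard as the main obstacle, although it is routine.

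\emph{Second assertion.} Every metric in $[\alpha]$ has the form $\omega=\phi\alpha$ with $\phi>0$ smooth. Writing $\rho=\phi^{n-1}$ gives a bijection $\phi\leftrightarrow\rho$ between positive smooth functions. By the first part, $\omega$ is Gauduchon exactly when $\rho$ is a positive solution of $P_\alpha^*\rho=0$.

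Two Gauduchon metrics $\phi_1\alpha$ and $\phi_2\alpha$ differ by a positive constant $c$ exactly when $\rho_1=c^{n-1}\rho_2$. So proportionality of the metrics is equivalent to proportionality of the corresponding adjoint solutions, with positive constants corresponding to positive constants.

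Hence uniqueness up to scaling, meaning any two Gauduchon metrics in $[\alpha]$ are proportional, is equivalent to all positive solutions of $P_\alpha^*\rho=0$ being proportional. Theorem~\ref{thm:existence} guarantees the set is nonempty, but the equivalence does not even need that.
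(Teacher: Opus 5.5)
Your proof is correct and follows the paper's argument. The paper also combines \eqref{eq:density} with \eqref{eq:adjoint}, using the homogeneity $\adj(cG)=c^{n-1}\adj G$, which is the matrix form of your identity $\omega^{n-1}=\rho\,\alpha^{n-1}$. It then matches constant metric ratios with proportional densities through the $(n-1)$st power, exactly as you do.
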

\begin{proof}
For $c>0$, $\adj(cG)=c^{n-1}\adj G$. Equations
\eqref{eq:density} and \eqref{eq:adjoint} give the first assertion.
Proportional densities give metric factors differing by a constant.
Conversely, taking the $(n-1)$st power of a constant metric ratio gives
proportional densities.
\end{proof}

\begin{proof}[Proof of Theorem~\ref{thm:existence}]
The identity $P_\alpha1=0$ and Proposition~\ref{prop:pinchover} give
a positive solution $v$ of $P_\alpha^*v=0$. Local elliptic regularity
makes $v$ smooth. Set $\rho=v/v(x_0)$. The equation is linear and
$v(x_0)>0$, so \eqref{eq:existence} follows from
\eqref{eq:adjoint}. Finally
$(\rho^{1/(n-1)}\alpha)^{n-1}=\rho\alpha^{n-1}$.
\end{proof}

We will also construct metrics by prescribing their $(n-1,n-1)$-forms.
For holomorphic frames $e$ and $f$, let $J$ be the change of frame matrix:
its columns are the vectors of $e$ expressed in $f$. Subscripts $e$ and $f$
indicate coefficient matrices in these frames. A superscript $T$ denotes
transpose, and a bar on a matrix denotes entrywise complex conjugation.
In cofactor coordinates, if the metric law is
$G_e=J^TG_f\bar J$, the corresponding law is
\begin{equation}\label{eq:cofactor-law}
A_e=\adj(\bar J)A_f\adj(J^T).
\end{equation}
Such matrix fields represent real $(n-1,n-1)$-forms, up to a fixed
nonzero dimensional factor. In this representation, the form is
annihilated by $\ddbar$ exactly when $\mathcal D A=0$.
\begin{lemma}\label{lem:root}
A smooth positive Hermitian cofactor field satisfying
\eqref{eq:cofactor-law} determines a smooth Hermitian metric with
coefficient matrix
\begin{equation}\label{eq:root}
G_A=(\det A)^{1/(n-1)}A^{-1},\qquad \adj G_A=A.
\end{equation}
\end{lemma}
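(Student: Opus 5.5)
Two things have to be checked. First, that $G_A$ is a well-defined positive Hermitian matrix with $\adj G_A=A$. Second, that the local matrices $G_A$ obey the metric transformation law $G_e=J^TG_f\bar J$, so that they glue to a global smooth Hermitian metric.

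For the algebraic part I work pointwise in one frame. Since $A$ is positive Hermitian, $\det A>0$, so $(\det A)^{1/(n-1)}$ is a smooth positive function and $A^{-1}$ is smooth positive Hermitian. Hence $G_A$ is smooth positive Hermitian. In dimension $n$ we have $\det(cB)=c^n\det B$, so
\[
\det G_A=(\det A)^{n/(n-1)}(\det A)^{-1}=(\det A)^{1/(n-1)}.
\]
Therefore
\[
\adj G_A=(\det G_A)G_A^{-1}=(\det A)^{1/(n-1)}(\det A)^{-1/(n-1)}A=A .
\]
This also shows that $G$ is determined by $\adj G$ whenever $n\ge2$. The reason is that $\det\adj G=(\det G)^{n-1}$, so $\det G=(\det A)^{1/(n-1)}$ with the positive root forced, and then $G=(\det G)\,A^{-1}$. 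Uniqueness is what makes the next step work.

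For the transformation law I substitute \eqref{eq:cofactor-law} into the formula. Let $J$ be the holomorphic change-of-frame matrix, which is invertible. Put $\tilde G=J^TG_{A_f}\bar J$; it is positive Hermitian. The adjugate is multiplicative, $\adj(XY)=\adj(Y)\adj(X)$, which holds for invertible matrices by the inverse formula. So
\[
\adj\tilde G=\adj(\bar J)\,\adj(G_{A_f})\,\adj(J^T)=\adj(\bar J)A_f\adj(J^T)=A_e .
\]
By the uniqueness just noted, a positive Hermitian matrix with adjugate $A_e$ must equal $G_{A_e}$. Hence $G_{A_e}=J^TG_{A_f}\bar J$, which is exactly the metric law. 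The local coefficient matrices therefore define one global smooth positive $(1,1)$-form $\alpha$.

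As a consistency check, $\adj G_\alpha=A$ in every frame, so by \eqref{eq:density} and the normalization fixed before the lemma, $\alpha^{n-1}$ is the $(n-1,n-1)$-form represented by $A$, up to that fixed dimensional factor.

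The only step needing care is the uniqueness of the positive-definite "adjugate root", together with the convention that the cofactor law is the adjugate of the metric law. The uniqueness rests on $n\ge2$ and on choosing the positive $(n-1)$st root. Everything else is direct algebra.
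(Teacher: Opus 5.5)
Your proposal is correct and follows the paper's proof. You compute $\det G_A=(\det A)^{1/(n-1)}$ to get $\adj G_A=A$, then check the metric transformation law against \eqref{eq:cofactor-law}; your route through $\adj(XY)=\adj(Y)\adj(X)$ and uniqueness of the positive adjugate root (valid since $n\geq2$) is simply a clean way of carrying out the paper's ``substitution'' step, which can equally be done directly from $\det A_e=|\det J|^{2(n-1)}\det A_f$.
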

\begin{proof}
Put $d=(\det A)^{1/(n-1)}$. Then $\det G_A=d$ and
$G_A^{-1}=d^{-1}A$, which proves the adjugate identity. Positivity and
smoothness follow from those of $A$ and its inverse. Substitution in
\eqref{eq:cofactor-law} gives the metric transformation law.
This is the coordinate version of Michelsohn's positive root
correspondence \cite[pp.~279--280, equations~(4.8)--(4.11)]{Michelsohn}.
\end{proof}
If $\mathcal D A=\mathcal D(hA)=0$ for a positive nonconstant smooth
function $h$, Lemma~\ref{lem:root} and \eqref{eq:density} give the two Gauduchon
metrics $G_A$ and $h^{1/(n-1)}G_A$, which are not related by scaling.

\section{The Levi criterion}\label{sec:levi}
A subcritical trace operator produces bounded Green potentials.
A convergent positive sum of them has strictly positive source
everywhere. This gives the bounded function needed in the Levi criterion.

\begin{lemma}\label{lem:green-bound}
For a subcritical $P_\alpha$, the minimal Green potential of a smooth
nonnegative compactly supported source is globally bounded.
\end{lemma}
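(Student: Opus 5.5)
The key observation is that $P_\alpha$ has no zeroth order term and $P_\alpha1=0$, so constants are solutions and the maximum principle can compare the Green potential with constants. Let $f\geq0$ be smooth with compact support in a smooth relatively compact domain $D$, and let $w$ be its minimal Green potential. By the construction of \cite[Theorem~2.5]{Pinchover}, $w$ is the increasing limit of the Dirichlet potentials $w_j$ on an exhaustion $D\Subset\Omega_1\Subset\Omega_2\Subset\cdots$, where $P_\alpha w_j=f$ in $\Omega_j$ and $w_j=0$ on $\partial\Omega_j$. Each $w_j\geq0$ by the weak maximum principle. It therefore suffices to bound $w$ separately on $\overline D$ and on $M\setminus\overline D$, with constants independent of $j$.

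On $\overline D$, the function $w$ is smooth. This is part of the cited construction, and also follows from local elliptic regularity, since $w$ is a locally bounded solution of $P_\alpha w=f$. Hence $C:=\max_{\overline D}w<\infty$. On the exterior, note that $P_\alpha w=0$ on $M\setminus\overline D$ because $f$ vanishes there.

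I would argue with the approximants, which avoids appealing to minimal growth abstractly. On $\Omega_j\setminus\overline D$ we have $P_\alpha(w_j-C)=0$. On $\partial D$ we have $w_j-C\leq w-C\leq0$, and on $\partial\Omega_j$ we have $w_j-C=-C\leq0$. The weak maximum principle \cite[Theorem~3.1]{GT} applies because $P_\alpha$ has no zeroth order term and $\Omega_j\setminus\overline D$ is bounded. It gives $w_j\leq C$ on $\Omega_j\setminus\overline D$. Letting $j\to\infty$ yields $w\leq C$ on $M\setminus\overline D$, so $0\leq w\leq C$ globally.

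Alternatively, one can use the minimal growth property directly: $w$ has minimal growth outside a compact set containing the source. The constant $C$ is a positive solution dominating $w$ on the inner boundary, so it dominates $w$ on the exterior. Either route works. The only delicate point is making sure the comparison constant is a genuine solution, which holds precisely because $P_\alpha1=0$. The main step to check is that the Green potential in the cited construction really is this monotone limit of Dirichlet potentials vanishing on $\partial\Omega_j$; that is how minimal Green functions are defined in Definition~\ref{def:elliptic}.
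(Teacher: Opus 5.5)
Your proof is correct and follows the paper's argument. Both bound $w$ by its finite maximum $C$ on a compact set containing the source. Both then extend $w\le C$ to the exterior by comparison with the constant $C$, which is a solution because $P_\alpha1=0$.

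The only difference is in that comparison step. The paper invokes the minimal growth property from \cite[Theorem~2.5]{Pinchover} directly, which is your stated alternative. Your main route instead proves the comparison by applying the weak maximum principle to the Dirichlet approximants $w_j\uparrow w$ on $\Omega_j\setminus\overline D$. That makes the argument self-contained, and it is exactly how minimal growth of Green potentials is established.
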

\begin{proof}
Let $w\geq0$ be the potential. It is smooth and has minimal growth
outside a compact set containing its source, by
\cite[Theorem~2.5]{Pinchover}, as described after
Proposition~\ref{prop:pinchover}. On a larger compact
comparison domain it has a finite maximum. A positive constant $C$
above that maximum is a solution of $P_\alpha C=0$. Minimal growth gives
$w\leq C$ on the exterior as well.
\end{proof}

\begin{lemma}\label{lem:levi-function}
If $P_\alpha$ is subcritical for some Hermitian metric $\alpha$, then
$M$ has a smooth bounded function with a positive Levi direction at
every point.
\end{lemma}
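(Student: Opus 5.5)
Take a locally finite cover of $M$ by relatively compact coordinate balls $B_k$, $k\ge1$, and fix smooth nonnegative compactly supported functions $\phi_k$ with $\phi_k>0$ on $B_k$, so that the open sets $\{\phi_k>0\}$ cover $M$. Since $P_\alpha$ is subcritical, Lemma~\ref{lem:green-bound} produces, for each $k$, a smooth nonnegative minimal Green potential $w_k$. It satisfies $P_\alpha w_k=\phi_k$, that is, $L_\alpha w_k=-\phi_k$, and $0\le w_k\le C_k$ on $M$ for some constant $C_k$.

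Choose positive weights $\varepsilon_k$ so small that
\[
\varepsilon_k\bigl(C_k+\|w_k\|_{C^2(K_j)}+\|\phi_k\|_{C^0(K_j)}\bigr)\le 2^{-k}
\]
for all $j\le k$. Here $K_1\subset K_2\subset\cdots$ is a compact exhaustion, and the $C^2$ norms are taken in finitely many fixed charts covering $K_j$. Put $w=\sum_k\varepsilon_kw_k$. The series converges uniformly on $M$, so $0\le w\le 1$. On each $K_j$ it converges in $C^2$, so $w$ is $C^2$ and
\[
L_\alpha w=-\sum_k\varepsilon_k\phi_k=:-\phi,
\]
with $\phi>0$ everywhere. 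To obtain a smooth function, either require the weights to control $C^m$ norms on $K_j$ for all $m\le k$, which gives $C^\infty$ convergence on compacts, or note that $\phi$ is smooth and apply elliptic regularity to $P_\alpha w=\phi$. Either way $w$ is smooth and bounded.

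Set $f=-w$. Then $f$ is smooth and bounded, and $L_\alpha f=\tr_\alpha(\ii\ddbar f)=\phi>0$ at every point. At each $p$, the trace of the Hermitian form $\Lev_pf$ with respect to the positive form $\alpha_p$ is positive. Diagonalize $\Lev_pf$ relative to $\alpha_p$: its eigenvalues sum to $\phi(p)>0$, so at least one eigenvalue is positive. The corresponding eigenvector $v$ satisfies $\Lev_pf(v,\bar v)>0$, so $f$ has a positive Levi direction at every point.

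The only delicate step is making the infinite sum converge in $C^2$ (or $C^\infty$) on compacts while the global sup bound survives. The diagonal choice of $\varepsilon_k$ against the exhaustion handles this: for fixed $j$, all but finitely many terms are controlled by $2^{-k}$ on $K_j$. The uniform bound uses only $\sum\varepsilon_kC_k<\infty$.
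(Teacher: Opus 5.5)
Your proposal is correct and is essentially the paper's argument. Both take bounded minimal Green potentials of sources whose positive loci cover $M$, sum them with weights chosen diagonally against sup and derivative bounds on compacts so that the series stays globally bounded and converges in $C^\infty$ locally, and then use the fact that a Hermitian form with positive $\alpha$-trace has a positive eigendirection. The differences are cosmetic: you take $f=-w$ instead of $f=1-w$, and you index the weights by a compact exhaustion. If you use the elliptic-regularity route for smoothness, choose $\phi_k$ supported in $\overline{B_k}$, so that $\phi$ is a locally finite, hence smooth, sum.
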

\begin{proof}
Choose nonnegative smooth compactly supported functions $\rho_j$ whose
positive loci cover $M$, and let $w_j\geq0$ be their minimal potentials,
so $P_\alpha w_j=\rho_j$. Lemma~\ref{lem:green-bound} bounds each
$w_j$ globally. On a countable collection of relatively compact
coordinate windows covering $M$, take cutoff extensions of the $w_j$.
Enumerate the global bounds and all derivative suprema of these
extensions as $A_{mj}\geq0$, with the global bound first, and put
\begin{equation}\label{eq:series-coefficients}
c_j=\frac{2^{-j-2}}{1+\sum_{m=0}^j A_{mj}}.
\end{equation}
For each fixed $m$, $c_jA_{mj}\leq2^{-j-2}$ once $j\geq m$.
Consequently all derivative series converge locally, and
\[
w=\sum_{j=0}^\infty c_jw_j\in C^\infty(M),\qquad
0\leq w\leq\tfrac12,\qquad P_\alpha w=\sum_jc_j\rho_j>0.
\]
The convergence of the first and second derivative series justifies
the last equality. Set $f=1-w$. Then $1/2\leq f\leq1$ and
$L_\alpha f=P_\alpha w>0$. A Hermitian form with no positive
direction is negative semidefinite, and its trace against a positive
metric is nonpositive. Therefore the Levi form of $f$ has a positive
direction at every point.
\end{proof}

\begin{lemma}\label{lem:trace-selection}
If a smooth real function $f$ has a positive Levi direction at every
point, then there is a smooth Hermitian metric $\alpha$ for which
$L_\alpha f>0$ everywhere.
\end{lemma}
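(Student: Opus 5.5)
The plan is to build $\alpha$ locally and patch by a partition of unity, using that the condition $L_\alpha f>0$ is \emph{convex} in the inverse matrix $G^{-1}$. By \eqref{eq:chern}, $L_\alpha f=\sum_{j,k}(G^{-1})_{kj}f_{z_j\bar z_k}$ is linear in $H=G^{-1}$, which is a positive Hermitian tensor of type contravariant (a Hermitian form on $T^{*1,0}M$). Positive Hermitian forms on the cotangent bundle form a convex cone. So I will produce, on each set of a locally finite open cover, a smooth positive Hermitian cotangent form $H_i$ with $\sum H_i^{kj}f_{z_j\bar z_k}>0$ there. Then I set $H=\sum_i\chi_iH_i$ for a partition of unity $\chi_i$, and let $\alpha$ be the metric with $G=H^{-1}$. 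Positivity of $H$ and the strict inequality survive the convex combination, and the inverse of a smooth positive Hermitian field is a smooth Hermitian metric.

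For the local step, fix $p$ and a direction $v\in T^{1,0}_pM$ with $\Lev_pf(v,\bar v)>0$. In a holomorphic chart near $p$, choose linear coordinates so that $v=\partial_{z_1}$ at $p$, which gives $f_{z_1\bar z_1}(p)>0$. Take $H_\varepsilon=\operatorname{diag}(1,\varepsilon,\ldots,\varepsilon)$ in these coordinates, viewed as the constant cotangent form. Then
\[
\tr_{H_\varepsilon}\Lev f=f_{z_1\bar z_1}+\varepsilon\sum_{k\ge2}f_{z_k\bar z_k}.
\]
At $p$ this is positive for $\varepsilon$ small. By continuity it stays positive on a neighborhood $U_p$, and we fix $\varepsilon=\varepsilon_p$. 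The key point is that no continuous choice of direction is needed: each $U_p$ uses its own fixed direction, and openness of the strict inequality provides the neighborhood.

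Next I pass to a countable locally finite refinement of $\{U_p\}$, which exists because $M$ is paracompact and second countable, and I take a subordinate partition of unity. At each point, $H=\sum\chi_iH_i$ is a finite convex combination of positive forms, so it is positive. Also
\[
\tr_H\Lev f=\sum_i\chi_i\tr_{H_i}\Lev f>0,
\]
since every term with $\chi_i>0$ is positive and at least one such term occurs.

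The main obstacle is only the bookkeeping of tensor types. The averaging must be done on the inverse metrics $G^{-1}$, not on the metrics $\alpha$, because the trace is linear in $G^{-1}$ and not in $G$. One also has to check that the constant matrices $H_i$ define honest tensors on overlapping charts. This holds because each $H_i$ is defined only on its own chart domain and is then multiplied by $\chi_i$, which is supported inside that domain, so the sum is a globally defined smooth section.
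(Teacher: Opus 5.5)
Your argument is correct, but it takes a different route from the paper. The paper does not argue directly. It applies an external selection result, \cite[Proposition~1.3]{CHR}, for Hermitian forms on a vector bundle of rank $d$, with $d=q=n$ and empty prescribed closed set. In that case one positive eigenvalue at each point yields a metric for which the sum of all eigenvalues, i.e.\ the trace, is positive.

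You prove exactly this special case by hand. Since $L_\alpha f=\tr(G^{-1}\Lev f)$ is linear in the dual form $H=G^{-1}$, the condition $L_\alpha f>0$ is open and convex in $H$. So your local constant choices $\operatorname{diag}(1,\varepsilon,\ldots,\varepsilon)$ patch through a partition of unity applied to $H$ rather than to $G$, and your tensor bookkeeping for this is right.

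Your version is self-contained and shows why no continuous choice of positive directions is needed. The citation is shorter, and it comes from a framework that also handles sums of $q<n$ eigenvalues and metrics prescribed near a closed set. There the relevant quantity is no longer linear in $G^{-1}$, so plain averaging is not immediately available. The lemma only uses the case $q=n$, so your argument suffices.
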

\begin{proof}
The Levi form is a smooth Hermitian form on $T^{1,0}M$.
Apply \cite[Proposition~1.3]{CHR} with bundle rank $d=n$, parameter
$q=n$, and empty prescribed closed set. The hypothesis requires one
positive eigenvalue; the conclusion makes the sum of all eigenvalues,
hence the metric trace, positive.
\end{proof}

\begin{proof}[Proof of Theorem~\ref{thm:levi}: conditions (1), (2), and Gauduchon metrics]
If no smooth function bounded from above has a positive Levi direction
at every point, Lemma~\ref{lem:levi-function} rules out a subcritical
$P_\alpha$. Since $P_\alpha1=0$, every $P_\alpha$ is critical.
Conversely, let $f\leq C$ have a positive Levi direction everywhere,
and choose $\alpha$ by Lemma~\ref{lem:trace-selection}. Then
$u=C+1-f>0$ and $P_\alpha u=L_\alpha f>0$. This is a positive
supersolution which is not a solution, so $P_\alpha$ is subcritical.
This proves the equivalence. In the critical case,
Proposition~\ref{prop:pinchover} gives one positive adjoint ray.
Proposition~\ref{prop:rays} gives uniqueness up to scaling, and
Theorem~\ref{thm:existence} gives existence. This proves the equivalence
of (1) and (2) and the implication (2)$\Rightarrow$(3).
\end{proof}

\phantomsection\label{proof:levi-metric-ends}
\begin{proof}[Proof of Theorem~\ref{thm:levi}: Gauduchon metrics when $M$ has at least two ends]
If $P_\alpha$ is critical, uniqueness follows from
Propositions~\ref{prop:pinchover}
and~\ref{prop:rays}. If $P_\alpha$ is subcritical, then so is
$P_\alpha^*$. Proposition~\ref{prop:taylor} gives two
nonproportional positive adjoint solutions on a manifold with at least
two ends. Their positive $(n-1)$st roots give two conformal Gauduchon
metrics which are not related by scaling. This proves the converse.
Applying this equivalence to every $\alpha$ proves
(3)$\Rightarrow$(2) when $M$ has at least two ends. The assertions about
positive solutions are proved in
\hyperref[proof:levi-positive-ends]{the remaining part of the proof below}.
\end{proof}

The following compact fiber consequence will be used for Lie groups.
\begin{corollary}\label{cor:proper-projection}
Suppose $p:M\to B$ is a proper surjection, where $B$ is a parabolic
Riemann surface or $\R$. Assume that $p$ is holomorphic in the first
case and pluriharmonic in the second. Then every Hermitian conformal
class on $M$ has a Gauduchon metric unique up to scaling, and every
$P_\alpha$ is critical.
\end{corollary}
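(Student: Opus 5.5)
By Theorem~\ref{thm:levi} it suffices to verify condition (1). That is, I will show that no smooth real function $f\leq C$ on $M$ has a positive Levi direction at every point. Criticality of every $P_\alpha$ and uniqueness up to scaling then follow from the part of the Levi criterion already proved.

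So suppose such an $f$ exists, and replace it by $u=C+1-f$. Then $u\geq1$, and at every point the Levi form of $u$ has a negative direction. Define the fiber minimum
\[
v(s)=\min_{p^{-1}(s)}u,\qquad s\in B.
\]
Since $p$ is proper and surjective, each fiber is compact and nonempty, so the minimum is attained. The function $v$ is lower semicontinuous, and in fact continuous by \cite[Theorem~1.2]{FKV}, because holomorphic maps and nonconstant pluriharmonic maps are open. Also $v\geq1$.

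The key step is to show that $v$ is superharmonic on $B$ in the viscosity sense.
\begin{itemize}
\item \emph{Holomorphic case.} Let $q$ touch $v$ from below at $s_0$, and let $x_0\in p^{-1}(s_0)$ be a point where $u(x_0)=v(s_0)$. Then $u-q\circ p\geq0$ near the compact fiber, with equality at $x_0$. So $u-q\circ p$ has a local minimum at $x_0$, and its Levi form is positive semidefinite there. Hence $\Lev u\geq \Lev(q\circ p)=q_{s\bar s}\,|dp(\cdot)|^2$ at $x_0$.

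Choose $\xi$ with $\Lev_{x_0}u(\xi,\bar\xi)<0$. This gives $q_{s\bar s}(s_0)\,|dp(\xi)|^2<0$, so $dp(\xi)\neq0$ and $q_{s\bar s}(s_0)<0$. Thus $v$ is a viscosity supersolution of the Laplacian, which is the same as superharmonic by \cite{HL}. A positive superharmonic function on a parabolic surface is constant \cite{AS}.

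\item \emph{Real case.} Here $p$ is pluriharmonic, hence locally $\mathrm{Re}\,h$ with $h$ holomorphic. Test functions satisfy $\Lev(q\circ p)=q''(s_0)\,|\partial p(\xi)|^2$, since the $q'$ term vanishes by pluriharmonicity. The same argument gives $q''(s_0)<0$, so $v$ is concave. A positive concave function on $\R$ is constant.
\end{itemize}

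The main obstacle is extracting a contradiction from $v\equiv c$. We now know that the minimum of $u$ on every fiber equals $c$, and $c$ is then the global minimum of $u$. At a point $x_0$ of a fiber where $u(x_0)=c$, the function $u$ attains a global minimum on $M$. Its Levi form is therefore positive semidefinite at $x_0$, which contradicts the existence of a negative direction of $\Lev u$. A plain interior-minimum argument gives this directly, and it even makes the viscosity step partly redundant. Alternatively, the argument above can be applied with the constant test function $q\equiv c$: the strict inequality $q_{s\bar s}(s_0)<0$ fails for the constant test, which again gives the contradiction.

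Either way, condition (1) holds, and Theorem~\ref{thm:levi} yields the corollary.
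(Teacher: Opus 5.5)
Your argument is correct. It has the same skeleton as the paper's proof: a fiber minimum over the compact fibers, viscosity superharmonicity or concavity on the base, a Liouville theorem on a parabolic surface or on $\R$, and then an attained global minimum. The difference is the object you run it on.

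The paper fixes an arbitrary metric $\alpha$ and an arbitrary positive supersolution $P_\alpha u\geq0$. At a minimizer of $u-q\circ p$, the trace only yields $a\,q_{z\bar z}\leq L_\alpha u\leq0$ with $a=\tr_\alpha(\ii\partial p\wedge\bar\partial p)$. This says nothing where $p$ is critical, so the paper needs a strong minimum principle argument there, and again at the end. In this way it proves directly that every positive supersolution of every $P_\alpha$ is constant. It reaches condition~(1) only afterwards, through Lemma~\ref{lem:trace-selection}.

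You instead use the pointwise \emph{strict} negative Levi direction of $u=C+1-f$. Since $\Lev(q\circ p)$ vanishes at critical points of $p$, a critical fiber minimizer is impossible whenever a lower test exists. You get $q_{s\bar s}<0$ (or $q''<0$) outright, and the endgame is a second derivative test at a global minimum. Your verification of condition~(1) needs no metric, no metric selection lemma, and no strong minimum principle.

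The price is that you obtain only condition~(1). Criticality and uniqueness then pass through the implication (1)$\Rightarrow$(2) of Theorem~\ref{thm:levi}, that is, through the Green potential series of Lemma~\ref{lem:levi-function}. The paper's version gives the per-metric Liouville statement for supersolutions directly. That statement is reused in Theorem~\ref{thm:harmonic-abelian} and in the proof of Theorem~\ref{thm:lie}; from your proof it is recovered via Theorem~\ref{thm:levi} and Proposition~\ref{prop:pinchover}.

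One correction to your commentary: the viscosity step is not even partly redundant. Parabolicity of the base, through the constancy of $v$, is exactly what makes $\inf_M u$ attained. The final contradiction is the easy step, not the main obstacle.
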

\begin{proof}
Fix $u>0$ with $P_\alpha u\geq0$, and take its minimum $v$ on each
compact fiber. Properness gives attainment and lower semicontinuity.
A lower test $q$ for $v$ at $s$ makes $u-q\circ p$ attain a local
minimum at a fiber minimizer. At a regular point, the Hessian trace
gives $q_{z\bar z}\leq0$ on a curve base, or $q''\leq0$ on $\R$.
The same inequality holds at a critical minimizer: if the relevant
second derivative were positive, it would stay positive nearby.
The strong minimum principle would make $u-q\circ p$ constant
there, contradicting the strict trace inequality at a nearby regular
point. Nonconstant holomorphic maps to curves are open with dense
regular points; the local holomorphic representation gives the same
facts in the pluriharmonic case. Thus $v$ is a positive superharmonic
function on $B$, or a positive concave function on $\R$.
It is constant. Attainment on a fiber gives a global minimum of $u$,
so $u$ is constant by the strong minimum principle.
If a function $f$ bounded from above had a positive Levi direction everywhere,
Lemma~\ref{lem:trace-selection} would make $C+1-f$ a nonconstant
positive supersolution, a contradiction. Theorem~\ref{thm:levi}
now gives the assertions.
\end{proof}

\subsection{The Stein construction}
A Stein manifold may be defined by holomorphic convexity, separation
of points by global holomorphic functions, and global holomorphic
local coordinates. Holomorphic convexity means that the holomorphic
hull of each compact set is compact; global local coordinates mean
that $n$ global holomorphic functions have independent differentials
at each chosen point. We use the finite holomorphic immersion supplied
by the Stein embedding theorem \cite[p.~123]{EG}.

\begin{proposition}\label{prop:stein}
Every connected noncompact Stein manifold of dimension $n\geq2$
has two conformal Gauduchon metrics which are not related by scaling.
The construction uses a smooth nonconstant density $h$ with $0<h<2$.
\end{proposition}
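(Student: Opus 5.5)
The plan is to apply the remark after Lemma~\ref{lem:root}. We want a smooth positive Hermitian cofactor field $A$, equivalently a positive real $(n-1,n-1)$-form $\Psi$, together with a smooth nonconstant function $h$ with $0<h<2$, such that
\[
\ddbar\Psi=0,\qquad \ddbar(h\Psi)=0 .
\]
Lemma~\ref{lem:root} then turns $\Psi$ into a metric $G_A$. The metrics $G_A$ and $h^{1/(n-1)}G_A$ are conformal Gauduchon metrics, and they are not related by scaling because $h$ is nonconstant. So everything reduces to producing the pair $(\Psi,h)$ on a Stein manifold. I would not go through Proposition~\ref{prop:taylor}, since a Stein manifold of dimension $n\geq2$ has only one end and subcriticality alone does not force two adjoint rays.

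\emph{Step 1: Stein geometry.} Use the finite holomorphic immersion $F:M\to\C^N$ from \cite[p.~123]{EG}, together with a smooth strictly plurisubharmonic exhaustion $\varphi>0$. Put $\omega=\ii\ddbar\varphi$, which is Kähler, so $\ddbar\omega^{n-1}=0$. For the density I would take
\[
h=1+\chi,\qquad \chi=\pm e^{-\varphi}\ \text{or a bounded function of it},
\]
so that $|\chi|<1$ gives $0<h<2$. The reason for this choice is that
\[
\Lev(-e^{-\varphi})=e^{-\varphi}\bigl(\Lev\varphi-\partial\varphi\otimes\bar\partial\varphi\bigr)
\]
is a positive form minus a rank-one term. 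Hence it is positive on $\ker\partial\varphi$, which is nonzero because $n\geq2$. This is the only place the dimension is used.

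\emph{Step 2: the cofactor field.} Write
\[
\ddbar(h\Psi)=\ii\ddbar h\wedge\Psi+\partial h\wedge\bar\partial\Psi-\bar\partial h\wedge\partial\Psi+h\,\ddbar\Psi .
\]
I would not take $\Psi=\omega^{n-1}$, because then only $\ddbar h\wedge\omega^{n-1}$ survives. Vanishing of that term would make $h$ a bounded Kähler-harmonic function, which need not exist (e.g.\ on $\C^n$). Instead I would use the degenerate rank-one structure above. Choose
\[
\Psi=a\,\omega^{n-1}+b\,\omega^{n-2}\wedge\ii\partial\varphi\wedge\bar\partial\varphi,
\]
with $a,b$ functions of $\varphi$ alone. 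The conditions $\ddbar\Psi=0$ and $\ddbar(h\Psi)=0$ then become two ODEs in the variable $\varphi$ for $a$ and $b$. In cofactor coordinates these are the equations $\mathcal D A=\mathcal D(hA)=0$ from \eqref{eq:density}.

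Working locally in coordinates adapted to $\partial\varphi$, the rank-one component of $\ii\ddbar h$ can be balanced against the $b$-term and the cross terms $\partial h\wedge\bar\partial\Psi$. The positivity of $\Lev(-e^{-\varphi})$ on $\ker\partial\varphi$ is what leaves room to keep $a>0$ and $a+b\cdot(\text{rank-one factor})>0$.

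\emph{Main obstacle.} The hard step is Step 2. The form $\ii\ddbar\varphi$ is not a function of $\varphi$, and terms such as $\partial\varphi\wedge\bar\partial\omega^{n-2}$ and $\ii\partial\varphi\wedge\bar\partial\varphi\wedge\ii\ddbar\varphi$ do not reduce to scalar ODEs unless $\varphi$ is chosen specially. There are two fallbacks, in this order:
\begin{enumerate}
\item First try to pull the construction back from the model $\C^N$ with $\varphi=|z|^2$ via $F$, solving the ODEs there, where the radial structure makes the reduction exact.
\item If that fails, construct $\Psi$ as $\omega_h^{n-1}$ for the Gauduchon metric furnished by Theorem~\ref{thm:existence} applied to the metric $\alpha$ from Lemma~\ref{lem:trace-selection} for $f=-e^{-\varphi}$. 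Then perturb by a compactly supported correction to obtain the second density.
\end{enumerate}
Positivity of $\Psi$ and the bound $0<h<2$ are checked at the end. They follow from $0<e^{-\varphi}<1$ and from choosing $a$ large relative to $|b|$.
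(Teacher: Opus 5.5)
Your reduction to a pair $(\Psi,h)$ with $\ddbar\Psi=\ddbar(h\Psi)=0$ is the same as the paper's. There is a genuine gap, however: the density you choose cannot work for any $\Psi$. Let $A$ be the cofactor field of $\Psi$. Since $\mathcal DA=0$, expanding $\mathcal D(hA)$ by the product rule leaves a real elliptic operator in $h$ with principal part $\sum_{j,k}A_{kj}h_{z_j\bar z_k}$ and no zeroth order term. By the strong maximum principle, a solution that attains an interior global extremum is constant.

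Your $h=1\pm e^{-\varphi}$ attains a global extremum at a minimum point of the exhaustion $\varphi$. More generally, any $h=\chi(\varphi)$ is constant on the boundary of each compact sublevel set, and is therefore forced to be constant.

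Your radial model in fallback (1) shows this explicitly. On $\C^N$ put $r=|z|^2$, $\omega_0=\ii\ddbar r$, $\eta=\ii\partial r\wedge\bar\partial r$, and $\Psi=F\omega_0^{n-1}+G\omega_0^{n-2}\wedge\eta$. Then
\[
\ii\ddbar\Psi=(F'-G)\,\omega_0^n+(F''-G')\,\eta\wedge\omega_0^{n-1}.
\]
Closedness, together with smoothness at the origin when $N=n$, forces $G=F'$. The same condition for $h\Psi$ with radial $h$ forces $h'F=0$. So on every shell where $h$ varies, $F=G=0$ and $\Psi$ vanishes. Pulling back through an immersion into $\C^N$ cannot repair this. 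Fallback (2) fails for the same maximum principle reason: a compactly supported $k$ with $\mathcal D(kA)=0$ vanishes identically. As you noted yourself, subcriticality gives no second adjoint ray on a manifold with one end. The positive Levi direction of $-e^{-\varphi}$ only produces nonconstant supersolutions of $L_\alpha$; it says nothing about a second solution of the adjoint equation.

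The missing idea is to let the density depend on a pluriharmonic function, which has no interior extrema. The paper takes a global holomorphic $W$ with nonconstant real part and sets $h=1+\tfrac12\tanh(\Re W)$.

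The model on $\C^n$ is the twisted cofactor $A=aT^*T$ of \eqref{eq:stein-target}. For it, both $\mathcal DA=0$ and $\mathcal D(h_0A)=0$ reduce to $p''-2xp'+(x^2-2)p=0$. Substituting $p=e^{x^2/2}y$ turns this into $y''=y$, which has the two positive solutions $\cosh x$ and $\cosh x+\tfrac12\sinh x$.

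The pair is transplanted to $M$ by the equidimensional maps $F_I=(W,V_{i_1},\ldots,V_{i_{n-1}})$, whose pullbacks preserve $\ddbar$-closedness. The finite sum $Q=\sum_IA_{F_I}$ is positive definite by Lemma~\ref{lem:cofactor-cover}. The common first component gives every summand the same ratio $h$, so $\mathcal DQ=\mathcal D(hQ)=0$. Your plan contains none of these three ingredients: the pluriharmonic variable, the explicit twisted model pair, and the summation over equidimensional maps.
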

We use $\Re z$ and $\Im z$ for the real and imaginary parts of a complex
number $z$. The notation $\operatorname{Re}z$ has the same meaning as $\Re z$.
On $\C^n$ write $x=\Re z_1$, $w=z_2$,
and let $E_{12}$ have its only nonzero entry, equal to one, at row~1,
column~2. Put
\begin{equation}\label{eq:stein-target}
a(x)=e^{x^2/2}\cosh x,\quad
T=I-\tfrac12xwE_{12},\quad
A=aT^*T,\quad h_0=1+\tfrac12\tanh x.
\end{equation}
Here $T^*$ denotes conjugate transpose.
The symbol $I$ denotes the identity matrix of the required size;
$I_r$ denotes the $r\times r$ identity matrix.
\begin{lemma}\label{lem:stein-target}
The matrix field $A$ is smooth positive Hermitian and satisfies
$\mathcal DA=\mathcal D(h_0A)=0$.
The function $s\mapsto1+\tfrac12\tanh s$ is strictly increasing
and lies between $1/2$ and $3/2$.
\end{lemma}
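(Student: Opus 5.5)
The plan is to reduce both identities $\mathcal DA=0$ and $\mathcal D(h_0A)=0$ to a single linear second order ordinary differential equation in $x$, and then to verify that $a$ and $h_0a$ both solve it. The statements about positivity and about $h_0$ are immediate.

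For positivity and smoothness, note that $T$ is unipotent upper triangular, so $\det T=1$ and $T$ is invertible. Hence $T^*T$ is positive definite Hermitian. Since $a>0$, the field $A=aT^*T$ is smooth positive Hermitian. Explicitly, $T^*T$ agrees with the identity outside the upper left $2\times2$ block, where it equals
\[
\begin{pmatrix}1&-\tfrac12xw\\-\tfrac12x\bar w&1+\tfrac14x^2|w|^2\end{pmatrix}.
\]
For the claim about $h_0$: $\tanh$ is strictly increasing with values in $(-1,1)$, so $1+\tfrac12\tanh s$ is strictly increasing with values in $(1/2,3/2)$.

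For the divergence, take an arbitrary smooth function $h=h(x)$ and put $F=ha$. I will compute $\mathcal D(hA)$ from \eqref{eq:divergence} entry by entry, using $\partial_{z_1}=\tfrac12\partial_x$ on functions of $x$ alone.
\begin{itemize}
\item The diagonal entries with index $j\geq3$ depend only on $x$, so $\partial_{z_j}\partial_{\bar z_j}$ kills them.
\item The $(1,1)$ entry contributes $\tfrac14F''$.
\item The entry $A_{12}$ enters with $j=2$, $k=1$. It contributes $\partial_{w}\partial_{\bar z_1}(-\tfrac12xwF)=-\tfrac14(xF)'$.
\item The entry $A_{21}$ contributes the same amount, by conjugate symmetry.
\item The $(2,2)$ entry contributes $\partial_w\partial_{\bar w}\bigl(F(1+\tfrac14x^2|w|^2)\bigr)=\tfrac14x^2F$.
\end{itemize}
Therefore
\[
4\,\mathcal D(hA)=F''-2(xF)'+x^2F=F''-2xF'+(x^2-2)F.
\]

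The key step is to solve this ODE by the substitution $F=e^{x^2/2}g$. A short computation turns the operator into $e^{x^2/2}(g''-g)$. For $h=1$ we have $g=\cosh x$. For $h=h_0$ we have $g=\cosh x+\tfrac12\sinh x$. Both satisfy $g''=g$, which gives $\mathcal DA=\mathcal D(h_0A)=0$.

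The main risk is bookkeeping in the index convention $A_{kj}$ and in the factors of $\tfrac12$ coming from $\partial_{z_1}$ acting on functions of $\Re z_1$. I will double check these by confirming that the cross terms combine into exactly $-2xF'-2F$, so that the conjugation structure $e^{x^2/2}$ appears. The choice of $a$ in \eqref{eq:stein-target} suggests this is the intended cancellation.
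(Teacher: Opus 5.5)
Your proposal is correct and follows the paper's proof. Both reduce $\mathcal D$, entry by entry, to the scalar equation $p''-2xp'+(x^2-2)p=0$; you run the computation once for $p=h(x)\,a$ with a general $h$, and your substitution $p=e^{x^2/2}g$, which turns the equation into $g''=g$, makes explicit the check that the paper states without detail for $p=a$ and $p=ah_0$.
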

\begin{proof}
The triangular matrix $T$ is invertible, so $aT^*T$ is positive.
For a matrix of this shape write
\[
A_{11}=p(x),\quad A_{12}=q(x)w,\quad
A_{21}=q(x)\bar w,\quad A_{22}=p(x)+r(x)|w|^2,
\]
with $A_{jj}=p$ for $j\geq3$ and other entries zero.
Direct Wirtinger differentiation gives
\[
\mathcal DA=\tfrac14p''+q'+r.
\]
For \eqref{eq:stein-target}, $p=a$, $q=-xp/2$, $r=x^2p/4$, hence
\[
\mathcal DA=\tfrac14\{p''-2xp'+(x^2-2)p\}.
\]
Both $p=a$ and $p=ah_0$ satisfy the displayed scalar equation.
This proves both identities. The derivative of the profile is
$\tfrac12(1-\tanh^2s)>0$, and $-1<\tanh s<1$ gives its bounds.
\end{proof}

For a holomorphic map $F:M\to\C^n$ with Jacobian $J_F$,
the pullback of the represented $(n-1,n-1)$-form has matrix
\begin{equation}\label{eq:stein-pullback}
A_F=B_F^*A(F)B_F,\qquad B_F=\adj(J_F^T).
\end{equation}
Holomorphic pullback commutes with $\partial$ and $\bar\partial$
\cite[\S1.7]{Forstneric}; this is also the part concerning smooth forms of
\cite[Theorem~1.1]{SK}. Thus $\mathcal DA_F=0$, and $A_F$ is
positive semidefinite.
\begin{lemma}\label{lem:cofactor-cover}
Suppose $D_1,\ldots,D_N$ span $\C^n$, and $v\ne0$.
For every row $w_0$, there are indices $i_1,\ldots,i_{n-1}$ such that
\[
\adj([w_0;D_{i_1};\ldots;D_{i_{n-1}}]^T)v\ne0.
\]
The semicolons in $[w_0;D_{i_1};\ldots;D_{i_{n-1}}]$ indicate the matrix
formed by stacking the listed rows.
\end{lemma}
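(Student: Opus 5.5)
The plan is to compute the vector $\adj(M^T)v$ explicitly by cofactor expansion, where $M=[w_0;D_{i_1};\ldots;D_{i_{n-1}}]$, and then choose the indices so that one specific component of it is visibly a nonzero determinant. The row $w_0$ will play no role, which is why the statement holds for every $w_0$.

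First, I identify the components. Write $C_{ij}$ for the $(i,j)$ cofactor of $M$, so $\adj(M)_{ji}=C_{ij}$. Since $\adj(M^T)=\adj(M)^T$, the $i$th component of $\adj(M^T)v$ is
\[
\sum_{j=1}^n \adj(M)_{ji}v_j=\sum_{j=1}^n C_{ij}v_j .
\]
By Laplace expansion along row $i$, this equals the determinant of the matrix obtained from $M$ by replacing its $i$th row with the row $v^T$. The transpose here carries no conjugation. In particular, the first component of $\adj(M^T)v$ is
\[
\det[v^T;D_{i_1};\ldots;D_{i_{n-1}}],
\]
and it does not depend on $w_0$ at all.

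Next, I choose the indices. Since $v\neq0$ and the rows $D_1,\ldots,D_N$ span $\C^n$, the Steinitz exchange lemma lets us extend the single independent vector $v^T$ to a basis of $\C^n$ using $n-1$ of the spanning vectors. Concretely, I would add $D_i$'s greedily whenever they are not in the span of the vectors already chosen; this terminates with $n$ independent vectors because the $D_i$ span. Call the chosen indices $i_1,\ldots,i_{n-1}$. Then $v^T,D_{i_1},\ldots,D_{i_{n-1}}$ form a basis, so the displayed determinant is nonzero. Hence the first component of $\adj(M^T)v$ is nonzero, and so $\adj(M^T)v\neq0$.

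There is no real obstacle here. The only point needing care is the index bookkeeping in the identity $\adj(M^T)_{ij}=\adj(M)_{ji}=C_{ij}$, checking that the contraction with $v$ really reproduces the row-$i$ Laplace expansion with $v$ inserted as a row. If the convention were off by a transpose, the same argument would go through with columns in place of rows.
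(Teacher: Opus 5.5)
Your proof is correct and is essentially the paper's argument: you extend $v$ to a basis using $n-1$ of the $D_i$, and you identify $\det[v;D_{i_1};\ldots;D_{i_{n-1}}]$ as the first component of $\adj(M^T)v$, which does not depend on $w_0$. Your explicit cofactor bookkeeping, $\adj(M^T)_{ij}=C_{ij}$ followed by Laplace expansion along row $i$, spells out the step the paper cites as the adjugate form of Cramer's rule.
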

\begin{proof}
Extend $v$ to a basis using rows of the spanning family.
The determinant $\det[v;D_{i_1};\ldots;D_{i_{n-1}}]$ is nonzero.
By the adjugate form of Cramer's rule, it is the first component of
the vector in the statement, independently of $w_0$.
\end{proof}
\begin{proof}[Proof of Proposition~\ref{prop:stein}]
Choose global holomorphic functions $V_1,\ldots,V_N$ whose
differentials span every cotangent space, using \cite{EG}.
Separation of two points gives a global holomorphic $W$ with
nonconstant real part: replace a separating function by $-\ii$ times
it if necessary. For every tuple $I=(i_1,\ldots,i_{n-1})$ put
\[
F_I=(W,V_{i_1},\ldots,V_{i_{n-1}}),\qquad Q=\sum_I A_{F_I}.
\]
This is a finite sum; repetitions in a tuple are allowed.
Every summand is smooth, positive semidefinite, and is annihilated by
$\partial\bar\partial$. Lemma~\ref{lem:cofactor-cover}
shows that for each nonzero vector one summand is strictly positive,
so $Q$ is positive definite. All maps have the same first component,
and therefore the same pullback ratio
\[
h=1+\tfrac12\tanh(\Re W).
\]
The pullback of $h_0A$ is $hA_{F_I}$. Hence
$\mathcal DQ=\mathcal D(hQ)=0$. The function $h$ is positive,
bounded by~2, and nonconstant. Lemma~\ref{lem:root} gives a
Gauduchon metric with cofactor $Q$, and multiplying it by
$h^{1/(n-1)}$ gives the required second metric.
\end{proof}

\section{Complex Lie groups}\label{sec:lie}
The abelian classification follows from compact fibers and two transverse
real coordinates. For a nonabelian group, the adjoint representation
provides a solvable quotient, or the radical quotient is semisimple.
In each case a positive nonconstant function gives an explicit failure
of conformal uniqueness.

\subsection{Holomorphic volume forms and group projections}
Let $\Omega$ be a nowhere vanishing holomorphic $(n,0)$-form.
In a chart write $\Omega=\mu\,dz_1\wedge\cdots\wedge dz_n$.
The notation $\iota_X\Omega$ denotes contraction of the differential form
$\Omega$ with the vector field $X$.
For holomorphic vector fields $X_a=\sum_iF_{ia}\partial_{z_i}$ put
$H_{ia}=\mu F_{ia}$. Then
\begin{equation}\label{eq:closed-columns}
d(\iota_{X_a}\Omega)=0\quad\Longleftrightarrow\quad
\sum_i\partial_{z_i}H_{ia}=0.
\end{equation}
We use finite families of these fields which span every tangent space.

\begin{lemma}\label{lem:volume-frame}
Suppose that $H$ is a holomorphic $n\times r$ matrix of rank $n$
satisfying \eqref{eq:closed-columns}. The cofactor matrix
\begin{equation}\label{eq:gram-cofactor}
A_{ji}=\sum_a\overline{H_{ja}}H_{ia}
\end{equation}
is positive Hermitian, satisfies the divergence identities
\eqref{eq:cofactor-divergences},
and satisfies $\mathcal DA=0$. For every smooth real $u$,
\begin{equation}\label{eq:cofactor-harmonic}
\mathcal D(uA)=\sum_{i,j}A_{ji}u_{z_i\bar z_j}.
\end{equation}
Compatible families give a global Gauduchon metric through
Lemma~\ref{lem:root}.
\end{lemma}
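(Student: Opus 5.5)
The plan is a direct chart computation, followed by a check that the construction is independent of the chart. In matrix form \eqref{eq:gram-cofactor} reads $A=\bar H H^T$. For a vector $v\in\C^n$,
\[
\bar v^TA v=\sum_a\Bigl|\sum_i H_{ia}v_i\Bigr|^2 .
\]
This is $\geq0$ and vanishes only if $v^T H=0$. Since $H$ has rank $n$, that forces $v=0$, so $A$ is positive Hermitian. Hermitian symmetry $\overline{A_{ji}}=A_{ij}$ is immediate from the formula.

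Next I will derive the two divergence identities, which I expect are the content of \eqref{eq:cofactor-divergences}:
\[
\sum_i\partial_{z_i}A_{ji}=0,\qquad \sum_j\partial_{\bar z_j}A_{ji}=0 .
\]
For the first, the factor $\overline{H_{ja}}$ is antiholomorphic, so $\partial_{z_i}$ falls only on $H_{ia}$. The sum is then $\sum_a\overline{H_{ja}}\sum_i\partial_{z_i}H_{ia}$, which vanishes by \eqref{eq:closed-columns}. The second identity is the complex conjugate statement: $\partial_{\bar z_j}$ falls only on $\overline{H_{ja}}$, and $\sum_j\partial_{\bar z_j}\overline{H_{ja}}=\overline{\sum_j\partial_{z_j}H_{ja}}=0$. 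Applying $\partial_{\bar z_k}$ to the first identity (after relabeling indices to match \eqref{eq:divergence}) gives $\mathcal DA=0$.

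For \eqref{eq:cofactor-harmonic}, I will expand $\partial_{z_j}\partial_{\bar z_k}(uA_{kj})$ by the Leibniz rule into four terms:
\begin{itemize}
\item $u\,\partial_{z_j}\partial_{\bar z_k}A_{kj}$, which sums to $u\,\mathcal DA=0$;
\item $u_{z_j}\partial_{\bar z_k}A_{kj}$, which vanishes after summing over $k$ by the second divergence identity;
\item $u_{\bar z_k}\partial_{z_j}A_{kj}$, which vanishes after summing over $j$ by the first identity;
\item $u_{z_j\bar z_k}A_{kj}$.
\end{itemize}
Only the last term survives, and renaming $(j,k)\to(i,j)$ gives the stated formula. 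The main hazard in the whole proof is index bookkeeping: the transposed cofactor index $kj$ in \eqref{eq:divergence} against $A_{ji}$ in \eqref{eq:gram-cofactor}. I will double-check it by verifying that the surviving term is the one paired with the derivatives that \eqref{eq:closed-columns} does not kill.

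Finally, for the global statement I will verify the cofactor law \eqref{eq:cofactor-law}. Under a holomorphic change of frame with Jacobian $J$, vector field components transform by $F_e=J^{-1}F_f$ and the coefficient of $\Omega$ by $\mu_e=(\det J)\mu_f$. Hence $H_e=(\det J)J^{-1}H_f=\adj(J)H_f$, and
\[
A_e=\overline{H_e}H_e^T=\adj(\bar J)\,\bar H_fH_f^T\,\adj(J)^T=\adj(\bar J)A_f\adj(J^T).
\]
Condition \eqref{eq:closed-columns} is the chart-free condition $d(\iota_{X_a}\Omega)=0$, so it holds in every chart. Thus a globally defined family $X_a$ gives a global positive cofactor field with $\mathcal DA=0$. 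Lemma~\ref{lem:root} and \eqref{eq:density} then produce the Gauduchon metric $G_A$.
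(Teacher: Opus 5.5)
Your proposal is correct and follows the paper's proof. Positivity comes from $\bar v^TAv=|H^Tv|^2$ with $H^T$ injective. The two divergence identities come from holomorphy of $H$ together with \eqref{eq:closed-columns}. A product-rule expansion of $\mathcal D(uA)$ then lets these identities remove the first and zeroth order terms. Your check that $H_e=\adj(J)H_f$ gives the cofactor law \eqref{eq:cofactor-law} is also correct; it fills in the globalization step, which the paper only asserts.
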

\begin{proof}
The quadratic form is $|H^Tv|^2$, positive for $v\ne0$ because
$H^T$ is injective. Since $H$ is holomorphic,
\eqref{eq:closed-columns} gives
\begin{equation}\label{eq:cofactor-divergences}
\sum_i\partial_{z_i}A_{ji}
=\sum_a\overline{H_{ja}}\sum_i\partial_{z_i}H_{ia}=0,
\qquad \sum_j\partial_{\bar z_j}A_{ji}=0.
\end{equation}
Expand $\mathcal D(uA)$ by the product rule. These identities remove
the first order and zeroth order terms, leaving
\eqref{eq:cofactor-harmonic}.
\end{proof}

Every complex Lie group has a right invariant holomorphic volume form
$\Omega_R$. The flow of a left invariant holomorphic vector field is
a right translation, so it preserves $\Omega_R$. Cartan's formula
gives \eqref{eq:closed-columns}. If $L$ and $R$ are the local matrices
of left invariant and right invariant frames, normalized at the identity, then
\begin{equation}\label{eq:native-frame}
H=(\det R)^{-1}L.
\end{equation}
The columns of these matrices are the coordinate coefficients of the
frame vectors, using the same basis of $\mathfrak g$ at the identity.
The coefficient of $\Omega_R$ is $(\det R)^{-1}$.
For $g\in G$, let $c_g(h)=ghg^{-1}$. The adjoint action is
$\Ad(g)=(dc_g)_1:\mathfrak g\to\mathfrak g$, represented in the chosen
basis. The symbol $\det_\C$ is the determinant of a complex linear map,
and $R\Ad(g)$ is ordinary matrix multiplication.
Writing $\Delta(g)=\det_\C\Ad(g)$ gives $L=R\Ad(g)$.
The reconstructed metric is
\[
\omega_{\rm bal}=|\Delta|^{2/(n-1)}\omega_L,
\]
where $\omega_L$ is the left invariant metric determined by the chosen
basis. It is balanced, meaning $d\omega_{\rm bal}^{n-1}=0$.
For unimodular groups, where the modular character $\Delta$ is
constant, this agrees with the invariant balanced construction
\cite[\S3]{FeiYau}.

Let $q:G\to S$ be a surjective holomorphic homomorphism of connected
complex Lie groups, of dimensions $n$ and $m$. Let
$P=dq_1:\mathfrak g\to\mathfrak s$, choose a complex linear
right inverse $T$, and let $B$ be a basis matrix of $\mathfrak s$.
All linear maps are represented in fixed bases. Thus $P$ and $T$ are
$m\times n$ and $n\times m$ matrices, respectively, with $PT=I_m$;
the columns of the invertible matrix $B$ are the chosen basis vectors
of $\mathfrak s$. We write $L_G,R_G$ for the left invariant and right
invariant frame matrices on $G$, and $L_S$ for the left invariant frame
matrix on $S$.
The matrix
\begin{equation}\label{eq:split-columns}
C=[TB,I-TP],\qquad PC=[B,0],\qquad
C\begin{bmatrix}B^{-1}P\\ I\end{bmatrix}=I
\end{equation}
has spanning columns. Thus $H_G=(\det R_G)^{-1}L_GC$ gives the
family in Lemma~\ref{lem:volume-frame}.
In $[TB,I-TP]$, the two blocks are placed side by side; the displayed
column matrix stacks its two blocks vertically. Expressions such as
$L_GC$ and $L_SB$ are ordinary matrix products.
\begin{proposition}\label{prop:group-lift}
Suppose $S$ has a smooth positive nonconstant function $v$ satisfying
\begin{equation}\label{eq:frame-harmonic}
\sum_{i,j,a}(L_SB)_{ia}\overline{(L_SB)_{ja}}v_{z_i\bar z_j}=0.
\end{equation}
Then $G$ has two conformal Gauduchon metrics which are not related
by scaling, with density ratio $u=v\circ q$.
\end{proposition}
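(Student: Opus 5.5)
The plan is to take the cofactor field built from the spanning family $H_G=(\det R_G)^{-1}L_GC$ in \eqref{eq:split-columns}, and show that multiplying it by $u=v\circ q$ preserves $\mathcal D$-closedness. First I would record that $H_G$ qualifies for Lemma~\ref{lem:volume-frame}. Its columns are $(\det R_G)^{-1}$ times the coordinate coefficients of the left invariant fields $X_a$ generated by the columns of $C$. The coefficient of $\Omega_R$ is $(\det R_G)^{-1}$, and left invariant fields preserve $\Omega_R$, so \eqref{eq:closed-columns} holds for every column. Since $C$ has spanning columns, $H_G$ has rank $n$.

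For globality, I would note that
\[
A_{ji}=\sum_a\overline{H_{ja}}H_{ia}
\]
represents, up to the fixed dimensional factor, the intrinsic real $(n-1,n-1)$-form $\sum_a\ii^{\,\ast}\,\iota_{X_a}\Omega_R\wedge\overline{\iota_{X_a}\Omega_R}$. Hence it obeys the cofactor law \eqref{eq:cofactor-law} under holomorphic changes of chart. Lemma~\ref{lem:volume-frame} then gives a positive Hermitian $A$ with $\mathcal DA=0$. Lemma~\ref{lem:root} turns it into a global Gauduchon metric $G_A$ on $G$.

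The key step is to compute $\mathcal D(uA)$ through \eqref{eq:cofactor-harmonic}:
\[
\mathcal D(uA)=\sum_{i,j}A_{ji}u_{z_i\bar z_j}
=|\det R_G|^{-2}\sum_a\Lev u(X_a,\bar X_a).
\]
Here $u=v\circ q$ and $q$ is holomorphic, so the chain rule gives
\[
\Lev_g u(X,\bar X)=\Lev_{q(g)}v\bigl(dq\,X,\overline{dq\,X}\bigr).
\]
Since $q$ is a homomorphism, $q(g\exp t\xi)=q(g)\exp(tP\xi)$. Hence $dq$ carries the left invariant field of $\xi\in\mathfrak g$ to the left invariant field of $P\xi$ on $S$. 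By $PC=[B,0]$, the second block of columns maps to zero. The first block maps to the left invariant fields on $S$ whose coordinate coefficients are the columns of $L_SB$. Therefore
\[
\sum_a\Lev u(X_a,\bar X_a)=\Bigl(\sum_{i,j,a}(L_SB)_{ia}\overline{(L_SB)_{ja}}\,v_{z_i\bar z_j}\Bigr)\circ q,
\]
which vanishes by \eqref{eq:frame-harmonic}. Thus $\mathcal D(uA)=\mathcal DA=0$.

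Finally, $u>0$ is smooth, and it is nonconstant because $q$ is surjective and $v$ is nonconstant. The remark after Lemma~\ref{lem:root} then gives the two Gauduchon metrics $G_A$ and $u^{1/(n-1)}G_A$. They are conformal and not related by scaling, and their density ratio is $u$.

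The main obstacle I expect is bookkeeping rather than analysis. One must check that the index placement in $A_{ji}$ and in \eqref{eq:cofactor-harmonic} really pairs into $\Lev u(X_a,\bar X_a)$, and not into its conjugate or a transposed pairing. One must also check that the matrix $L_SB$ in \eqref{eq:frame-harmonic} uses the same convention, with columns as coordinate coefficients. Both are real-valued Hermitian expressions, so a transposition would at worst conjugate a real quantity. I would first verify this on the diagonal term in a single chart. The globality of $A$ is the other point needing care. I would handle it via the intrinsic form description rather than by verifying \eqref{eq:cofactor-law} directly.
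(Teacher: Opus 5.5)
Your proposal is correct and follows essentially the same route as the paper. You build $A$ from $H_G=(\det R_G)^{-1}L_GC$ and reduce $\mathcal D(uA)$ to the contraction \eqref{eq:cofactor-harmonic}. Your observation that $dq$ carries left invariant fields to left invariant fields is the paper's identity $J_qL_G=L_SP$, and with $PC=[B,0]$ and the holomorphic Hessian chain rule it turns that contraction into $|\det R_G|^{-2}$ times the composite with $q$ of the left side of \eqref{eq:frame-harmonic}. Your extra remark on globality via $\iota_{X_a}\Omega_R\wedge\overline{\iota_{X_a}\Omega_R}$ only makes explicit what the paper leaves to the last sentence of Lemma~\ref{lem:volume-frame}.
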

\begin{proof}
Here $J_q$ is the Jacobian matrix of $q$ in local holomorphic coordinates.
Let $A_G$ be the cofactor from $H_G$. Differentiating the homomorphism
identity gives $J_qL_G=L_SP$, hence
$J_qH_G=(\det R_G)^{-1}[L_SB,0]$.
The mixed Hessian chain rule for a holomorphic map gives
\[
(v\circ q)_{z_i\bar z_j}
=\sum_{k,\ell}(J_q)_{ki}\overline{(J_q)_{\ell j}}
v_{z_k\bar z_\ell}.
\]
Therefore the contraction in \eqref{eq:cofactor-harmonic} is
$|\det R_G|^{-2}$ times \eqref{eq:frame-harmonic}, and is zero.
The extra columns project to zero. Lemmas~\ref{lem:volume-frame}
and~\ref{lem:root} give the two metrics. Surjectivity makes
$u$ nonconstant.
\end{proof}

\subsection{Nonabelian groups}
\begin{proposition}\label{prop:nonabelian}
Every connected nonabelian complex Lie group of complex dimension
$n\geq2$ has a Hermitian conformal class containing two Gauduchon
metrics which are not related by scaling.
\end{proposition}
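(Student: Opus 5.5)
Every connected nonabelian complex Lie group $G$ should admit a surjective holomorphic homomorphism $q:G\to S$ onto a low-dimensional nonabelian group $S$. We then find on $S$ a positive nonconstant $v$ satisfying \eqref{eq:frame-harmonic} and apply Proposition~\ref{prop:group-lift}. The plan is to split by the radical, as the section introduction indicates.

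\emph{Case 1: $G/\rad G$ is nontrivial.} Then $G/\rad G$ is a nontrivial connected complex semisimple group. Quotienting further by a normal subgroup, we reach a simple factor $S$, whose Lie algebra $\mathfrak s$ is complex simple. For this case we have two options.
\begin{itemize}
\item Use a further homomorphism of $\mathfrak{sl}_2$-type. This is awkward, because $S$ need not map onto a smaller group.
\item Work directly on $S$. The Iwasawa/Bruhat structure gives a holomorphic homomorphism from a Borel subgroup, but not from $S$ itself.
\end{itemize}
We therefore take the second option and work on $S$ directly. The operator in \eqref{eq:frame-harmonic} is $\sum_a X_a\bar X_a$ over the left invariant fields $X_a$. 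This is a left invariant complex Laplacian, right-invariant up to a change of basis. The aim is a positive nonconstant harmonic function for it. Natural candidates are $v=|\phi|^2$-type functions or functions of a matrix coefficient. For instance, take $v(g)=\|g\cdot e\|^2$ for a highest weight vector $e$ of a representation, or powers of it. Since $\sum_aX_a\bar X_a$ acts on $\|\pi(g)e\|^{2s}$ by a Casimir-like computation, we choose the exponent $s$ so that the resulting scalar multiple vanishes. We expect $s$ to be negative, making $v$ positive and nonconstant. The expected computation is a quadratic in $s$ with roots $0$ and a negative value. Its coefficients come from the orbit map $g\mapsto g e$, which intertwines $X_a$ with the vector fields $\pi(X_a)$ on $V\setminus\{0\}$.

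\emph{Case 2: $G$ is solvable (or the semisimple quotient is trivial).} The paper's hint is to use the adjoint representation. Since $G$ is nonabelian, $\mathfrak g'=[\mathfrak g,\mathfrak g]\ne0$. A solvable nonabelian complex Lie algebra has a quotient isomorphic either to the affine algebra $\aff(1,\C)$ (basis $X,Y$, $[X,Y]=Y$) or to the Heisenberg algebra. This should follow from Lie's theorem applied to $\Ad$ acting on $\mathfrak g/[\mathfrak g',\mathfrak g']$ or a suitable quotient. We therefore reduce to two model groups.

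For the affine group $S=\{(a,b):a\in\C^*,b\in\C\}$ with left invariant fields $X=a\partial_a+0$ and $Y=a\partial_b$, \eqref{eq:frame-harmonic} becomes
\[
|a|^2(v_{a\bar a}+v_{b\bar b})=0
\]
after adjusting by the basis $B$. Then $v=e^{\Re b}$, or $v=2+\tanh$-like harmonic functions in $b$, gives a positive nonconstant solution. Any positive nonconstant harmonic function of $b$ alone, such as $e^{\Re b}$, works. For the Heisenberg group we similarly look for $v$ depending on the central coordinate and one other coordinate. The left fields are $\partial_x$, $\partial_y+x\partial_t$, and $\partial_t$. Exponentials $e^{\Re(\lambda x)}$ depending on $x$ alone are annihilated when $|\lambda|$ terms cancel, but $X\bar X$ gives $|\lambda|^2$. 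We will instead try $v=e^{\Re y}$, which faces the same issue. So the honest plan here is to choose $v$ depending on two coordinates with Laplacian $v_{x\bar x}+v_{y\bar y}=0$, for instance $v=e^{\Re x}\,\phi$ with $\phi(y)=e^{\Re(i y)}$-type terms, whose complex Laplacians cancel. The choice $v=e^{\Re x+\Re(\ii y)}$ seems right: the contributions $\tfrac14$ and $-\tfrac14$ cancel, and $v$ is independent of $t$, so the $x\partial_t$ terms drop out.

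The main obstacle is Case~1. It requires checking that the simple group $S$ has a genuinely positive nonconstant solution of the invariant operator, and handling the fact that $S$ may not be simply connected. Working with representations of $S$ itself avoids the covering issue. A cleaner alternative would be to realise $\mathfrak s$ as containing a copy of $\aff$. This does not yield a quotient map, so we keep the highest-weight approach as the first attempt.
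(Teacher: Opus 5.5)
There is a genuine gap in your Case~2, and its nilpotent part cannot be repaired with your strategy of applying Proposition~\ref{prop:group-lift}. Both of your candidate functions have the wrong sign. For every $\lambda\in\C$, $\partial_b\partial_{\bar b}e^{\Re(\lambda b)}=\tfrac{|\lambda|^2}{4}e^{\Re(\lambda b)}$. So on $\operatorname{Aff}(\C)$ the function $e^{\Re b}$ gives $|a|^2(v_{a\bar a}+v_{b\bar b})=\tfrac14|a|^2v\neq0$. On the Heisenberg group, $e^{\Re x+\Re(\ii y)}=e^{\Re(x+\ii y)}$ gives $v_{x\bar x}+v_{y\bar y}=\tfrac12v$: the two contributions are both $+\tfrac14$, not $\pm\tfrac14$, as they must be for the exponential of a pluriharmonic function. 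More basically, a positive harmonic function of $b$ alone on $\C$, or of $(x,y)$ alone on $\C^2\cong\R^4$, is constant. For the affine group the remedy is to use both variables: $v=(|a|^2+|b|^2)^{-1}$ is harmonic on $\C^2\setminus\{0\}\supset\C^*\times\C$, and this is the paper's choice. For nilpotent groups no such remedy exists. With any left invariant frame, the operator in \eqref{eq:frame-harmonic} equals $\tfrac14\sum_a\bigl((X_a^\R)^2+((\ii X_a)^\R)^2\bigr)$, a drift-free sum of squares of left invariant real fields. On a connected nilpotent Lie group (polynomial growth, scale-invariant Harnack inequality), every positive solution of such an operator is constant. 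Every quotient of a nilpotent group is nilpotent, so Proposition~\ref{prop:group-lift} can never produce the pair when $G$ is nilpotent. Your reduction to the three-dimensional Heisenberg group is also false as stated: the five-dimensional Heisenberg algebra has only abelian proper quotients.

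The missing idea is to give up left invariant frames, which Lemma~\ref{lem:volume-frame} allows as long as the columns stay closed. The paper builds two additive holomorphic characters $f_1,f_2$ from the adjoint action, as $g\mapsto\varphi(\Ad(g)X-X)$, and takes fields with $X_af_b=\delta_{ab}$. It then multiplies the second column by $e^{f_1}$; the column stays closed because $X_2f_1=0$. For $u=k\circ\Phi$ with $\Phi=(e^{f_1},f_2)$, which avoids the origin, the contraction in \eqref{eq:cofactor-harmonic} is a positive multiple of $(k_{z\bar z}+k_{w\bar w})\circ\Phi$. So $k=(|z|^2+|w|^2)^{-1}$ works, imitating the affine group on a group with no affine quotient.

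Your Case~1, by contrast, is a workable alternative to the paper's Poisson transform on $S/K_S$, after one correction. The orbit map $g\mapsto\pi(g)e$ intertwines right invariant fields, not left invariant ones, with the linear fields $\pi(Y)$. The left invariant operator in \eqref{eq:frame-harmonic} differs from its right invariant counterpart, since at $g$ the two symbols differ by the nonunitary $\Ad(g)$. Use $v(g)=\|\pi(g)^{-1}e\|^{2s}$ instead; inversion exchanges the two kinds of fields up to sign. Take $B$ a basis of a compact real form $\mathfrak k$, orthonormal for minus the Killing form, with $\pi(K)$ unitary, and note $\pi(S)e=\R_{>0}\,\pi(K)e$. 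The Casimir then gives the factor $s\bigl(s|\Lambda|^2+2\langle\Lambda,\delta\rangle\bigr)$, where $\delta$ is the half sum of positive roots. Hence $s=-2\langle\Lambda,\delta\rangle/|\Lambda|^2<0$ works for any nonzero highest weight $\Lambda$. The resulting positive function is unbounded, which Proposition~\ref{prop:group-lift} allows.
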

The proof of Proposition~\ref{prop:nonabelian} distinguishes the
semisimple quotient, the nilpotent case, and the solvable case which is
not nilpotent.
\begin{lemma}\label{lem:poisson}
For a connected complex semisimple Lie group $S$ of positive dimension,
there are a complex basis $B$ and a smooth nonconstant real function
$v$, $1\leq v\leq2$, satisfying \eqref{eq:frame-harmonic}.
\end{lemma}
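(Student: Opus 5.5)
The plan is to reduce \eqref{eq:frame-harmonic} to the Laplace--Beltrami equation on the Riemannian symmetric space $S/K$, where $K$ is a maximal compact subgroup, and then take $v$ to be a shifted Poisson integral of boundary data. First I interpret the left side of \eqref{eq:frame-harmonic}. The columns of $L_SB$ are the coordinate coefficients of the left invariant holomorphic vector fields $X_a$ determined by the basis $B$. Since $X_a$ has holomorphic coefficients, it commutes with $\bar\partial$-type differentiation of its coefficients, and
\[
\sum_{i,j}(L_SB)_{ia}\overline{(L_SB)_{ja}}v_{z_i\bar z_j}=X_a\bar X_av .
\]
Write $X_a=\tfrac12(\xi_a-\ii J\xi_a)$ with $\xi_a=2\Re X_a$. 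The real fields $\xi_a$ and $J\xi_a$ commute, because $X_a$ is holomorphic. This gives
\[
X_a\bar X_a=\tfrac14\bigl(\xi_a^2+(J\xi_a)^2\bigr).
\]
Here $\xi_a$ is the left invariant real field of $e_a\in\mathfrak s$, and $J\xi_a$ is that of $\ii e_a$.

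Next I choose the basis $B$. Since $S$ is complex semisimple, $\mathfrak s=\mathfrak k\oplus\ii\mathfrak k$, where $\mathfrak k$ is a compact real form and the Lie algebra of a maximal compact subgroup $K$. I take $B=(e_1,\dots,e_m)$ to be a basis of $\mathfrak k$, orthonormal for minus the Killing form of $\mathfrak k$; it is then a complex basis of $\mathfrak s$. The operator in \eqref{eq:frame-harmonic} becomes $\tfrac14\sum_a(E_a^2+(\ii E_a)^2)$, written with left invariant real fields. Now restrict to functions invariant under right translation by $K$. For such $v$ we have $E_av(g)=\frac{d}{dt}v(g\exp tE_a)=0$, so the $E_a^2$ terms vanish. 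The remaining operator $\sum_a(\ii E_a)^2$ runs over an orthonormal basis of $\mathfrak p=\ii\mathfrak k$. On right $K$-invariant functions this is the lift of the Laplace--Beltrami operator of $S/K$ for the $G$-invariant metric induced by the Killing form; this is the standard Casimir identity for symmetric spaces. So it suffices to find a bounded nonconstant harmonic function on $S/K$.

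Since $\dim_\C S>0$, the space $S/K$ is a Riemannian symmetric space of noncompact type of positive dimension. Take a smooth nonconstant function $\varphi$ on the Furstenberg boundary $K/M$ with $0\leq\varphi\leq1$, and let $h$ be its Poisson integral,
\[
h(gK)=\int_{K/M}\varphi\,d\nu_{gK},
\]
where $\nu_{gK}$ are the harmonic measures. Then $h$ is smooth, satisfies $0\leq h\leq1$, and is annihilated by every $S$-invariant differential operator, in particular by the Laplace--Beltrami operator. It is nonconstant because $h(g_tK)\to\varphi$ along suitable sequences converging to boundary points, by Furstenberg's boundary theory. Set $v=1+h\circ\mathrm{pr}$, where $\mathrm{pr}:S\to S/K$ is the projection. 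Then $1\leq v\leq2$, $v$ is smooth and nonconstant, and the two preceding steps give \eqref{eq:frame-harmonic}.

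I expect the main obstacle to be the identification in the second step. One must check that no first order cross terms survive and that the sum over $\mathfrak p$ is exactly the lifted Laplace--Beltrami operator rather than merely an operator with the same principal symbol. If this is hard to cite cleanly, I would argue more directly: $\Delta_{\mathfrak p}=\sum_a(\ii E_a)^2$ equals the Casimir operator of $\mathfrak s_\R$ minus $\sum_aE_a^2$. The Casimir operator is bi-invariant and descends to an $S$-invariant operator on $S/K$. Poisson integrals are eigenfunctions of all such operators with the eigenvalue of the trivial character, which is $0$. This route needs only that the Poisson integrals are joint eigenfunctions.
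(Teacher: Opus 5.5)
Your proposal is correct and takes essentially the same route as the paper. Both arguments use a basis of the compact real form that is orthonormal for minus the Killing form, and the identity $X_a\bar X_a=\tfrac14(\xi_a^2+(J\xi_a)^2)$ for holomorphic left invariant fields. Both then kill the $\mathfrak k$-terms on right $K$-invariant functions and take a shifted Poisson integral on $S/K$, which is nonconstant because of its boundary limits.

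Your fallback via the bi-invariant Casimir operator is exactly how the paper identifies the remaining operator with the invariant Laplacian. The choice of Killing form normalization changes only a positive constant.
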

\begin{proof}
Choose a compact real form $K_S$. If $\mathfrak k_S$ is its real
Lie algebra, then $\mathfrak s_\R=\mathfrak k_S\oplus\ii\mathfrak k_S$.
Here $\mathfrak s_\R$ is $\mathfrak s$ regarded as a real Lie algebra,
and $\oplus$ denotes a direct sum of real vector spaces.
Take a real basis $Y_a$ of $\mathfrak k_S$, orthonormal for minus
the restricted real Killing form; it is a complex basis of
$\mathfrak s$. Let $X_a$ be the associated real left invariant
fields, and $JX_a$ those associated to $\ii Y_a$.
On functions invariant under right translations by $K_S$, the real Casimir operator
\[
\mathcal C=\sum_a\big((JX_a)^2-X_a^2\big)
\]
induces the invariant Laplacian on $S/K_S$, up to a positive
constant. This symmetric space has nontrivial Furstenberg boundary.
The Poisson transform of a smooth nonconstant boundary function
between one and two is smooth harmonic with the same bounds. Its
boundary limit along a regular Weyl chamber direction recovers
that function, so the transform is nonconstant
\cite{Furstenberg,KnappWilliamson,Johnson}.
Lift it to $S$ to obtain $v$. Since $v$ is invariant under right
translations by $K_S$, we have $X_av=0$. Since $v$ is harmonic,
$\mathcal Cv=0$. Hence
$\sum_a(X_a^2+(JX_a)^2)v=0$.
For a holomorphic vector field $Y=\sum_i f_i\partial_{z_i}$,
with real normalization $Y^\R=Y+\bar Y$, the coefficients $f_i$ are
holomorphic. The differentiated coefficient terms cancel, giving
\[
(Y^\R)^2v+(\ii Y)^\R{}^2v
=4\sum_{i,j}f_i\bar f_jv_{z_i\bar z_j}.
\]
Summing over the basis proves \eqref{eq:frame-harmonic}.
\end{proof}
The solvable radical $\rad\mathfrak g$ is the largest solvable ideal
of $\mathfrak g$.
If $\mathfrak g$ is not solvable, its connected solvable radical is
a closed complex normal subgroup $R$, and $S=G/R$ is a connected
complex semisimple group of positive dimension. Its Lie algebra is
$\mathfrak g/\rad\mathfrak g$, and the quotient map is a
holomorphic submersion \cite[Chapter~10]{SagleWalde}; see also
\cite[\S2]{Gilligan}. Lemma~\ref{lem:poisson} and
Proposition~\ref{prop:group-lift} prove
Proposition~\ref{prop:nonabelian} in this case.

For subspaces $\mathfrak a,\mathfrak b\subset\mathfrak g$,
$[\mathfrak a,\mathfrak b]$ denotes the complex span of the Lie brackets
$[X,Y]$ with $X\in\mathfrak a$ and $Y\in\mathfrak b$.
Suppose next that $\mathfrak g$ is nonabelian and nilpotent. Put
$D=[\mathfrak g,\mathfrak g]$ and $D_3=[\mathfrak g,D]$.
Nilpotence gives $D_3\ne D$. Choose a complex linear functional
$\varphi$ vanishing on $D_3$ and nonzero on $D$, and choose
$H,E$ with $\varphi([H,E])=1$. The connected adjoint group acts
trivially on $D/D_3$. Thus
\[
f_H(g)=\varphi(\Ad(g)H-H),\qquad
f_E(g)=\varphi(\Ad(g)E-E)
\]
are additive holomorphic characters. Indeed,
\[
\Ad(gh)H-H=\Ad(g)(\Ad(h)H-H)+(\Ad(g)H-H),
\]
the first difference belongs to $D$, and $\varphi$ is invariant
there. Their differentials are
$df_H(U)=\varphi([U,H])$ and $df_E(U)=\varphi([U,E])$.
The map $f=(f_E,-f_H):G\to\C^2$ has identity differential on
the span of $H,E$. Its image is an open subgroup of $\C^2$,
so it is surjective.
Choose horizontal fields $X_a$ with $X_af_b=\delta_{ab}$,
and vertical fields for the remaining directions, using
\eqref{eq:split-columns} if desired. Put $\Phi=(e^{f_1},f_2)$.
The Kronecker symbol $\delta_{ab}$ is $1$ when $a=b$ and $0$ otherwise.
In the matrix used in Lemma~\ref{lem:volume-frame}, multiply the
second horizontal column by
$e^{f_1}$. The column remains closed because $X_2f_1=0$.
The two horizontal columns then have equal projected speed for
$\Phi$, while the vertical columns project to zero.
On $\C^2\setminus\{0\}$ the positive kernel
\begin{equation}\label{eq:radial-kernel}
k(z,w)=(|z|^2+|w|^2)^{-1}
\end{equation}
has zero complex Hessian trace: if $s=|z_1|^2+|z_2|^2$, then
$k_{z_i\bar z_j}=2s^{-3}\bar z_i z_j-s^{-2}\delta_{ij}$.
The map $\Phi$ avoids zero, so $u=k\circ\Phi$ is smooth
positive and satisfies \eqref{eq:cofactor-harmonic} with value
zero. The differential of $\Phi$ has rank two, so $u$ is nonconstant.
Lemma~\ref{lem:volume-frame} gives the required pair.

\begin{lemma}\label{lem:affine-quotient}
A finite dimensional complex solvable Lie algebra which is not
nilpotent has a surjective homomorphism onto $\aff(\C)$,
the two dimensional algebra with basis $H,E$ and $[H,E]=E$.
This is the Lie algebra of affine transformations $z\mapsto az+b$ of
the complex line, with $a\ne0$.
\end{lemma}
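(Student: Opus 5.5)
Let $\mathfrak g$ be solvable and not nilpotent. The plan is to find an element $H$ whose adjoint action is not nilpotent on the derived algebra. From the resulting Fitting decomposition we extract an eigenvector with nonzero eigenvalue in a suitable quotient. Then we take a further quotient in which only $H$ and that eigenvector survive.

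\textbf{Step 1: a non-nilpotent element acting on $D$.} Put $D=[\mathfrak g,\mathfrak g]$. Since $\mathfrak g$ is solvable, Lie's theorem (or Cartan's criterion) shows that $D$ is nilpotent. Suppose $\operatorname{ad}X$ were nilpotent on $D$ for every $X\in\mathfrak g$. Then the adjoint action of $\mathfrak g$ on $D$ is by nilpotent operators, and $\mathfrak g/D$ is abelian. Engel's theorem, applied to the action of $\mathfrak g$ on the flag $\mathfrak g\supset D\supset\cdots$, would then make $\mathfrak g$ nilpotent, a contradiction. More concretely: the action of $\mathfrak g$ on $D$ is by nilpotent operators, so $D$ has a $\mathfrak g$-stable flag with trivial action on the graded pieces. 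Together with $[\mathfrak g,\mathfrak g]\subset D$, this gives a central series for $\mathfrak g$. Hence some $H\in\mathfrak g$ has $\operatorname{ad}H|_D$ not nilpotent.

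\textbf{Step 2: an eigenvector modulo an ideal.} By Lie's theorem, $\mathfrak g$ acting on $D$ is simultaneously triangularizable. Its diagonal entries are weights $\lambda:\mathfrak g\to\C$, and these are linear functionals vanishing on $D$. Some weight $\lambda$ has $\lambda(H)\ne0$. Choose a $\mathfrak g$-stable flag of ideals $0=D_0\subset D_1\subset\cdots\subset D_k=D$ with one-dimensional quotients. Let $i$ be the first index whose quotient $D_i/D_{i-1}$ carries a nonzero weight $\lambda$. Pick $E\in D_i\setminus D_{i-1}$, so that $[X,E]\equiv\lambda(X)E$ modulo $D_{i-1}$ for all $X$. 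Normalize $\lambda(H)=1$.

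\textbf{Step 3: the quotient ideal.} This is the main obstacle: we need an ideal $\mathfrak n$ with $\mathfrak g=\C H\oplus\C E\oplus\mathfrak n$. Take $\mathfrak n=\ker\lambda\cap\mathfrak m$, where $\mathfrak m$ is a complement of $\C E$ containing $D_{i-1}$. The delicate point is that this complement must be an ideal. To get it, quotient first by a maximal ideal $\mathfrak a\subset D$ among those not containing $E$ modulo lower terms. Alternatively, use the decomposition $D=D^{0}\oplus D^{\lambda}\oplus\cdots$ into generalized weight spaces for $\operatorname{ad}H$, and take the ideal generated by all other generalized weight spaces together with $[D,D]$ and the non-$E$ part. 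The cleanest route is probably the following. Pass to $\mathfrak g/[D,D]$, in which $D$ is abelian. There $D$ is a $\mathfrak g/D$-module, decomposing into generalized weight spaces $D=\bigoplus_\mu D^\mu$, each of which is an ideal. Choose a $\mathfrak g$-stable hyperplane $W\subset D^\lambda$, using a codimension-one submodule, which exists by Lie's theorem applied to the dual. Then $\mathfrak g/(W\oplus\bigoplus_{\mu\ne\lambda}D^\mu)$ has $D$-image one-dimensional, spanned by $E$, with $[X,E]=\lambda(X)E$.

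\textbf{Step 4: conclusion.} Finally quotient by $\ker\lambda$ modulo $\C E$; this is an ideal because it contains the derived algebra $\C E$. The result is two-dimensional with $[H,E]=E$, i.e.\ it is $\aff(\C)$. The composite of these quotient maps is the required surjective homomorphism.
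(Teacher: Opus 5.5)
There is a genuine gap: your Step~3 does not follow from Steps~1 and~2. Step~1 only produces an $H$ with $\operatorname{ad}H$ non-nilpotent on $D$, and Step~2 selects a weight $\lambda$ of $\mathfrak g$ on $D$. But Step~3 works in $\mathfrak g/[D,D]$ and needs $D^\lambda\neq0$ there. That is, it needs $\lambda$, or at least some nonzero weight, to occur on the abelianization $D/[D,D]$. Nothing you proved gives this, and taking the \emph{first} index can select a weight that occurs only on $[D,D]$.

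For example, let $\mathfrak g=\Span(H,x,y,z)$ with $[H,x]=x$, $[H,y]=y$, $[H,z]=2z$, $[x,y]=z$. Then $D=\Span(x,y,z)$ and $[D,D]=\C z$. Every $\mathfrak g$-stable flag of $D$ starts with $\C z$, so Step~2 gives $E=z$ with $\lambda(H)=2$. However, only the weight with value $1$ at $H$ occurs on $D/[D,D]$. So $E$ dies, $D^\lambda=0$, and Step~3 produces nothing.

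The missing statement is that $\mathfrak g/[D,D]$ is still not nilpotent, i.e.\ some $\operatorname{ad}X$ is non-nilpotent on $D/[D,D]$. This is the substantive point of the lemma, and the paper proves it. Take the lower central series $D=\gamma_1\supset\gamma_2=[D,D]\supset\gamma_3=[D,\gamma_2]\supset\cdots$. The graded algebra $\bigoplus_k\gamma_k/\gamma_{k+1}$ is generated by $\gamma_1/\gamma_2$, and $\operatorname{ad}X$ induces a derivation of it. Hence its eigenvalues on the $k$th layer are sums of $k$ eigenvalues on $D/[D,D]$. Nilpotence on the abelianization would therefore force nilpotence on $D$, contradicting your Step~1. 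Once you have this, choose $\lambda$ directly as a nonzero weight on $D/[D,D]$; the flag of Step~2 is then unnecessary.

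Step~4 is also wrong as stated. Quotienting by $\ker\lambda$ leaves a one-dimensional algebra. What you need is an ideal $\mathfrak n$ with $\ker\lambda=\C E\oplus\mathfrak n$, and ``it contains the derived algebra'' cannot be the reason, since $\mathfrak n$ must avoid $E$. An arbitrary complement of $\C E$ is not an ideal, because $[H,Y]\in\C E$ is in general nonzero for $Y\in\ker\lambda$.

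Instead, in your final quotient $\mathfrak q$ write $[H,X]=\beta(X)E$. The Jacobi identity, together with $[X,E]=\lambda(X)E$ and $\lambda(E)=0$, gives $\beta([X,Y])=\lambda(X)\beta(Y)-\lambda(Y)\beta(X)$. So $X\mapsto\lambda(X)H+\beta(X)E\in\aff(\C)$ is a homomorphism sending your $H,E$ to the basis $H,E$ of $\aff(\C)$, hence surjective. Its kernel $\ker\lambda\cap\ker\beta$, which is the centralizer of $H$ in $\ker\lambda$, is the ideal you want. This is exactly the paper's map $(\chi,\beta)$, with your codimension-one submodule $W$ playing the role of the paper's covector $b$. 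After both repairs, your route via generalized weight spaces on $D/[D,D]$ is essentially the paper's proof.
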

\begin{proof}
Put $D=[\mathfrak g,\mathfrak g]$ and $E=[D,D]$.
The notation $\operatorname{ad}(x)$ denotes the linear map $y\mapsto[x,y]$.
Lie's theorem makes $D$ nilpotent. If $\mathfrak g/E$ were
nilpotent, each $\operatorname{ad}(x)|_D$ would be nilpotent on
$D/[D,D]$. A derivation of a nilpotent algebra which is
nilpotent on its abelianization is nilpotent: the lower central
graded algebra is generated by the abelianization, so the
induced maps on all bracket layers have only zero eigenvalues.
The finite filtration then gives nilpotence on $D$.
Since $\mathfrak g/D$ is abelian, every $\operatorname{ad}(x)$
would be nilpotent, contrary to Engel's theorem.
Consequently $Q=\mathfrak g/E$ is a nonnilpotent metabelian
algebra. Its actions on $I=[Q,Q]$ commute. Simultaneous
triangularization gives a nonzero covector $b\in I^*$ and
a nonzero character $\chi\in Q^*$ such that
\[
b([x,d])=\chi(x)b(d)\qquad(x\in Q,\ d\in I).
\]
The spaces $I^*$ and $Q^*$ are the complex linear duals of $I$ and $Q$.
If all joint weights were zero, Engel's theorem would make $Q$
nilpotent. Choose $a$ with $\chi(a)=1$ and set
$\beta(x)=b([a,x])$. Jacobi's identity gives
\[
\chi([x,y])=0,\qquad
\beta([x,y])=\chi(x)\beta(y)-\chi(y)\beta(x).
\]
Moreover $\beta(a)=0$ and $\beta|_I=b$.
Hence $(\chi,\beta):Q\to\aff(\C)$ is surjective.
The Lie and Engel theorems used here are recalled in
\cite{Humphreys}.
\end{proof}
For a nonnilpotent solvable $\mathfrak g$, let
$p:\mathfrak g\to\aff(\C)$ be this quotient. The connected
adjoint group preserves its kernel. On the quotient, every
automorphism has the form
$H\mapsto H-bE$, $E\mapsto aE$, $a\ne0$.
These coefficients give global holomorphic functions satisfying
\[
a(gh)=a(g)a(h),\qquad b(gh)=b(g)+a(g)b(h).
\]
Their differential is $(\chi,\beta)$, so
$(a,b):G\to\operatorname{Aff}(\C)=\C^*\ltimes\C$ is a
surjective holomorphic submersion. Here
$(a,b)(a',b')=(aa',b+ab')$.
Here $\C^*=\C\setminus\{0\}$, and $\operatorname{Aff}(\C)$ is the group
of affine transformations $z\mapsto az+b$, $a\in\C^*$, $b\in\C$.
The symbol $\ltimes$ denotes a semidirect product; here $\C^*$ acts on
the additive group $\C$ by multiplication.
Choose horizontal left invariant fields projecting to
$a\partial_a$ and $a\partial_b$ and vertical fields projecting
to zero. Their projected Hermitian matrix is $|a|^2I_2$.
The function $u=(|a|^2+|b|^2)^{-1}$ has zero Hessian
contraction by \eqref{eq:radial-kernel}. It is positive and
nonconstant. Lemma~\ref{lem:volume-frame} therefore gives the two
conformal Gauduchon metrics.
This completes the proof of Proposition~\ref{prop:nonabelian}.

\subsection{Compact subgroups and the abelian case}
We denote the Lie group exponential map by $\exp:\mathfrak g\to G$.
\begin{lemma}\label{lem:central}
For every compact subgroup $K$ of a connected complex Lie group,
$\mathfrak k\cap\ii\mathfrak k$ lies in the center of
$\mathfrak g$. If the group is nonabelian, then
$\dim_\C(\mathfrak k\cap\ii\mathfrak k)\leq n-2$.
\end{lemma}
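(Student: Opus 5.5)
The idea is to use the fact that holomorphic functions on a compact connected complex manifold are constant, applied to the orbit map of the subgroup generated by $\exp(\mathfrak k\cap\ii\mathfrak k)$ acting by conjugation. Set $\mathfrak c=\mathfrak k\cap\ii\mathfrak k$, a complex subspace of $\mathfrak g$ which is also a real Lie subalgebra, since it is an intersection of two real subalgebras. It is closed under brackets, and the bracket is complex bilinear, so $\mathfrak c$ is a complex Lie subalgebra.

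First I treat the centrality claim. Fix $Y\in\mathfrak g$ and consider the holomorphic map
\[
F:\mathfrak c\to\mathfrak g,\qquad F(X)=\Ad(\exp X)Y .
\]
It is holomorphic because $\exp$ and $\Ad$ are holomorphic for a complex Lie group. Since $\mathfrak c\subset\mathfrak k$, the image $\exp(\mathfrak c)$ lies in the compact group $K$. Hence $F(\mathfrak c)\subset\Ad(K)Y$, which is compact, so $F$ is bounded.

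A bounded holomorphic map on the vector space $\mathfrak c\cong\C^d$ is constant by Liouville's theorem. Therefore $\Ad(\exp X)Y=Y$ for all $X\in\mathfrak c$. Differentiating at $X=0$ in the direction $X$ gives $[X,Y]=0$. Since $Y$ was arbitrary, $\mathfrak c$ lies in the center $\mathfrak z(\mathfrak g)$.

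For the dimension bound, suppose $G$ is nonabelian, so $\mathfrak z=\mathfrak z(\mathfrak g)\neq\mathfrak g$. By the first part, $\dim_\C\mathfrak c\leq\dim_\C\mathfrak z$. It therefore suffices to show $\dim_\C\mathfrak z\leq n-2$, i.e.\ that the center cannot have codimension one. Suppose $\mathfrak z$ had codimension one, and pick $W\notin\mathfrak z$, so $\mathfrak g=\mathfrak z\oplus\C W$. Every bracket would then be $[a+sW,b+tW]=st[W,W]=0$, so $\mathfrak g$ would be abelian. This contradicts the assumption, so the center has codimension at least two and $\dim_\C\mathfrak c\leq n-2$.

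The only point needing care is the first step: justifying that $\exp(\mathfrak c)\subset K$ and that $F$ is holomorphic in $X\in\mathfrak c$. The former holds because $\exp$ maps $\mathfrak k$ into $K$. The latter holds because $\mathfrak c$ is a complex subspace on which $\exp$ restricts holomorphically. I expect no genuine obstacle beyond these checks.
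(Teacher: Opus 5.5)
Your proof is correct and follows the paper's argument: Liouville applied to the bounded entire map $X\mapsto\Ad(\exp X)$ on the complex directions of $\mathfrak k$ gives centrality, and a center of codimension at most one forces $\mathfrak g$, and hence the connected group, to be abelian. One harmless slip: $\ii\mathfrak k$ is generally not a real subalgebra, so your stated reason for $\mathfrak c$ being a subalgebra is wrong, but that claim is never used in the proof.
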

\begin{proof}
For $X,\ii X\in\mathfrak k$, the elements
$\exp(zX)=\exp((\Re z)X)\exp((\Im z)\ii X)$ belong to
$K$ for all $z\in\C$. The entire matrix valued function
$\Ad(\exp(zX))$ is bounded, hence constant. Its derivative
at zero is $\operatorname{ad}(X)$, so $X$ is central.
An algebra whose center has codimension at most one is
abelian: write every vector as a central vector plus a
multiple of one fixed vector. All brackets vanish.
An abelian Lie algebra gives an abelian connected Lie group
by local commutativity and connectedness.
\end{proof}
For completeness, compact complex tangent directions also lie in
the solvable radical. Thus
\[
\dim_\C(\mathfrak g/\rad\mathfrak g)
\leq n-\dim_\C(\mathfrak k\cap\ii\mathfrak k).
\]
A nonzero semisimple complex Lie algebra has dimension at least
three, because every algebra of dimension at most two is solvable.

Write a connected abelian complex Lie group as $G=\C^n/\Lambda$,
where $\Lambda$ is a discrete subgroup with real independent
generators. Set $W=\Span_\R\Lambda$, $H=W\cap\ii W$,
and $q=\dim_\C H$. Its maximal compact subgroup is $W/\Lambda$,
with complex tangent $H$ \cite[\S2]{Kazama,KKO}.
Noncompactness implies $q\leq n-1$.
Here $\Span_\R\Lambda$ is the real linear span of $\Lambda$, and
$[z]$ denotes the coset $z+\Lambda$ in $\C^n/\Lambda$.
\begin{samepage}
\begin{lemma}\label{lem:period-normal}
After a complex linear coordinate change, one of the following
holds:
\begin{enumerate}
\item $q=n-1$ and $W=\{z_1=0\}$;
\item $q=n-1$ and $W=\{\Re z_1=0\}$;
\item $q\leq n-2$ and $\Re z_1=\Re z_2=0$ on $W$.
\end{enumerate}
In the first two cases, respectively, $[z]\mapsto z_1$ and
$[z]\mapsto\Re z_1$ are proper surjections onto $\C$ and $\R$.
\end{lemma}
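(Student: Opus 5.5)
We would prove Lemma~\ref{lem:period-normal} by linear algebra on the real subspace $W\subset\C^n$ and its maximal complex subspace $H=W\cap\ii W$. Nothing from the analytic part of the paper is needed; the only input is that $G=\C^n/\Lambda$ is noncompact. Set $k=\dim_\R W$. Since the generators of $\Lambda$ are real independent, $\Lambda$ is a lattice in $W$, so $W/\Lambda$ is compact and $G\cong (W/\Lambda)\times(\C^n/W)$ as real groups. Hence noncompactness forces $W\ne\C^n$, i.e.\ $k\leq 2n-1$. We also have $W+\ii W\subset\C^n$, so $\dim_\R(W+\ii W)=2k-2q\leq 2n$.

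The plan is to split on $q$. First take $q=n-1$. Then $W\supset H$ with $\dim_\R H=2n-2$ and $k\in\{2n-2,2n-1\}$. Choose complex coordinates with $H=\{z_1=0\}$. If $k=2n-2$, then $W=H=\{z_1=0\}$, which is case (1). If $k=2n-1$, then $W=H\oplus\R v$ for some $v$ with $z_1(v)\ne0$. Rescaling $z_1$ by a complex constant, we arrange $z_1(v)=\ii$, so $W=\{\Re z_1=0\}$, which is case (2).

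Now take $q\leq n-2$. We need two complex linear functionals $\ell_1,\ell_2$, independent over $\C$, whose real parts vanish on $W$. Consider the real annihilator $W^\circ=\{\lambda\in(\C^n)^*_\R:\lambda|_W=0\}$, of real dimension $2n-k$. Every real functional is $\Re\ell$ for a unique complex functional $\ell$. Then $\{\ell:\Re\ell|_W=0\}$ is a real subspace of dimension $2n-k$, and the complex functionals vanishing on $W+\ii W$ form a complex subspace of dimension $n-(k-q)$. We need the complex span of our real space to have complex dimension at least $2$. The real space has dimension $2n-k$, and its intersection with $\ii$ times itself consists of $\ell$ vanishing on $W+\ii W$ (since $\Re(\ii\ell)=-\Im\ell$). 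This intersection has real dimension $2(n-k+q)$, so the complex span has dimension $(2n-k)-(n-k+q)=n-q\geq2$. Pick $\ell_1,\ell_2$ in the real space that are $\C$-independent, complete them to coordinates $z_j=\ell_j$, and we get case (3).

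Finally, the properness claims. In case (1), $z_1$ vanishes on $\Lambda\subset W$, so $[z]\mapsto z_1$ is well defined, surjective, and its fibers are translates of $\C^{n-1}/\Lambda$ with $\Span_\R\Lambda=W=\C^{n-1}$, hence compact tori. Properness follows because the preimage of a compact $K\subset\C$ is the image of $K\times F$ for a compact fundamental domain $F$ of $\Lambda$ in $W$. Case (2) is handled the same way, using $\Re z_1$ vanishing on $W$ and a fundamental domain for $\Lambda$ in $W$. The only mildly delicate point is the dimension count in case (3); we expect the rest to be routine.
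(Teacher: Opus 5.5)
Your argument is correct. The properness part matches the paper's: a linear section together with a compact fundamental region for $\Lambda$ in $W$.

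The classification part takes a different, essentially dual, route. The paper observes that the image of $W$ in $\C^n/H$ is totally real, so a real basis of that image is complex independent. Adjoining a complex basis of $H$ then gives the single normal form $W=\C^q\oplus\ii\R^k\oplus\{0\}$ with $0\leq k\leq n-q$; there $k$ counts imaginary directions, not $\dim_\R W$. All three alternatives are read off from this at once. Case (3) holds because $W$ has zero real part in each of the $n-q\geq2$ coordinates outside $\C^q$.

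You instead treat $q=n-1$ by hand. For $q\leq n-2$ you count dimensions in $S=\{\ell:\Re\ell|_W=0\}$. Your computation $\dim_\C(S+\ii S)=n-q$ shows that $S$ spans the complex annihilator of $H$ over $\C$: $S$ lies in that annihilator, and both have complex dimension $n-q$. This is the dual form of the paper's total reality observation.

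Comparing the two: the paper's version is shorter and describes $W$ explicitly in every case. Yours avoids building a basis adapted to $W$ and produces directly the two coordinates needed in case (3), at the cost of handling $q=n-1$ separately.
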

\end{samepage}
\begin{proof}
The image of $W$ in $\C^n/H$ is totally real, meaning that
its intersection with its multiple by $\ii$ is zero. Its real
basis is complex linearly independent. Adjoining a complex
basis of $H$ and extending to a basis of $\C^n$ gives
$W=\C^q\oplus\ii\R^k\oplus\{0\}$, $0\leq k\leq n-q$.
This proves the alternatives. In the first two cases the
projection has kernel $W$. Choose a linear section $s$ and
generators $p_1,\ldots,p_r$ of $\Lambda$.
The inverse image of a compact base set $D$ is the image of
\[
\{s(b)+\sum_jt_jp_j:b\in D,\ 0\leq t_j\leq1\},
\]
which is compact. The section gives surjectivity.
\end{proof}
When $q=n-1$, the proper projections in Lemma~\ref{lem:period-normal}
satisfy Corollary~\ref{cor:proper-projection}, which gives the Levi
condition and uniqueness in Theorem~\ref{thm:levi}. When $q\leq n-2$, put
$x=\Re z_1$, $y=\Re z_2$. These functions and the forms
$dz_j$ descend to $G$. Define a metric and density by
\begin{equation}\label{eq:abelian-pair}
G_0=\begin{pmatrix}1&-y\\-y&1+y^2\end{pmatrix}
\oplus I_{n-2},\qquad \rho=e^{-x+y^2/2}.
\end{equation}
For matrices, $\oplus$ denotes the block diagonal sum; the identity
block is absent when $n=2$.
The matrix is positive with determinant one. Its adjugate has
first block $\left(\begin{smallmatrix}1+y^2&y\\y&1\end{smallmatrix}\right)$,
and $\mathcal D(\adj G_0)=0$.
The four nonzero terms of $\mathcal D(\rho\adj G_0)$ are
$\tfrac14(1+y^2)\rho$,
$-\tfrac14(1+y^2)\rho$,
$-\tfrac14(1+y^2)\rho$,
$\tfrac14(1+y^2)\rho$. The other terms vanish, so both
$G_0$ and $\rho^{1/(n-1)}G_0$ are Gauduchon.
The values of $\rho$ at $[0]$ and $[(1,0,\ldots,0)]$ are
$1$ and $e^{-1}$, so scaling does not identify them.

\phantomsection\label{proof:lie-gauduchon}
\begin{proof}[Proof of Theorem~\ref{thm:lie}: conditions (2) and (4)]
If \eqref{eq:lie-criterion} holds, Lemma~\ref{lem:central}
makes $G$ abelian. Lemma~\ref{lem:period-normal} and
Corollary~\ref{cor:proper-projection} give uniqueness.
Conversely, uniqueness in every Hermitian conformal class excludes
nonabelian groups by Proposition~\ref{prop:nonabelian}. In the abelian
case, \eqref{eq:abelian-pair} excludes $q\leq n-2$, so
$q=n-1$.
\end{proof}
For these groups the condition is also equivalent to the assertion that
$P_\alpha$ is critical for every metric: the proper projections prove one
direction, and Proposition~\ref{prop:pinchover} proves the other.
This assertion needs no assumption on the number of ends.

\subsection{Examples}\label{sec:lie-examples}
If $T$ is a compact complex torus of dimension $n-1$, both
$T\times\C$ and $T\times\C^*$ satisfy the criterion.
For the elliptic curve $E=\C/(\Z+\ii\Z)$,
$\C\times E$ has uniqueness in every conformal class,
whereas $(\C^*)^2$ admits \eqref{eq:abelian-pair}.
Both underlying real manifolds are homeomorphic to
$\mathbb T^2\times\R^2$, where $\mathbb T^r=(\R/\Z)^r$.
Likewise $\C\times E^2$ and $(\C^*)^2\times E$ are
homeomorphic to $\mathbb T^4\times\R^2$, but their maximal
compact subgroups have CR dimensions two and one. Only the
first has uniqueness in every Hermitian conformal class.

\begin{example}\label{ex:cousin}
Let
\[
\Lambda=\Z(1,0)+\Z(\ii,\sqrt2)+\Z(0,1),
\qquad G=\C^2/\Lambda.
\]
Then $f([z_1,z_2])=\Im z_2$ is proper and pluriharmonic,
and $G$ has uniqueness in every Hermitian conformal class.
Every holomorphic function on $G$ is constant.
\end{example}
\begin{proof}
For $s\in\R$, $s\bmod\Z$ denotes its class in $\R/\Z$.
The real coordinates
\[
(\Re z_1\bmod\Z,\ \Im z_1\bmod\Z,
(\Re z_2-\sqrt2\Im z_1)\bmod\Z,\ \Im z_2)
\]
identify $G$ with $\mathbb T^3\times\R$, proving properness.
Here $W=\{\Im z_2=0\}$ and $H=\{z_2=0\}$, so
Theorem~\ref{thm:lie} applies. The period criterion
\cite[Proposition~2.5(1)]{Battisti} requires
$k\ii+\ell\sqrt2\notin\Z$ for every nonzero
$(k,\ell)\in\Z^2$, which follows from its imaginary part
if $k\ne0$ and from irrationality if $k=0$.
A translation quotient with no nonconstant holomorphic
functions is called a Cousin group.
\end{proof}
The complex subgroup tangent to $H$ need not be closed in the
maximal compact subgroup. The proof uses compactness of the
whole real fiber.

For the affine group on $\C^2$ with multiplication
$(z,w)(z',w')=(z+z',w+e^z w')$, an explicit pair is given by
\begin{equation}\label{eq:affine-pair}
G_0=\begin{pmatrix}1&-w\\-\bar w&1+|w|^2\end{pmatrix},
\qquad \rho=e^{|w|^2-2\Re z}.
\end{equation}
Its determinant is one. The four terms of
$\mathcal D(\rho\adj G_0)$ are
$(1+|w|^2)\rho$, $-(1+|w|^2)\rho$,
$-(1+|w|^2)\rho$, $(1+|w|^2)\rho$, and cancel.
Thus $G_0$ and $\rho G_0$ are Gauduchon and are not related
by scaling. The same pair on $\C^2$ will show why an intact
fiber is needed in Theorem~\ref{thm:fibers}.
For $\mathrm{SL}(2,\C)$ the function
$u\left(\begin{smallmatrix}a&b\\c&d\end{smallmatrix}\right)
=(|c|^2+|d|^2)^{-2}$ is positive nonconstant and has zero
Hessian contraction for the standard Cartan basis.
Here $\mathrm{SL}(2,\C)$ is the group of complex $2\times2$ matrices
with determinant one.
Lemma~\ref{lem:volume-frame} therefore gives two conformal Gauduchon
metrics which are not related by scaling.

\section{The ends and geometric applications}\label{sec:ends}
We now allow some compact fibers to lose points. The boundary potential
penalizes escape inside those fibers; a single intact fiber then turns
the constant base minimum into a minimum attained in the manifold.

\begin{definition}\label{def:completion}
Let $B$ be a point, $\R$, or a parabolic Riemann surface.
A \emph{compactification with compact fibers over $B$} consists of an
open embedding $M\subset X$ into a Hausdorff space and a proper
continuous map $\bar\pi:X\to B$. Its restriction $\pi:M\to B$
is holomorphic for a curve base and pluriharmonic for a real base.
The closed set $E=X\setminus M$ is called the \emph{added boundary}.
A \emph{plurisubharmonic potential at
the added boundary} is a nonpositive upper semicontinuous
$\Phi:X\to[-\infty,0]$ with $\Phi=-\infty$ on $E$, whose
finite points in $M$ are dense and meet every fiber.
For a point base, require $\Phi|_M$ to be plurisubharmonic on
$N\cap M$ for some open neighborhood $N$ of $E$.
For either nonconstant base, require also that
$U=\{x\in M:\Phi(x)>-\infty\}$ be open, that
$\psi=\Phi|_U$ be smooth, and that some open $N\subset X$
contain both the pole set $\{\Phi=-\infty\}$ and the critical
points of $\pi$, with $\psi$ plurisubharmonic on $N\cap U$.
A point is critical when the differential of $\pi$ vanishes.
A completed fiber over $s_0\in B$ is \emph{intact} if
$E\cap\bar\pi^{-1}(s_0)=\varnothing$, or equivalently
$\bar\pi^{-1}(s_0)\subset M$. The condition that there is an intact fiber
requires at least one such $s_0$.
\end{definition}

\subsection{Fiber minima}
We first treat the singular potential needed for the point base.
\begin{lemma}\label{lem:point}
Suppose $\psi:M\to[-\infty,0]$ is upper semicontinuous and
finite on a dense set, every upper level $\{\psi\geq-R\}$
is compact for $R>0$, and $\psi$ is plurisubharmonic outside
a compact set. Then every positive smooth supersolution
$P_\alpha u\geq0$ is constant.
\end{lemma}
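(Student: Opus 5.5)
We use a penalization argument. Fix a positive smooth $u$ with $P_\alpha u\geq0$, that is, $L_\alpha u\leq0$. For $\varepsilon>0$ consider
\[
u_\varepsilon=u-\varepsilon\psi,
\]
which takes values in $(0,\infty]$ because $\psi\leq0$. Since $u$ is continuous and $-\psi$ is lower semicontinuous, $u_\varepsilon$ is lower semicontinuous. Its sublevel sets $\{u_\varepsilon\leq c\}$ lie in $\{\psi\geq-c/\varepsilon\}$, which is compact by hypothesis. Hence $u_\varepsilon$ attains a finite minimum $m_\varepsilon$ at some point $x_\varepsilon\in M$ with $\psi(x_\varepsilon)>-\infty$. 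As $\varepsilon\to0$, the values $m_\varepsilon$ decrease to $m=\inf u$: for any $y$ with $\psi(y)$ finite we have $m_\varepsilon\leq u(y)-\varepsilon\psi(y)\to u(y)$, and such $y$ are dense. So the goal is to show that $u$ attains its infimum. The strong minimum principle for $P_\alpha$ (\cite[Theorem~3.5]{GT}) then makes $u$ constant.

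Let $K$ be the compact set outside of which $\psi$ is plurisubharmonic. The key step is to show that a minimizer $x_\varepsilon$ can be chosen in $K$, or else that $u$ already attains its minimum. Suppose $x_\varepsilon\notin K$. Near $x_\varepsilon$ the function $\varepsilon\psi$ is plurisubharmonic, so $\tr_\alpha(\ii\ddbar(\varepsilon\psi))\geq0$ in the sense of distributions. Combined with $L_\alpha u\leq0$, this gives $L_\alpha u_\varepsilon\leq0$, so $u_\varepsilon$ is a lower semicontinuous supersolution for $P_\alpha$, in the viscosity sense of \cite{HL}, on the connected component $\Omega$ of the open set $M\setminus K$ containing $x_\varepsilon$. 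It attains an interior minimum there, so the strong minimum principle for viscosity supersolutions gives $u_\varepsilon\equiv m_\varepsilon$ on $\Omega$.

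There are two possibilities for $\Omega$. If $\overline\Omega$ meets $K$, then by lower semicontinuity $u_\varepsilon\leq m_\varepsilon$ at a point of $\partial\Omega\subset K$, so a minimizer lies in $K$. Otherwise $\Omega$ is a whole component of $M$; since $M$ is connected, $\Omega=M$ and $K=\varnothing$. In that case $u_\varepsilon$ is constant on $M$ while $\psi=-\infty$ on a set we may ignore, and letting $\varepsilon\to0$ along points with finite $\psi$ shows that $u$ is constant directly.

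In the main case, take minimizers $x_\varepsilon\in K$ and pass to a subsequence $x_\varepsilon\to x_*\in K$. We have $u(x_\varepsilon)\leq u_\varepsilon(x_\varepsilon)=m_\varepsilon\to m$, so $u(x_*)\leq m$ by continuity. Thus $u(x_*)=m$, $u$ attains its global minimum, and $u$ is constant.

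The main obstacle is the second step. Because $\psi$ is only upper semicontinuous and may take the value $-\infty$, the inequality $L_\alpha(-\varepsilon\psi)\leq0$ must be used in a weak sense, and we need a strong minimum principle for lower semicontinuous supersolutions. I would first try to avoid this by smoothing: locally on $M\setminus K$, approximate $\psi$ from above by decreasing smooth plurisubharmonic functions (\cite[\S2.9]{Klimek}). Then touch $u_\varepsilon$ from below at $x_\varepsilon$ by a smooth test function and apply the classical strong minimum principle. Failing that, I would invoke the equivalence of viscosity and distributional inequalities in \cite{HL}.
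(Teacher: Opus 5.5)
Your argument is correct, but it is organized differently from the paper's proof.

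\textbf{The paper's route.} It enlarges $K$ to be nonempty and sets $m=\min_Ku$. It truncates the potential to the finite function $c_t=\max\{\psi,-t\}$. It then applies the \emph{weak} maximum principle to the upper semicontinuous subsolution $\varepsilon c_t-u$ on a relatively compact open set containing $K\cup Q_t\cup\{x\}$. Choosing $t>m/\varepsilon$ makes the boundary values off $K$ at most $-m$, which gives $u(x)\geq m+\varepsilon c_t(x)$. Letting $\varepsilon\downarrow0$ at finite points of $\psi$ yields $u\geq\min_Ku$ everywhere, so the minimum is attained on $K$.

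\textbf{Your route.} You minimize the penalized function $u-\varepsilon\psi$ globally, using compactness of its sublevels. You then use the \emph{strong} minimum principle for the lower semicontinuous, possibly $+\infty$-valued supersolution $u-\varepsilon\psi$ on the components of $M\setminus K$ to move a minimizer onto $K$. Finally, $m_\varepsilon\downarrow\inf u$ and compactness of $K$ give an attained minimum.

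\textbf{Comparison.} Your route is exactly the mechanism of Lemma~\ref{lem:penalized} and Lemma~\ref{lem:relative-constant} specialized to a point base, so it treats all three bases uniformly and avoids building a comparison domain. The paper's truncation removes the value $-\infty$, so it only needs a comparison principle for finite-valued subsolutions. Yours needs the strong minimum principle for extended-valued semicontinuous supersolutions. That principle is available, for instance through the identification in \cite{HL} with $L_\alpha$-superharmonic functions.

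\textbf{Minor points.} Your smoothing fallback is unnecessary. If $q$ touches $u-\varepsilon\psi$ from below at $p$, then $u-q$ touches the plurisubharmonic function $\varepsilon\psi$ from above at a point where it is finite. The sub-mean value property on each complex line then makes the Levi form of $u-q$ at $p$ positive semidefinite, so $L_\alpha q(p)\leq L_\alpha u(p)\leq0$ directly. Also, the case $K=\varnothing$ can simply be avoided by enlarging $K$, as the paper does.
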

\begin{proof}
Enlarge the exceptional set to a nonempty compact $K$ and set
$m=\min_Ku>0$. For $t>0$, put $c_t=\max\{\psi,-t\}$ and
$Q_t=\{\psi\geq-t\}$. Then $c_t$ is plurisubharmonic on
$M\setminus K$, finite upper semicontinuous, and equals
$-t$ outside $Q_t$. Fix $\varepsilon>0$ and choose
$t>m/\varepsilon$. The function $\varepsilon c_t-u$
is an upper semicontinuous subsolution for $L_\alpha$ on
$M\setminus K$, in the upper test sense. In a relatively
compact open set containing $K\cup Q_t\cup\{x\}$, its
boundary upper values on the complement of $K$ are at most
$-m$. The weak maximum principle gives
\[
m\leq u(x)-\varepsilon c_t(x).
\]
The weak principle applies to these viscosity subsolutions
by \cite{HL,Goffi}; no smooth boundary is needed.
The notation $\varepsilon\downarrow0$ means that $\varepsilon$ tends
to zero through decreasing positive values.
At a finite point of $\psi$, let $\varepsilon\downarrow0$;
density and continuity give $u\geq m$ everywhere.
The attained global minimum on $K$ and the strong minimum
principle make $u$ constant.
\end{proof}

For a nonconstant base use Definition~\ref{def:completion}, put
$\psi=\Phi|_U$ and $T=X\setminus N$.
\begin{lemma}\label{lem:compact-levels}
For each compact $D\subset B$ and $R>0$, the set
\[
\{x\in U:\pi(x)\in D,\ \psi(x)\geq-R\}
\]
is compact. The set $T$ is closed, lies in $U$, is proper over
$B$, and contains no critical point. If a fiber is intact,
$B_0=B\setminus\bar\pi(E)$ is nonempty and open, and
$\pi^{-1}(D)$ is compact for each compact $D\subset B_0$.
\end{lemma}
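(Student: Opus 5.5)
The three assertions are point-set consequences of Definition~\ref{def:completion}: properness of $\bar\pi$, upper semicontinuity of $\Phi$, and the fact that $N$ is an open neighborhood of the pole set and of the critical points. I would prove them in the order stated.

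\emph{Compact sublevels.} Let $D\subset B$ be compact and $R>0$. The set $S=\bar\pi^{-1}(D)$ is compact in $X$ because $\bar\pi$ is proper. Since $\Phi$ is upper semicontinuous on $X$, the superlevel $\{x\in X:\Phi(x)\geq-R\}$ is closed, so its intersection with $S$ is compact. This intersection avoids $E$, where $\Phi=-\infty$, so it lies in $M$. It also avoids the pole set, so it lies in $U$, where $\Phi=\psi$. Hence it equals the displayed set, which is therefore compact in $X$. The set $M$ is open in $X$ and carries the subspace topology, and $U$ is open in $M$. A subset of $U$ that is compact in $X$ is therefore compact in $U$ as well.

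\emph{The set $T$.} $T=X\setminus N$ is closed because $N$ is open. Since $N$ contains $\{\Phi=-\infty\}$, and this set contains $E$, we get $T\subset X\setminus\{\Phi=-\infty\}=U$. Since $N$ contains the critical points of $\pi$, none lies in $T$. For properness over $B$, take a compact $D\subset B$. Then $T\cap\bar\pi^{-1}(D)$ is a closed subset of a compact set, hence compact in $X$, and it lies in $U$. So $\pi|_T$ is proper.

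\emph{The intact-fiber assertion.} The one point needing care is closedness of $\bar\pi(E)$. I would use that a proper continuous map into the locally compact Hausdorff space $B$ is closed; $X$ is Hausdorff by Definition~\ref{def:completion}. Since $E$ is closed, $\bar\pi(E)$ is closed, and $B_0$ is open. An intact fiber over $s_0$ means $s_0\notin\bar\pi(E)$, so $B_0$ is nonempty. For a compact $D\subset B_0$, the set $\bar\pi^{-1}(D)$ is compact and disjoint from $E$, so it equals $\pi^{-1}(D)\subset M$ and is compact.
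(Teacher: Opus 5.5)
Your proof is correct and follows the paper's argument. Both obtain compactness of the superlevel set and of $T\cap\bar\pi^{-1}(D)$ as closed subsets of the compact set $\bar\pi^{-1}(D)$ that miss the pole set, and both get openness of $B_0$ from the fact that a proper map into the locally compact Hausdorff base is closed; you spell out details the paper leaves implicit (for instance $T\subset U$ because the pole set contains $E$), and the Hausdorffness of $X$ you invoke is not needed for the closed-map step.
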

\begin{proof}
The upper level set is closed in compact $\bar\pi^{-1}(D)$
and misses the pole set. The same applies to
$T\cap\bar\pi^{-1}(D)$. A proper map to the locally compact
base is closed, so $\bar\pi(E)$ is closed. The intact fiber
makes its complement nonempty. Over that complement the
completed compact fiber is already in $M$.
\end{proof}
For a real base set $Dq=q''$; for a curve base set
$Dq=q_{z\bar z}$ in a holomorphic coordinate.
\begin{lemma}\label{lem:penalized}
For a positive smooth supersolution $P_\alpha u\geq0$ and
$\varepsilon>0$, the minimum
\begin{equation}\label{eq:penalized}
v_\varepsilon(s)=\min_{x\in U\cap\pi^{-1}(s)}
\big(u(x)-\varepsilon\psi(x)\big)
\end{equation}
exists and is continuous and nonnegative. On each relatively
compact coordinate interval or disc $D\Subset B$, there is
$A_D\geq0$, independent of $\varepsilon$, such that
$Dv_\varepsilon\leq\varepsilon A_D$ in the lower test sense.
The notation $D\Subset B$ means that the closure of $D$ is compact
and contained in $B$.
\end{lemma}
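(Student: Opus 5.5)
The proof has three parts: existence and continuity from Lemma~\ref{lem:compact-levels}, nonnegativity, and the differential inequality, which I would get from the argument of Corollary~\ref{cor:proper-projection} with a penalty term added.

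\emph{Existence and continuity.} Set $c_\varepsilon=u-\varepsilon\psi$. Since $u>0$ and $\psi\le0$, we have $c_\varepsilon>0$, so $v_\varepsilon\geq0$ once the minimum exists. For a compact $D\subset B$, the sublevel $\{x\in U:\pi(x)\in D,\ c_\varepsilon(x)\le R\}$ is contained in $\{\pi\in D,\ \psi\ge -R/\varepsilon\}$. By Lemma~\ref{lem:compact-levels} this set is compact, and the sublevel is closed in it by continuity of $c_\varepsilon$ on $U$. The cost is finite on each fiber, because finite points of $\Phi$ meet every fiber. The map $\pi|_U$ is open: a nonconstant holomorphic map to a curve is open, and the pluriharmonic case reduces to the holomorphic one locally. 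Therefore \cite[Theorem~1.2]{FKV} gives attainment and continuity of $v_\varepsilon$.

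\emph{The inequality $Dv_\varepsilon\le\varepsilon A_D$.} Fix $D\Subset B$ and a lower test $q$ at $s\in D$. Then $u-\varepsilon\psi-q\circ\pi$ has a local minimum on $U$ at a fiber minimizer $x$. I would split into two cases according to where $x$ lies.
\begin{itemize}
\item[(a)] If $x\in N\cap U$, then $\psi$ is plurisubharmonic there, so $L_\alpha(u-\varepsilon\psi)\le L_\alpha u\le0$. The argument of Corollary~\ref{cor:proper-projection}, with $u$ replaced by $u-\varepsilon\psi$, gives $Dq(s)\le0$. This includes the critical minimizer case, since critical points lie in $N$: the strong minimum principle applies to the supersolution $u-\varepsilon\psi$ of $P_\alpha$ near $x$.
\item[(b)] If $x\in T$, then $x$ is a regular point of $\pi$, and $T\cap\pi^{-1}(\overline D)$ is compact by Lemma~\ref{lem:compact-levels}. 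At a regular point, choose local coordinates in which $\pi$ is the first coordinate $z_1$ (or $\Re z_1$). The minimum condition gives $L_\alpha(q\circ\pi)(x)\le L_\alpha(u-\varepsilon\psi)(x)\le-\varepsilon L_\alpha\psi(x)$. Also $L_\alpha(q\circ\pi)=|\partial\pi|^2_\alpha\,Dq(s)$, up to a positive normalization factor.
\end{itemize}

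\emph{Choice of $A_D$.} By compactness of $T\cap\pi^{-1}(\overline D)$, the quantity
\[
A_D=\sup_{T\cap\pi^{-1}(\overline D)}\frac{\max(0,-L_\alpha\psi)}{|\partial\pi|^2_\alpha}
\]
(with the normalization constant included) is finite. It is also independent of $\varepsilon$ and $u$. Case (b) then gives $Dq(s)\le\varepsilon A_D$, and case (a) gives $Dq(s)\le0\le\varepsilon A_D$.

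\emph{Main obstacle.} The hardest step is the regular-point computation in case (b) and its uniformity. Over a compact set of regular points one must check that $|\partial\pi|_\alpha$ is bounded below, which holds because $T$ contains no critical point, and that $L_\alpha\psi$ is bounded, which holds because $\psi$ is smooth on $U\supset T$. Both bounds come from compactness of $T\cap\pi^{-1}(\overline D)$.
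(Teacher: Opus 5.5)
Your proposal is correct and follows essentially the same route as the paper. Existence and continuity come from the compact sublevels of Lemma~\ref{lem:compact-levels}, openness of $\pi|_U$, and the minimum theorem \cite{FKV}. You then split the fiber minimizers into two cases. On $T$, the regular-point chain rule and compactness of $T\cap\pi^{-1}(\overline D)$ bound $Dq$ by $\varepsilon\sup|L_\alpha\psi|/a$. In $N\cap U$, plurisubharmonicity of $\psi$ and the strong minimum principle give $Dq\le0$, including at critical minimizers.
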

\begin{proof}
The nonnegative cost $u-\varepsilon\psi$ has compact sublevels
over compact base sets by Lemma~\ref{lem:compact-levels}.
The map $U\to B$ is open and onto. Thus the minimum theorem
\cite{FKV} gives attainment and continuity.
Let $q$ touch $v_\varepsilon$ from below at $s_0$, and choose
a minimizer $x_0$. Then $r=u-q\circ\pi-\varepsilon\psi$
has a local minimum at $x_0$. The Hessian chain rule gives
\begin{equation}\label{eq:base-chain}
L_\alpha(q\circ\pi)=a(Dq\circ\pi),\qquad
a=\tr_\alpha(\ii\partial\pi\wedge\bar\partial\pi),
\end{equation}
where $z\circ\pi$ replaces $\pi$ for a curve base. The
coefficient $a$ is positive at regular points. On $T$ the
minimum inequality gives $aDq+\varepsilon L_\alpha\psi\leq0$.
Compactness of $T\cap\pi^{-1}(\overline D)$ and absence of
critical points bound $|L_\alpha\psi|/a$ there.
Outside $T$, suppose $Dq(s_0)>0$. Nearby, $Dq>0$,
$\psi$ is plurisubharmonic, and $L_\alpha r\leq0$.
The strong minimum principle makes $r$ constant, while at a
nearby regular point \eqref{eq:base-chain} gives
$L_\alpha r<0$, a contradiction. This proves the estimate
also at critical minimizers.
\end{proof}
\begin{lemma}\label{lem:relative-constant}
Under these hypotheses and the intact fiber condition,
every positive smooth supersolution $P_\alpha u\geq0$ is
constant.
\end{lemma}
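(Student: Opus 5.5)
The plan is to pass to the limit $\varepsilon\downarrow0$ in the penalized fiber minima of Lemma~\ref{lem:penalized}, show that the limit is constant on the parabolic base or on $\R$, and then use the intact fiber to produce an attained global minimum of $u$.

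Fix a positive smooth supersolution $u$ and a sequence $\varepsilon_j\downarrow0$. The functions $v_j=v_{\varepsilon_j}$ from \eqref{eq:penalized} are continuous and nonnegative. They satisfy $Dv_j\leq\varepsilon_jA_D$ on each relatively compact coordinate interval or disc $D$, in the lower test sense. They are locally bounded: since $\psi\leq0$ and the finite points of $\Phi$ meet every fiber, we have $v_j(s)\leq u(x)-\varepsilon_j\psi(x)\leq u(x)-\psi(x)$ for any chosen $x$, and this bound is locally uniform in $s$ by continuity of $v_1$ together with monotonicity. Indeed $v_\varepsilon$ is nondecreasing in $\varepsilon$, so the $v_j$ decrease and are bounded by $v_1$. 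Take the lower relaxed limit $V$ as in \eqref{eq:relaxed}. By the cited viscosity stability result it is superharmonic on a curve base and concave on $\R$. It is also nonnegative and finite, since $V\leq v_1$. By parabolicity, or by the concavity remark on $\R$, $V$ is a constant $c\geq0$.

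Next, I would identify $V$ over the intact region $B_0$ from Lemma~\ref{lem:compact-levels}. For $s\in B_0$ the full fiber is compact and lies in $M$. Since the $v_j$ decrease, $V(s)\leq\lim_jv_j(s)$. For the lower bound, note that $v_j(t)\geq\min_{\pi^{-1}(t)\cap U}u$. Also $u$ is continuous on the compact set $\pi^{-1}(\overline D)$ for a small compact neighborhood $D\subset B_0$. Hence the liminf over $t\to s$ is at least $m(s)=\min_{\pi^{-1}(s)}u$. This uses density of $U$ and the continuity of $t\mapsto\min_{\pi^{-1}(t)}u$, which follows from properness over $B_0$ and openness of $\pi$. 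Conversely, $v_j(s)\to m(s)$ by approximating a fiber minimizer with finite points of $\psi$. So $V=m$ on $B_0$, and $m\equiv c$ there.

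Finally, compare $u$ with $c$ everywhere. For any $x\in U$ with $s=\pi(x)$, we have $u(x)-\varepsilon_j\psi(x)\geq v_j(s)$. Letting $j\to\infty$ gives $u(x)\geq\limsup_jv_j(s)\geq V(s)=c$. Density of $U$ and continuity of $u$ then give $u\geq c$ on $M$. On the intact fiber over $s_0\in B_0$, the value $c=m(s_0)$ is attained at a point of $M$. Thus $u$ attains its global minimum, and the strong minimum principle makes $u$ constant.

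The main obstacle I expect is the identification $V=m$ on $B_0$. The relaxed limit involves nearby points $t$, and the minimizers in \eqref{eq:penalized} are taken only over $U$, not over the full fiber. I would handle this by working on compact sets $\pi^{-1}(\overline D)$ with $D\Subset B_0$, where the penalty term is harmless because $-\varepsilon_j\psi\geq0$ gives the lower bound directly. The upper bound only needs the single point $s$, not the liminf.
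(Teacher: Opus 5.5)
This is essentially the paper's proof. It uses the same penalized minima $v_{\varepsilon_j}$ and lower relaxed limit $V$, the Liouville step giving $V\equiv c$, the density argument giving $u\geq c$ on $M$, the bound $v_{\varepsilon_j}\geq m$ on $B_0$ giving $c\geq m$, and the strong minimum principle on an intact fiber.

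One sub-step is not justified as written. Your claim $v_j(s)\to m(s)$ ``by approximating a fiber minimizer with finite points of $\psi$'' needs $U\cap\pi^{-1}(s)$ to be dense in $\pi^{-1}(s)$. Definition~\ref{def:completion} only gives density of finite points in $M$, and the pole set may contain a whole component of an intact fiber, such as an exceptional curve. You do not need this claim, though: your final inequality $u\geq c$ already gives $m(s_0)\geq c$, which is exactly how the paper obtains the reverse inequality.
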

\begin{proof}
Take $\varepsilon_j=1/(j+1)$ and the lower relaxed limit $V$
of $v_{\varepsilon_j}$ as in \eqref{eq:relaxed}.
The functions are nonnegative and bounded above locally by
$v_1$. Lemma~\ref{lem:penalized} and viscosity stability give
$DV\leq0$. On a parabolic surface every nonnegative superharmonic
function is constant, and on $\R$ every nonnegative concave function is
constant. Thus $V\equiv c$. For each finite point of $\Phi$,
\[
v_{\varepsilon_j}(\pi(x))\leq u(x)-\varepsilon_j\psi(x)
\]
implies $c\leq u(x)$. Density gives $u\geq c$ on $M$.
On $B_0$ the ordinary fiber minimum
$m(s)=\min_{\pi^{-1}(s)}u$ is attained and continuous.
Since $\psi\leq0$, $v_{\varepsilon_j}\geq m$ there, hence
$c\geq m$. The opposite inequality follows from $u\geq c$.
Thus a fiber over $B_0$ contains a global minimum of $u$,
and the strong minimum principle makes $u$ constant.
\end{proof}

\begin{proof}[Proof of Theorem~\ref{thm:fibers}]
For a point base, $X$ is compact. The upper levels of $\Phi$
are compact and miss $E$, while $X\setminus N$ is a compact
subset of $M$. Lemma~\ref{lem:point} applies.
For either nonconstant base use
Lemmas~\ref{lem:compact-levels}--\ref{lem:relative-constant}.
If a smooth function $f$ bounded from above had a positive Levi direction
at every point, choose $\alpha$ by
Lemma~\ref{lem:trace-selection}. For $f\leq C$,
$C+1-f$ would be a nonconstant positive supersolution,
contradicting Lemma~\ref{lem:point} for a point base or
Lemma~\ref{lem:relative-constant} for either of the other bases. Theorem~\ref{thm:levi}
therefore shows that every $P_\alpha$ is critical and gives uniqueness, while
Theorem~\ref{thm:existence} supplies existence.
\end{proof}

\begin{corollary}\label{cor:cover}
Let $p:\widetilde M\to M$ be a surjective local
biholomorphism between connected noncompact complex manifolds of
dimension $n\geq2$. If every conformal class on
$\widetilde M$ has uniqueness up to scaling, then the same
holds on $M$. If every $P_\beta$ on $\widetilde M$ is
critical, then every $P_\alpha$ on $M$ is critical.
\end{corollary}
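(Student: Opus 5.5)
The second assertion (criticality descends) is the natural place to start. By Theorem~\ref{thm:levi}, uniqueness in every conformal class is equivalent to criticality of every $P_\alpha$, but only when there are at least two ends. So the first assertion will also be routed through the Levi condition~(1), which is equivalent to criticality for all metrics without any restriction on ends.

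\emph{Second assertion.} Suppose every $P_\beta$ on $\widetilde M$ is critical, and fix a metric $\alpha$ on $M$. Let $u>0$ satisfy $P_\alpha u\geq0$. Put $\beta=p^*\alpha$; this is a Hermitian metric because $p$ is a local biholomorphism. Pullback of functions commutes with $\ii\ddbar$ and with the trace, so
\[
L_\beta(u\circ p)=(L_\alpha u)\circ p\leq0 .
\]
Hence $u\circ p$ is a positive supersolution for $P_\beta$. By criticality and Proposition~\ref{prop:pinchover}, $u\circ p$ is constant. Since $p$ is surjective, $u$ is constant, and so $P_\alpha$ is critical.

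Equivalently, in the form of condition~(1): suppose $f\leq C$ on $M$ has a positive Levi direction at every point. Then $f\circ p\leq C$, and $\Lev(f\circ p)=p^*\Lev f$. Since $dp$ is an isomorphism on $T^{1,0}$, the function $f\circ p$ also has a positive Levi direction at every point.

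\emph{First assertion.} The main obstacle is that uniqueness on $\widetilde M$ need not be equivalent to criticality on $\widetilde M$ unless $\widetilde M$ has at least two ends. I would first try to show directly that uniqueness on $\widetilde M$ implies condition~(1) on $\widetilde M$. Suppose instead that some $P_\beta$ on $\widetilde M$ is subcritical. Then Lemma~\ref{lem:levi-function} provides a bounded $f$ with a positive Levi direction everywhere. With that $f$ in hand, I would try to build two nonproportional Gauduchon densities in a single conformal class, in the spirit of the Stein construction and Lemma~\ref{lem:root}:
\begin{enumerate}
\item choose a metric $\gamma$ with $L_\gamma f>0$ (Lemma~\ref{lem:trace-selection}); then $P_\gamma$ is subcritical;
\item modify the metric so that $f$, or a function of $f$, becomes a nonconstant positive solution of the adjoint equation for some metric.
\end{enumerate}
If this works, uniqueness on $\widetilde M$ gives (1) on $\widetilde M$, the descent above gives (1) on $M$, and Theorem~\ref{thm:levi} gives uniqueness on $M$.

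If that route fails, the fallback is to work with adjoint densities directly. Given two positive solutions $\rho_1,\rho_2$ of $P_\alpha^*\rho=0$ on $M$, I would pull them back. The adjoint operator transforms naturally under local biholomorphisms: by \eqref{eq:density}, the condition $\ddbar(\rho\alpha^{n-1})=0$ is local and commutes with pullback. So $\rho_i\circ p$ solve $P_{p^*\alpha}^*$ on $\widetilde M$. Uniqueness on $\widetilde M$ then makes $\rho_1\circ p$ and $\rho_2\circ p$ proportional, and surjectivity of $p$ makes $\rho_1$ and $\rho_2$ proportional. By Proposition~\ref{prop:rays}, this gives uniqueness in $[\alpha]$. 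Existence holds by Theorem~\ref{thm:existence}.

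This fallback is in fact clean and needs no ends hypothesis, so I would present it as the proof of the first assertion. The only point needing care is that pullback preserves the Gauduchon equation and the conformal class, which is immediate from locality of $\ddbar$.
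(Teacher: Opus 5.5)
Your proposal is correct, and the argument you finally adopt is the paper's. For the first assertion you pull back the Gauduchon densities using $\ddbar\big((\rho\circ p)(p^*\alpha)^{n-1}\big)=p^*\ddbar(\rho\alpha^{n-1})$, invoke uniqueness in $[p^*\alpha]$, descend proportionality by surjectivity, and take existence from Theorem~\ref{thm:existence}; for the second you pull back positive supersolutions via $L_{p^*\alpha}(u\circ p)=(L_\alpha u)\circ p$.

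The exploratory route through condition~(1) is unnecessary, and its step~(2) is unsupported, since the paper relates uniqueness to criticality only under the two-ends hypothesis. Drop it from the write-up.
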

\begin{proof}
Two conformal Gauduchon metrics pull back to such metrics;
scaling upstairs descends by surjectivity. Existence follows
from Theorem~\ref{thm:existence}. To prove the assertion about critical operators, use
$P_{p^*\alpha}(u\circ p)=(P_\alpha u)\circ p$.
Here $p^*\alpha$ is the pullback metric; the same notation is used
for pullback of differential forms.
Every positive supersolution on $\widetilde M$ is constant by
Proposition~\ref{prop:pinchover}. Surjectivity then makes every positive
supersolution on $M$ constant, so $P_\alpha$ is critical.
\end{proof}

\subsection{Changes on compact sets}
We say that two manifolds are \emph{biholomorphic at infinity}
if a biholomorphism between complements of compact sets
preserves escape from ambient compact sets in both directions.
Equivalently, sequences going to infinity in either manifold
do so after applying the exterior map or its inverse.
The ambient qualification excludes approach to the artificial
inner boundary.
\begin{proposition}\label{prop:infinity}
If two manifolds are biholomorphic at infinity, Gauduchon metrics are
unique up to scaling in every Hermitian conformal class on one manifold
if and only if the same holds on the other. More precisely, suppose the
conformal classes of two metrics agree outside compact sets under this
biholomorphism. Then Gauduchon metrics are unique up to scaling in one
class if and only if they are unique up to scaling in the other.
\end{proposition}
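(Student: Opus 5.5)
The plan is to reduce uniqueness in a conformal class to a statement about the cone of positive adjoint solutions, and then show that this cone, up to a positive homogeneous bijection, depends only on data outside a compact set. Fix metrics $\alpha$ on $M$ and $\beta$ on $M'$ whose conformal classes agree outside compact sets under the biholomorphism at infinity $F$. Since the Gauduchon equation $\ddbar(\rho\alpha^{n-1})=0$ only sees $\rho\alpha^{n-1}$, we may replace $\beta$ inside its class without changing the set of Gauduchon metrics in that class. By Proposition~\ref{prop:rays} this set is the set of positive solutions of $P_\beta^*$, after multiplying by a fixed positive function. Hence we may assume that $F^*\beta=\alpha$ outside a compact set $K$.

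With this normalization, the density equations \eqref{eq:density} for $\alpha$ and $\beta$ coincide on the exteriors. Indeed $P_\alpha^*$ and $P_\beta^*$ correspond there, because both are $-(\det G)^{-1}\mathcal D(\,\cdot\,\adj G)$ for the same $G$. The first assertion of the proposition, about every conformal class, then follows from the second. Every class on $M'$ restricts at infinity to a class on $M$, and any metric on the complement of a compact set extends to a global Hermitian metric after enlarging the compact set by a cutoff. So the task is to show that, for such a pair, the adjoint solution cones on $M$ and $M'$ are both single rays or both not.

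\textbf{Step 1: criticality is a property of the ends.} Suppose $P_\alpha$ is subcritical. Take the bounded minimal Green potential $W\geq0$ of a nonzero compactly supported source, as in Lemma~\ref{lem:green-bound}. It is a positive solution of minimal growth outside a compact set, so $\inf W=0$. Let $b=\min_{\partial D}W>0$ for a smooth relatively compact $D\supset K$ containing the source.
\begin{itemize}
\item Then $u=\min\{W,b/2\}$ equals $b/2$ near $\overline D$ and is nonconstant on the exterior.
\item It is a positive bounded supersolution for $P_\alpha$ on the exterior, and we extend it by the constant $b/2$ across $D$.
\item Transported by $F$, this gives a nonconstant positive supersolution of $P_\beta$ on $M'$, in the viscosity sense.
\end{itemize}
A mollified minimum, or the viscosity version of the minimum principle used in Lemma~\ref{lem:point}, shows that $P_\beta$ admits a nonconstant positive supersolution. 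Hence $P_\beta$ is not critical, by Proposition~\ref{prop:pinchover}. By symmetry, $P_\alpha$ and $P_\beta$ are critical or subcritical together. By Proposition~\ref{prop:pinchover} the same holds for the adjoints.

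\textbf{Step 2: comparing the solution cones.} In the critical case both adjoint solution cones are single rays, so both classes have uniqueness by Propositions~\ref{prop:pinchover} and~\ref{prop:rays}. In the subcritical case we apply Proposition~\ref{prop:taylor} to $P_\alpha^*$ and $P_\beta^*$, using exteriors $E=M\setminus\overline D$ and $E'=F(E)$ with $D\supset K$ large. The cones $\mathcal D_{P_\alpha^*}(E)$ and $\mathcal D_{P_\beta^*}(E')$ are identified by $F$, because the operators agree there. The hypothesis that $F$ preserves escape to infinity in both directions ensures that the outer exhaustion, the zero outer data, and the ends are the same on both sides. Composing the two Taylor bijections gives a positive homogeneous bijection $\mathcal C_{P_\alpha^*}(M)\to\mathcal C_{P_\beta^*}(M')$, which maps rays to rays. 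So one cone is a single ray exactly when the other is, and Proposition~\ref{prop:rays} translates this into uniqueness up to scaling.

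The main obstacle is Step~1. Criticality is not stable under compact perturbations for general operators, since a compactly supported potential changes it. The argument has to use $P1=0$ on both sides to transplant the truncated Green potential, and the regularity of the truncation must be handled carefully.
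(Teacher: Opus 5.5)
Your proof is correct, and apart from Step~1 it is the paper's argument. Both normalize so that $F^*\beta=\alpha$ on the exterior and settle the critical case with Propositions~\ref{prop:pinchover} and~\ref{prop:rays}. In the subcritical case both identify the two positive adjoint cones with the common exterior cone through Proposition~\ref{prop:taylor}, and both reach every conformal class by extending exterior metrics with a cutoff.

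The difference is how you show that $P_\alpha$ and $P_\beta$ are critical together. The paper uses $P1=0$ on both sides: criticality then means that the global solution $1$ has minimal growth at infinity, i.e.\ is a ground state. Minimal growth is defined by comparisons on exteriors, where the two operators coincide. This takes one line but relies on Pinchover's ground state characterization. You instead transplant a nonconstant positive supersolution that is constant near the compact core. Your route uses only the supersolution characterization in Proposition~\ref{prop:pinchover} and the Green potentials of Lemma~\ref{lem:green-bound}, and it shows exactly where $P1=0$ is needed.

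Three details should be tightened.
\begin{enumerate}
\item $\inf W=0$ does not follow directly from minimal growth of $W$. It follows from minimality of the potential. If $\inf W=c>0$, then $P_\alpha(W-c)=P_\alpha W$. The minimum principle on each exhaustion domain then puts every Dirichlet approximant of $W$ below $W-c$, which gives the contradiction $W\le W-c$. Boundedness of $W$ is not needed.
\item You can avoid viscosity supersolutions entirely. Take $\phi$ smooth, nondecreasing and concave, with $\phi(t)=t$ for $t\le b/4$ and $\phi$ constant for $t\ge b/2$. Then
\[
P_\alpha(\phi\circ W)=\phi'(W)\,P_\alpha W-\phi''(W)\,|\partial W|_\alpha^2\geq0 .
\]
So $\phi\circ W$ is a smooth, positive, nonconstant supersolution. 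It is constant near $\overline D$, because $W\ge b$ on $\overline D$ by the minimum principle.
\item Write $K'$ for the compact part of $M'$ not covered by $F(M\setminus K)$. The transplanted function extends by that constant across $K'$ precisely because of the escape condition. It makes $K'\cup F(\overline D\setminus K)$ compact, and it makes $F(M\setminus N)$ closed in $M'$, where $N$ is a neighborhood of $\overline D$ on which $\phi\circ W$ is constant.
\end{enumerate}
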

\begin{proof}
First suppose the metrics agree on the identified exterior.
The escape condition gives common cofinal smooth compact
comparison boundaries. The trace operators and their formal
adjoints agree there. By the minimal growth characterization
in \cite[Definition~2.4, Theorem~2.5, and Corollary~2.8]{Pinchover},
the constant $1$ has minimal growth in one filling exactly
when it does in the other. Thus the trace operators are either both critical or both subcritical. If the operators are critical, Propositions~\ref{prop:pinchover}
and~\ref{prop:rays} give uniqueness on both manifolds.
In the subcritical case Proposition~\ref{prop:taylor}
identifies both positive adjoint cones, by positive homogeneous
bijections, with the same cone of exterior solutions with zero boundary values. One is a
ray exactly when the other is. Proposition~\ref{prop:rays}
then proves the assertion for the two fixed conformal classes.
If the exterior metrics are only conformal, extend their
positive ratio smoothly to a filling after shrinking the
exterior. Rescale one metric so that they agree; its conformal
class is preserved. Finally an arbitrary exterior metric
extends by a cutoff combination with a background metric.
Convexity of positive forms preserves positivity. Applying
the assertion for fixed conformal classes in both directions proves the
assertion about every Hermitian conformal class.
\end{proof}
\begin{corollary}\label{cor:modification}
A proper holomorphic modification which is biholomorphic
outside compact sets preserves uniqueness of Gauduchon metrics up to
scaling in every Hermitian conformal class. In particular, this holds
for blowing up a compact
smooth complex submanifold.
\end{corollary}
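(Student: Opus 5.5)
The plan is to reduce Corollary~\ref{cor:modification} to Proposition~\ref{prop:infinity}. Let $\sigma:\widetilde M\to M$ be a proper holomorphic map between connected noncompact complex manifolds of dimension $n\geq2$. Assume compact sets $\widetilde K\subset\widetilde M$ and $K\subset M$ are chosen so that $\sigma$ restricts to a biholomorphism $\widetilde M\setminus\widetilde K\to M\setminus K$.

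The first step enlarges the compact sets so that they correspond exactly. Replace $K$ by $K'=K\cup\sigma(\widetilde K)$, which is compact because $\sigma$ is continuous. Replace $\widetilde K$ by $\widetilde K'=\sigma^{-1}(K')$, which is compact because $\sigma$ is proper. Then $\sigma$ still maps $\widetilde M\setminus\widetilde K'$ biholomorphically onto $M\setminus K'$.

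The second step is the escape condition in the definition of biholomorphism at infinity, which is the only point needing care. Take a sequence $x_j$ in $\widetilde M\setminus\widetilde K'$ leaving every compact subset of $\widetilde M$. If $\sigma(x_j)$ stayed in a compact set $C\subset M$ along a subsequence, then $x_j$ would lie in the compact set $\sigma^{-1}(C)$, by properness. That is a contradiction. Conversely, take $y_j\in M\setminus K'$ leaving every compact subset of $M$, and set $x_j=\sigma^{-1}(y_j)$. If the $x_j$ stayed in a compact subset of $\widetilde M$, continuity of $\sigma$ would keep the $y_j$ in its compact image, again a contradiction. Hence the exterior map preserves escape in both directions.

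With both conditions verified, Proposition~\ref{prop:infinity} shows that uniqueness up to scaling in every Hermitian conformal class holds on $\widetilde M$ exactly when it holds on $M$. For the blow-up case, let $\widetilde M=\mathrm{Bl}_ZM$ with $Z$ a compact complex submanifold. The blow-down map is holomorphic and proper: the exceptional divisor is a projective bundle over the compact $Z$, so it is compact, and the map is biholomorphic off it. Taking $K=Z$ and $\widetilde K$ the exceptional divisor puts this case under the general statement. Connectedness and noncompactness of $\widetilde M$ follow from those of $M$, since the exceptional set is compact and has real codimension two.
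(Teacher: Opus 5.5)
Your proof is correct and follows the paper's route: you reduce to Proposition~\ref{prop:infinity}, using properness of $\sigma$ for escape under the forward exterior map and compactness of continuous images for escape under its inverse, then invoke the standard properties of the blow-down. The enlargement of $K$ and $\widetilde K$ is harmless but not needed. Properness of the blow-down comes from its local model as a closed submanifold of $U\times\PP^{k-1}$, where $k$ is the codimension of $Z$, not from compactness of the exceptional divisor alone; this is standard, and the paper simply cites it.
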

\begin{proof}
Properness preserves escape under the forward map; compact
images preserve it under the inverse exterior map. Apply
Proposition~\ref{prop:infinity}. The standard blowup has these
properties \cite[\S3.3.3]{Voisin}.
\end{proof}

\begin{corollary}\label{cor:exterior-projection}
Let $K\subset M$ be nonempty compact, set
$E=M\setminus K$, and let $V$ be an open neighborhood of
$\overline E$. Suppose $\pi:V\to B$ is holomorphic for a
parabolic curve base or pluriharmonic for a real base,
is open on $E$, and has dense regular points in $V$.
Assume that $\overline E\cap\pi^{-1}(D)$ is compact in $M$ for every
compact $D\subset B$. Then every Hermitian conformal class
on $M$ has uniqueness up to scaling.
\end{corollary}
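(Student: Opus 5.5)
The plan is to show that every positive smooth supersolution $P_\alpha u\geq0$ on $M$ is constant, for every Hermitian metric $\alpha$. Once this is known, the final paragraph of the proof of Theorem~\ref{thm:fibers} applies verbatim. If a function $f\leq C$ had a positive Levi direction at every point, Lemma~\ref{lem:trace-selection} would produce a metric $\alpha$ for which $C+1-f$ is a nonconstant positive supersolution. Hence condition (1) of Theorem~\ref{thm:levi} holds, every $P_\alpha$ is critical, and uniqueness follows from Theorem~\ref{thm:levi} together with Theorem~\ref{thm:existence}.

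Fix $u>0$ with $P_\alpha u\geq0$ and set $m_K=\min_K u>0$. I will run the fiber-minimum argument of Corollary~\ref{cor:proper-projection}, but only on the exterior. For $s\in\pi(\overline E)$ put
\[
v(s)=\min_{\overline E\cap\pi^{-1}(s)}u .
\]
The hypothesis that $\overline E\cap\pi^{-1}(D)$ is compact gives attainment and lower semicontinuity. To remove the artificial inner boundary $\partial K$, I would instead use the truncated function $w(s)=\min\{v(s),m_K\}$, extended by $m_K$ off $\pi(\overline E)$. Since $\overline E\cap\pi^{-1}(D)$ is compact and $\pi$ is continuous on $V$, the set $\pi(\overline E)$ is closed, so this extension is lower semicontinuous as well.

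The next step is the claim that $w$ is superharmonic on a curve base, or concave on $\R$, in the lower test sense. Let $q$ touch $w$ from below at $s_0$. If $w(s_0)=m_K$, then the constant $m_K$ touches from below as well. Since a minimum of two superharmonic functions is again superharmonic, it suffices to check the viscosity inequality $Dq(s_0)\leq0$ at points where $v(s_0)<m_K$.

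At such a point, a minimizer $x_0$ of $u$ on $\overline E\cap\pi^{-1}(s_0)$ cannot lie on $\partial K$, because $u\geq m_K$ there. So $x_0\in E$, an open set. Moreover $u-q\circ\pi$ has a local minimum at $x_0$ in $V$: nearby points in $E$ satisfy $u\geq v(\pi)\geq w(\pi)\geq q(\pi)$, and nearby points of $K$ satisfy $u\geq m_K\geq w\geq q$. The chain rule \eqref{eq:base-chain} then gives $Dq(s_0)\leq0$ at a regular minimizer. At a critical minimizer I would use the same strong-minimum-principle argument as in Corollary~\ref{cor:proper-projection}, relying on dense regular points and openness of $\pi$ on $E$.

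Thus $w$ is a positive superharmonic function on the parabolic surface, or a positive concave function on $\R$, and is therefore constant, equal to $c\leq m_K$. If $c<m_K$, then $v$ attains $c$ at some point, so $u$ attains its global infimum $c$ at a point of $E$. Otherwise $u\geq m_K$ on $E$, and the global minimum of $u$ is attained on $K$. In either case the strong minimum principle makes $u$ constant.

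I expect the main obstacle to be the viscosity inequality when the minimizer lies near $\partial K$ or near the frontier of $\pi(\overline E)$, where $w$ switches between $v$ and the constant $m_K$. The truncation at $m_K$ is designed to handle exactly this: it forces every relevant minimizer into the open set $E$. I would check it carefully, including at critical points and at boundary points of the image, where openness of $\pi$ on $E$ is needed.
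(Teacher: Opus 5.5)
Your proposal is correct and follows essentially the paper's argument: a capped minimum of $u$ over the exterior fibers, the contact argument at minimizers forced into the open set $E$, the Liouville property of the parabolic base (or concavity on $\R$), and the strong minimum principle. The one difference is that you minimize over $\overline E$ and cap exactly at $\min_K u$, which makes the fiber minimum attained and avoids the paper's cap $k<\min_K u$ and limit $k\uparrow1$; also, in the case $w(s_0)=m_K$ your wording is off, since the constant $m_K$ touches $w$ from above, not below, and the clean justification is that $q\leq w\leq m_K=q(s_0)$ gives $q$ a local maximum, so $Dq(s_0)\leq0$.
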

\begin{proof}
Normalize a positive supersolution by $\min_Ku=1$.
For $0<k<1$ use the capped minimum
$v(s)=\min\{k,\inf_{E\cap\pi^{-1}(s)}u\}$, with
empty fiber infimum $+\infty$. Sublevels below one are
compact and cannot approach the inner boundary, where
$u\geq1$. The minimum theorem and the contact argument of
Lemma~\ref{lem:penalized}, with zero potential, give
$Dv\leq0$, so $v\equiv c\leq k$. Outside the compact
projection of the inner boundary, the exterior fibers are
compact. If $c<k$, an attained fiber minimum gives a global
minimum of $u$, contradicting its normalization unless it
is constant. Thus $u\geq k$. The notation $k\uparrow1$ means that $k$ tends to one through increasing
values less than one.
Letting $k\uparrow1$ gives
an attained minimum on $K$, so $u$ is constant. Proposition~\ref{prop:pinchover}
shows that $P_\alpha$ and $P_\alpha^*$ are critical, and
Proposition~\ref{prop:rays} proves the assertion.
\end{proof}

\subsection{Compact deletions and partially deleted fibers}
\begin{corollary}\label{cor:deletion}
Let $X$ be a compact connected complex manifold of dimension
$n\geq2$, and let $E\subset X$ be nonempty closed.
Suppose one open neighborhood of $E$ carries a
plurisubharmonic function $\varphi$, finite on a dense set,
with $E\subset\{\varphi=-\infty\}$. Then $X\setminus E$
is connected and noncompact, and every Hermitian conformal
class on it has a Gauduchon metric unique up to scaling.
\end{corollary}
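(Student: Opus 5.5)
This is the point-base case of Theorem~\ref{thm:fibers}, applied through Lemma~\ref{lem:point}. The plan is to show that every positive smooth supersolution of $P_\alpha$ on $M=X\setminus E$ is constant. Uniqueness then follows from Propositions~\ref{prop:pinchover} and~\ref{prop:rays}, and existence from Theorem~\ref{thm:existence}.

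\emph{Topology of $M$.} Since $E\subset\{\varphi=-\infty\}$ is locally pluripolar, it has empty interior. Hence $M$ is dense in $X$.

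Connectedness follows from the extension theorem for bounded plurisubharmonic functions. A closed pluripolar set does not disconnect a connected manifold: if $M=M_1\sqcup M_2$ with both pieces open and nonempty, the function equal to $0$ on $M_1$ and $1$ on $M_2$ is pluriharmonic and bounded. By \cite[Theorem~1.8.6]{Forstneric}, both it and its negative extend plurisubharmonically across $E$. The extension is therefore pluriharmonic on the connected manifold $X$, hence constant, a contradiction.

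Noncompactness holds because $M$ is a proper open subset of the connected manifold $X$ (here $E\ne\varnothing$). If $M$ were compact it would also be closed, forcing $M=X$.

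\emph{Building the potential.} Choose open sets $E\subset N'\Subset N$, where $N$ is the given neighborhood carrying $\varphi$. Subtract a constant so that $\varphi\le -1$ on $\overline{N'}$; this is possible because $\varphi$ is upper semicontinuous and hence bounded above on compacts. Then define
\[
\psi=\begin{cases}\varphi & \text{on } N'\cap M,\\ -1 & \text{on } M\setminus N'.\end{cases}
\]
This $\psi$ may fail to be upper semicontinuous across $\partial N'$. The fix is to take instead $\psi=\max\{\varphi,-1\}$ near $\partial N'$, or, more cleanly, to apply Lemma~\ref{lem:point} in the following form:
\begin{itemize}
\item[(a)] upper levels are compact,
\item[(b)] the function is plurisubharmonic outside a compact subset of $M$.
\end{itemize}

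So let $c=\sup_{\partial N'}\varphi$, which is finite, and set $\psi=\max\{\varphi,c\}$ on $N'\cap M$ and $\psi=c$ elsewhere, after shifting so that $c\le 0$. Then $\psi$ is upper semicontinuous and nonpositive. It is plurisubharmonic on $N'\cap M$ minus the compact set $\overline{N'}\cap\{\varphi\ge c\}\cap\ldots$; in fact it is plurisubharmonic away from $\partial N'$, which is a compact subset of $M$. It is finite on a dense set, and it tends to $-\infty$ only along $\{\varphi=-\infty\}$.

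\emph{Main obstacle.} The upper level sets $\{\psi\ge -R\}$ must be compact in $M$, which requires $\psi\to-\infty$ at \emph{every} point of $E$ in the sense of upper limits. Upper semicontinuity of $\varphi$ with $\varphi=-\infty$ on $E$ gives exactly this: for each $R$, the set $\{\varphi<-R\}$ is an open neighborhood of $E$. Consequently $\{\psi\ge-R\}$ is closed in $X$ and disjoint from $E$, hence compact in $M$.

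With (a) and (b) in hand, Lemma~\ref{lem:point} applies. Its proof is insensitive to the exceptional compact set, which here is $\partial N'$. The lemma makes every positive supersolution constant. As in the proof of Theorem~\ref{thm:fibers}, this excludes a bounded-above function with positive Levi direction via Lemma~\ref{lem:trace-selection}, and Theorem~\ref{thm:levi} then yields the uniqueness claim.
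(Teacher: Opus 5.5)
\textbf{Gap: the potential you finally adopt has no pole along $E$.} Your overall route is the paper's: removable singularities for the topology, then a point-base potential fed into Lemma~\ref{lem:point}, which is Theorem~\ref{thm:fibers} with a point base. The failing step is the choice $\psi=\max\{\varphi,c\}$ on $N'\cap M$ and $\psi=c$ elsewhere. This gives $\psi\geq c$ on all of $M$, so the truncation from below erases the pole along $E$. Hence $\{\psi\geq -R\}=M$ for every $R\geq -c$. That set is not compact, so hypothesis (a) of Lemma~\ref{lem:point} fails. Your ``main obstacle'' paragraph argues with the sets $\{\varphi<-R\}$, but after truncation these are not the sets $\{\psi<-R\}$, which are empty once $R>-c$. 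Separately, $\sup_{\partial N'}\varphi$ may be $-\infty$, since nothing prevents $\partial N'$ from lying in the pole set.

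\textbf{Repair: keep the definition you discarded.} After normalizing $\varphi\leq-1$ on $\overline{N'}$, let $\psi$ equal $\varphi$ on $N'\cap M$ and $-1$ on $M\setminus N'$. This is already upper semicontinuous: $\{\psi<t\}=\{\varphi<t\}\cap N'\cap M$ for $t\leq-1$, and $\{\psi<t\}=M$ for $t>-1$. Equivalently, at $p\in\partial N'$ the upper limit from inside is at most $\varphi(p)\leq-1=\psi(p)$. The function is plurisubharmonic on $N'\cap M$, and the exceptional set $X\setminus N'$ is a compact subset of $M$. Finally, $\{\psi\geq-R\}$ is the complement in $X$ of the open set $\{\varphi<-R\}\cap N'\supset E$, hence compact in $M$. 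This is exactly the paper's construction: restrict $\varphi$ to a region where it is negative and extend by zero.

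\textbf{Minor point in the connectedness step.} The plurisubharmonic extensions of $u$ and $-u$ are upper limits and need not be negatives of each other on $E$. So ``the extension is pluriharmonic'' needs an argument. It is simpler to argue as the paper does: the extension of the $0/1$ function is plurisubharmonic on compact connected $X$, hence constant by the maximum principle.
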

\begin{proof}
The set is locally pluripolar and has empty interior. A
separation of its complement would define a nonconstant
bounded locally constant plurisubharmonic function. Removable
singularities \cite[Theorem~1.8.6]{Forstneric} extend it to
compact $X$, contradicting the maximum principle. The
complement is dense and noncompact. Choose $x_0\notin E$,
restrict $\varphi$ to a smaller region where $\varphi<0$
which avoids $x_0$, and extend it by zero elsewhere.
This extension is nonpositive upper semicontinuous, has
compact upper levels disjoint from $E$, and is
plurisubharmonic near $E$. Apply Theorem~\ref{thm:fibers}
with a point base.
\end{proof}
For a finite set of points use the local potential
$\log\sum_j|z_j|^2$. Its Levi matrix away from zero is
$\delta_{jk}/q-\bar z_jz_k/q^2$, $q=\sum_j|z_j|^2$,
which is positive semidefinite by Cauchy--Schwarz.
Truncation shows that the function is plurisubharmonic at the pole.
If a compact deleted set lies in a coordinate hyperplane
$z_j=a$, use $\log|z_j-a|^2$. This includes a small circle
and a closed complex disc in a coordinate complex line
when $n\geq2$.
For a nonzero effective divisor $D$, with associated line
bundle $\mathcal O(D)$ and canonical section $s_D$, assume
that a smooth Hermitian metric $h$ has
$\ii\Theta_h\leq0$ on an open neighborhood of its support.
In the convention $|e|_h^2=e^{-\eta}$,
$\Theta_h=\ddbar\eta$. Then
$\varphi=\log|s_D|_h^2$ is plurisubharmonic near the support,
since $\ii\ddbar\varphi=-\ii\Theta_h$ off it.
Truncation extends across its zeros. Thus the complement
also satisfies Corollary~\ref{cor:deletion}.
These divisor conventions are recalled in \cite{GH}.

Identify $\PP^1=\C\cup\{\infty\}$. Choose a smooth
$q:\PP^1\to[0,\infty)$, zero only at infinity, equal to
$|t|^2$ near infinity in $t=1/w$, and equal to one off a
slightly larger neighborhood. For $c\geq\max\{1,\max q\}$ put
\begin{equation}\label{eq:fiber-potential}
\chi(w)=\log q(w)-\log c\ (w\ne\infty),\qquad
\chi(\infty)=-\infty.
\end{equation}
It is nonpositive, smooth on $\C$, constant near zero,
and plurisubharmonic outside a compact annulus avoiding zero.

\begin{example}\label{ex:partial}
For any proper closed subset $A\subset\C$,
$M_A=(\C\times\PP^1)\setminus(A\times\{\infty\})$
has a unique Gauduchon metric up to scaling in every
Hermitian conformal class.
\end{example}
\begin{proof}
Use $X=\C\times\PP^1$, $\bar\pi=\operatorname{pr}_1$,
and $\Phi(z,w)=\chi(w)$. Here $\operatorname{pr}_1(z,w)=z$ is the projection onto the first factor.
A fiber over $z_0\notin A$
is intact. The projection is regular, and the potential
is plurisubharmonic near its pole set. Apply
Theorem~\ref{thm:fibers}. Connectedness follows from the
dense open $\C^2$ subset, and projection onto $\C$ gives
noncompactness. If $A=\C$, the result is $\C^2$, where
\eqref{eq:affine-pair} gives nonuniqueness. Thus the
intact fiber assumption cannot be omitted.
\end{proof}
For a compact connected Riemann surface $F$, the projection
$\C\times F\to\C$ is proper, so
Corollary~\ref{cor:proper-projection} gives uniqueness.
Replacing $F$ by another compact curve preserves this
conclusion without identifying the complex ends.

\begin{proposition}\label{prop:no-point-potential}
For $A=\{\Re z\geq0\}$, the surface $M_A$ has no
nonpositive upper semicontinuous function, finite on a
dense set and plurisubharmonic outside a compact set,
whose upper levels are all compact.
\end{proposition}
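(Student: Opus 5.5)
The idea is to argue by contradiction, restricting a hypothetical $\psi$ to the partially deleted vertical fibers $\{z\}\times\C$ with $\Re z\geq0$. Each such fiber is a copy of $\C$, and the compact upper levels force $\psi\to-\infty$ along it. Liouville's theorem for subharmonic functions then forces $\psi\equiv-\infty$ on an open set, which contradicts finiteness on a dense set. No fiber minimum argument is needed.

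Suppose $\psi:M_A\to[-\infty,0]$ is upper semicontinuous, finite on a dense set, plurisubharmonic on $M_A\setminus K$ for a compact $K$, and has compact upper levels $\{\psi\geq-R\}$ for all $R>0$.

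\emph{Step 1.} The projection $\operatorname{pr}_1(K)\subset\C$ is compact, so it lies in a disc $\{|z|\leq r\}$. Fix $z_0$ with $\Re z_0>0$ and $|z_0|>r+1$. The fiber $F_{z_0}=\{z_0\}\times\PP^1\cap M_A=\{z_0\}\times\C$ lies in the open set $\{|z|>r\}\times\PP^1\cap M_A$, where $\psi$ is plurisubharmonic. Hence $w\mapsto\psi(z_0,w)$ is subharmonic on $\C$ or identically $-\infty$; this is the restriction property of \cite[\S2.9]{Klimek}. It is bounded above by $0$.

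\emph{Step 2.} Next, $\psi(z_0,w)\to-\infty$ as $w\to\infty$. The set $F_{z_0}$ is closed in $M_A$, because its closure in $\C\times\PP^1$ adds only $(z_0,\infty)$, which is deleted since $\Re z_0\geq0$. If $\psi(z_0,w_k)\geq-R$ along a sequence $w_k\to\infty$, then the compact set $\{\psi\geq-R\}$ would contain the points $(z_0,w_k)$. These points accumulate only at the deleted point $(z_0,\infty)$, so they have no limit in $M_A$, a contradiction.

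\emph{Step 3.} A subharmonic function on $\C$ that is bounded above is constant, by the subharmonic Liouville theorem \cite[Corollary~2.3.4]{Ransford}. Its constant value equals its limit at infinity, which is $-\infty$; this is excluded for a subharmonic function. So $\psi(z_0,\cdot)\equiv-\infty$ in all cases.

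\emph{Step 4.} This holds for every $z_0$ in the nonempty open set $\Omega=\{\Re z>0,\ |z|>r+1\}$. Hence $\psi\equiv-\infty$ on the nonempty open set $\Omega\times\C\subset M_A$, contradicting the assumption that $\psi$ is finite on a dense set.

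The only point requiring care is Step~2: it must be justified that compactness of upper levels forces divergence along these fibers. This holds precisely because the fibers over $\Re z\geq0$ are closed, noncompact subsets of $M_A$. By contrast, the fibers over $\Re z<0$ are full copies of $\PP^1$ and give no information. Step~1 also uses that the fiber stays away from $K$, which is why $|z_0|$ is taken large; the half-plane $\{\Re z>0\}$ is unbounded, so such $z_0$ exist.
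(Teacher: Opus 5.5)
Your proof is correct and follows essentially the same route as the paper: restrict to the affine-line fibers over $\Re z>0$ beyond the projection of the exceptional compact set, and apply the subharmonic Liouville theorem to get a constant. A finite constant is excluded because the fiber is closed and noncompact and would lie in a compact upper level (your Step~2 sequence argument unpacks exactly this), so $\psi\equiv-\infty$ on an open set, contradicting density of the finite points.
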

\begin{proof}
Let $K$ be a compact exceptional set, with projection in
$\{|z|\leq R_0\}$. For $\Re z>R_0$, the fiber is an
entire affine line outside $K$. The restriction of a
proposed potential to it is either identically $-\infty$
or a subharmonic function bounded from above, hence constant.
A finite constant would put the entire closed noncompact
fiber in an upper level. Therefore the potential is
$-\infty$ on a nonempty open set, contradicting density
of its finite points.
\end{proof}
The surface is nevertheless an open subset of
$\PP^1\times\PP^1$: its complement is
$(\{\infty\}\times\PP^1)\cup(\overline A\times\{\infty\})$.

\begin{example}\label{ex:blowups}
Blow up $\C\times\PP^1$ at the closed discrete set
$\{(k,p_0):k=1,2,\ldots\}$. The resulting surface has
uniqueness in every conformal class and cannot be an open
subset of a compact complex surface. The same conclusions
hold for the blowup of the half plane surface $M_A$ at
$\{(k,0):k=1,2,\ldots\}$. In the latter case there is
also no potential of Proposition~\ref{prop:no-point-potential}.
\end{example}
\begin{proof}
The first blowdown is proper: each compact set meets only
finitely many centers. Its composition with projection
to $\C$ is proper, so Corollary~\ref{cor:proper-projection}
applies. Its disjoint exceptional curves have normal
bundle $\mathcal O_{\PP^1}(-1)$ \cite[Chapter~VI, \S3]{Wells}
and self intersection $-1$ \cite[Theorem~11.3]{MS}.
For a compact surface $X$, $H_2(X,\R)$ denotes its second singular
homology group with real coefficients. The class $[E_j]$ is the
homology class of the exceptional curve $E_j$, and the dot denotes
the intersection pairing.
In a compact surface their homology classes would satisfy
$[E_j]\cdot[E_k]=-\delta_{jk}$, so they would be linearly
independent. This contradicts the finite dimension of
$H_2(X,\R)$ \cite[\S5]{Voisin}.
For the second surface blow up the ambient
$\C\times\PP^1$ at the same centers. The completed
projection is proper, and the pullback of \eqref{eq:fiber-potential}
is constant near every center. It is therefore
plurisubharmonic near the projection's critical points
as well as its poles. A fiber over $\Re z<0$ is intact,
so Theorem~\ref{thm:fibers} applies. In a blowup chart
$z-k=ab$, $w=b$, the projection differential is
$b\,da+a\,db$ and vanishes at $(0,0)$, which explains
the critical point requirement. The exceptional curves
again exclude compactification. Finally an open region
$\{\Re z>0,\Im z>R\}$ beyond the projection of any
compact core avoids all integer centers and retains its
affine line fibers. The proof of
Proposition~\ref{prop:no-point-potential} applies there.
\end{proof}

\subsection{Pluriharmonic bases and coverings}
\begin{example}\label{ex:real-base}
Use the Cousin group and proper function $f$ of
Example~\ref{ex:cousin}, and put
$M=(G\times\PP^1)\setminus(\{f\geq0\}\times\{\infty\})$.
Every Hermitian conformal class on $M$ has uniqueness up
to scaling. This threefold has no point base potential
as in Lemma~\ref{lem:point}, and every holomorphic map
from it to a noncompact Riemann surface is constant.
\end{example}
\begin{proof}
Take $X=G\times\PP^1$ over $\R$ with projection $f$ and
potential $\chi(w)$. A negative fiber is intact, and
Theorem~\ref{thm:fibers} applies. Beyond the projection
of any compact core the positive fibers contain entire
affine lines. The argument of
Proposition~\ref{prop:no-point-potential} excludes a
point base potential. A holomorphic function on $M$ is
constant on every full $\PP^1$ fiber over $f<0$ by the
compact maximum principle. The identity theorem makes
it independent of $w$ throughout $M$, and its value on
$w=0$ is constant because $G$ is Cousin.
Global holomorphic functions separate points of every
noncompact Riemann surface \cite[Corollary~26.8]{Forster};
composition therefore excludes nonconstant maps to one.
A proper map to a compact curve would make the completed
space compact and give the point base case already excluded.
\end{proof}

\begin{example}\label{ex:cover-essential}
Let $E=\C/(\Z+\ii\Z)$ and $\eta=[1/2]\in E$.
On $V=(G\times E\times\PP^1)\setminus
(\{|f|\geq1\}\times E\times\{\infty\})$,
the involution $\tau(g,e,w)=(-g,e+\eta,w)$ is free.
The quotient $N=V/\langle\tau\rangle$ has uniqueness in
every conformal class, but admits none of the direct
compactifications of Theorem~\ref{thm:fibers}.
Here $\langle\tau\rangle=\{\mathrm{id},\tau\}$ is the group generated by
the involution $\tau$; its quotient identifies each point with its image
under $\tau$.
\end{example}
\begin{proof}
The potential $\chi(w)$ on the completed product over
$\R$ has an intact fiber over zero, so Theorem~\ref{thm:fibers}
applies to $V$. Since $\eta$ is nonzero of order two, the quotient
map is a holomorphic two sheeted covering. It is proper:
it is closed and has finite fibers. Corollary~\ref{cor:cover}
gives uniqueness on $N$.
Escaping affine fibers exclude point base potentials on
$V$ and, by properness, on $N$. Holomorphic functions on
$V$ are constant: compactness of $E$ and the full
projective fibers on $|f|<1$, followed by the identity
theorem, reduces them to functions on $G$. Thus no
noncompact curve base is available on $N$.
If $\varphi$ is real pluriharmonic on $V$, the same compact
fibers and identity argument make it a function on $G$.
The coefficients of $\partial\varphi$ in $dz_1,dz_2$
are holomorphic functions on $G$, hence constants.
Write $d\varphi=2\Re(c_1dz_1+c_2dz_2)$. Vanishing
period integrals give
\[
\Re c_1=0,\quad \Re c_2=0,\quad
\Re(\ii c_1+\sqrt2c_2)=0.
\]
Hence $\varphi=af+b$. A function lifted from $N$ is
invariant under $\tau$, and $f(-g)=-f(g)$ forces $a=0$.
Therefore every real pluriharmonic function on $N$ is
constant. A compact curve base would give a point base
compactification as above, so that possibility is excluded
as well. The quotient and product atlases are the standard
ones \cite[Chapter~IV, \S5]{FG}.
\end{proof}

\subsection{Line bundles and a rank two counterexample}
\begin{corollary}\label{cor:line-bundle}
Let $B$ be a compact connected complex manifold of dimension
$m\geq1$, and let $L\to B$ be a holomorphic line bundle
with a smooth Hermitian metric of semipositive curvature.
We write $\Tot(L)$ for the total space of $L$, and
$\pi:\Tot(L)\to B$ for its bundle projection.
Then $\Tot(L)$ has a Gauduchon metric unique up to scaling
in every Hermitian conformal class, and every $P_\alpha$
is critical.
\end{corollary}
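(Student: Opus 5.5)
We compactify fiberwise and apply Theorem~\ref{thm:fibers} with a point base. Let $X=\PP(L\oplus\mathcal O_B)$ be the projective completion of $\Tot(L)$. It is a compact connected complex manifold of dimension $m+1\geq2$, and it contains $\Tot(L)$ as the complement of the section at infinity $E$. Hence $\Tot(L)$ is connected and noncompact. Following Definition~\ref{def:completion} with $B$ a point, it suffices to produce a nonpositive upper semicontinuous $\Phi$ on $X$ with the following properties: $\Phi=-\infty$ on $E$, $\Phi$ is finite on a dense subset of $\Tot(L)$, and $\Phi$ is plurisubharmonic on $N\cap\Tot(L)$ for some neighbourhood $N$ of $E$. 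Theorem~\ref{thm:fibers} then gives criticality of every $P_\alpha$ and uniqueness up to scaling. Alternatively, we can invoke Corollary~\ref{cor:deletion} directly on $X\setminus E$.

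For the potential we use the Hermitian metric $h$ on $L$ with $\ii\Theta_h\geq0$. In the paper's convention, write $|\xi|_h^2=|\zeta|^2e^{-\eta(b)}$ for a point $\xi$ of $\Tot(L)$ with fibre coordinate $\zeta$ in a local frame, so that $\ii\ddbar\eta=\ii\Theta_h\geq0$. Near infinity we use the inverse fibre coordinate $t=1/\zeta$. The function
\[
\varphi=-\log|\xi|_h^2=\log|t|^2+\eta(b)
\]
is then plurisubharmonic on $\{|\xi|_h>0\}$, because $\log|t|^2$ is pluriharmonic off $t=0$ and $\eta$ is plurisubharmonic. It tends to $-\infty$ exactly on $E=\{t=0\}$, and truncation shows that it is plurisubharmonic across $E$ as well. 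Equivalently, $\varphi=\log|s_E|^2_{h'}$ for the canonical section of $\mathcal O(E)$ on the completion, with the induced metric having $\ii\Theta\leq0$ near $E$. This is exactly the divisor situation recorded after Corollary~\ref{cor:deletion}.

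To obtain a global nonpositive function we cut off. On the region $\{|\xi|_h\geq1\}$, which is a neighbourhood $N$ of $E$, set $\Phi=\min\{\varphi,0\}$-type data: there $\varphi\leq0$ already, since $\varphi=-\log|\xi|_h^2\leq0$. Elsewhere set $\Phi=\max\{\varphi,\ldots\}$ modified to $0$. Concretely, take
\[
\Phi=\min\{0,\varphi\}\ \text{on}\ \{|\xi|_h\geq1\},\qquad \Phi=0\ \text{on}\ \{|\xi|_h<1\},\qquad \Phi=-\infty\ \text{on}\ E.
\]
This coincides with $\varphi$ on $\{|\xi|_h\geq1\}$ and is zero inside, so it is upper semicontinuous and finite on all of $\Tot(L)$. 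On the smaller neighbourhood $N=\{|\xi|_h>1\}\cup E$, the restriction $\Phi|_{N\cap\Tot(L)}=\varphi$ is plurisubharmonic. Its upper level sets $\{\Phi\geq-R\}=\{|\xi|_h\leq e^{R/2}\}$ are compact disc bundles over the compact base $B$ and miss $E$.

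The main obstacle is the sign bookkeeping in the curvature convention. We must check that semipositivity of $L$ corresponds to plurisubharmonicity of $-\log|\xi|_h^2$, and not of $+\log|\xi|^2_h$. Since $|\xi|^2_h=|\zeta|^2e^{-\eta}$ with $\ii\ddbar\eta\geq0$, we get $-\log|\xi|^2_h=-\log|\zeta|^2+\eta$, whose Levi form equals that of $\eta$ off the zero section, and so is $\geq0$. This confirms the sign.

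With these properties in hand, Lemma~\ref{lem:point} applies, through the point base case of Theorem~\ref{thm:fibers}, and every positive supersolution is constant. Theorem~\ref{thm:levi} then yields that every $P_\alpha$ is critical and that Gauduchon metrics are unique up to scaling, and Theorem~\ref{thm:existence} supplies existence.
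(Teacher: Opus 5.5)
Your proposal is correct and matches the paper's proof: the paper uses exactly the same potential $\Phi=-\log\max\{1,|\xi|_h^2\}$ (written there as $-\log\max\{1,t\}$ with $t=|w|^2e^{-\phi}$) and applies Theorem~\ref{thm:fibers} with a point base. The only difference is that the paper works on the one-point compactification of $\Tot(L)$ rather than your projective completion $\PP(L\oplus\mathcal O_B)$. Definition~\ref{def:completion} permits either choice, since it only asks for a compact Hausdorff $X$, and your choice has the small bonus that Corollary~\ref{cor:deletion} also applies directly.
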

\begin{proof}
In a frame $|e|^2=e^{-\phi}$, semipositive curvature
means $\ii\ddbar\phi\geq0$. The squared norm
$t=|w|^2e^{-\phi}$ is proper, since its sublevels are
closed disk bundles over compact $B$. For $t>0$,
\[
\ii\ddbar(-\log t)=\pi^*(\ii\ddbar\phi)\geq0.
\]
On the one point compactification set
$\Phi=-\log\max\{1,t\}$ and $\Phi(\infty)=-\infty$.
It is continuous on the total space, plurisubharmonic
on $t>1$, and its upper levels are compact norm disks.
Theorem~\ref{thm:fibers} applies with a point base.
\end{proof}

\begin{example}\label{ex:rank-two}
The threefold $M=\PP^3\setminus\PP^1$, identified with
$\Tot(\mathcal O_{\PP^1}(1)^{\oplus2})$, has two
conformal Gauduchon metrics which are not related by scaling.
\end{example}
\begin{proof}
Write $M=\{[u:v]:u\in\C^2,\ v\in\C^2\setminus\{0\}\}$
and $\pi([u:v])=[v]$. The brackets $[u:v]$ denote homogeneous projective coordinates:
$(u,v)$ is taken up to multiplication by a common nonzero complex scalar.
The symbol $[v]$ denotes the corresponding point of $\PP^1$.
The notation $\operatorname{Hom}(\C v,\C^2)$ denotes the space of complex linear maps
from the line $\C v$ to $\C^2$.
The fiber is
$\operatorname{Hom}(\C v,\C^2)$, which identifies the
bundle. Its zero section is $\{u=0\}\simeq\PP^1$.
Put
\[
t=\frac{|u|^2}{|v|^2},\quad \omega=\pi^*\omega_B,
\quad \omega_B=\ii\ddbar\log(1+|z|^2).
\]
In the affine chart $z=v_1/v_0$, $w=u/v_0$, we have
$t=|w|^2/(1+|z|^2)$; the other chart has
$z'=1/z$, $w'=w/z$. Define
\begin{align}\label{eq:rank-two-data}
A(t)&=\tfrac12(1+t)(2+7t+4t^2),&
f(t)&=\frac{1+2t}{1+t},\notag\\
\psi([v])&=\frac{|v_0|^2}{|v|^2}-\tfrac12,&
h&=1+\tfrac14f(t)\psi,\\
\rho(t)&=A(t)^{-1}\exp\left(-\int_0^t\frac{ds}{A(s)}\right),&
\alpha&=(A(t)+t)\omega+\ii\ddbar t.\notag
\end{align}
These are smooth at the zero section, $A,\rho>0$, and
$3/4<h<5/4$ because $1\leq f<2$ and $|\psi|\leq1/2$.
Functions on the base, such as $\psi$, are viewed as functions on $M$
by pullback through $\pi$.
For $S=1+|z|^2$, set $\theta_a=dw_a-w_a\partial\log S$.
Direct differentiation gives
\[
V=\frac{\ii}{S}\sum_{a=1}^2\theta_a\wedge\bar\theta_a,
\qquad \ii\ddbar t=V-t\omega,\qquad
\partial t=S^{-1}\sum_a\bar w_a\theta_a.
\]
Thus $\alpha=A\omega+V$ is positive; its matrix in a
normal unitary bundle frame is $\operatorname{diag}(A,1,1)$.
The notation $\operatorname{diag}(A,1,1)$ denotes the diagonal matrix
with these three entries.
On the base, $L_{\omega_B}\psi=-2\psi$.
Taking traces gives
\[
|\partial t|_\alpha^2=t,\qquad
L_\alpha t=2-t/A,\qquad L_\alpha\psi=-2\psi/A.
\]
The mixed trace of $\partial t$ and $\partial\psi$ is
zero because these are vertical and horizontal. Therefore
\begin{equation}\label{eq:rank-two-harmonic}
L_\alpha(f(t)\psi)
=\left(tf''+2f'-\frac{tf'+2f}{A}\right)\psi=0.
\end{equation}
Indeed $f'=(1+t)^{-2}$, $f''=-2(1+t)^{-3}$, and
$A(tf''+2f')=tf'+2f$. Hence $L_\alpha h=0$.
Since $\omega$ and $\ii\ddbar t$ are closed,
$d\alpha=(A'+1)dt\wedge\omega$. Their ranks give
$\omega^2=0$, $dt\wedge V^2=0$, and
$dt\wedge\alpha^2=2A\,dt\wedge\omega\wedge V$.
Using $\rho'/\rho=-(A'+1)/A$ yields
\[
d(\rho\alpha^2)
=\left(\rho'+\rho\frac{A'+1}{A}\right)dt\wedge\alpha^2=0.
\]
Thus $\beta=\rho^{1/2}\alpha$ is balanced. For
$\Omega=\rho\alpha^2$, its type and closedness give
$\partial\Omega=\bar\partial\Omega=0$, so
\[
\ddbar(h\Omega)=\ddbar h\wedge\Omega=0
\]
by \eqref{eq:rank-two-harmonic}. Consequently $\beta$
and $h^{1/2}\beta$ are Gauduchon. On the zero section
$h=1+\psi/4$ is nonconstant, which excludes scaling.
\end{proof}
The logarithmic norm explains why the line bundle argument
does not apply here. In these coordinates,
\[
-\log t=\log(1+|z|^2)-\log(|w_1|^2+|w_2|^2),\qquad
\left.\frac{\partial^2(-\log t)}{\partial w_2\partial\bar w_2}
\right|_{(z,w_1,w_2)=(0,1,0)}=-1.
\]
The compact norm disks persist, but the extra fiber direction
changes the Levi sign.

Finally, for the Cousin group of Example~\ref{ex:cousin},
the domain $D=f^{-1}((-1,1))$ fails uniqueness although
$G$ has it. The Euclidean form descends to a flat K\"ahler
metric $\alpha_0$. On $D$, $u=2+f$ is positive nonconstant
pluriharmonic, and $\ddbar(u\alpha_0)=0$.
Thus $\alpha_0|_D$ and $u\alpha_0|_D$ are distinct up to
scaling. The compact level sets are unchanged, but the
pluriharmonic coordinate has bounded range.

\section{Positive solutions of the complex Laplace equation}\label{sec:harmonic}
By \eqref{eq:adjoint}, the density equation \eqref{eq:density} is the
weighted formal adjoint equation for the complex Laplacian. We compare
uniqueness for the density equation with uniqueness of positive solutions
of $L_g u=0$. This completes the proofs of Theorems~\ref{thm:levi}
and~\ref{thm:lie}. We also prove the equivalence for a fixed conformal
class which contains a balanced metric.

\subsection{Balanced conformal classes}
We use the complex Laplacian $L_g$ defined in \eqref{eq:chern}.
If $G=(g_{i\bar j})$ is the coefficient matrix in holomorphic coordinates,
then
\begin{equation}\label{eq:harmonic-operators}
 L_g u=\sum_{i,j}(G^{-1})_{ji}\partial_i\bar\partial_j u,
 \qquad
 D(A)=\sum_{i,j}\partial_i\bar\partial_j A_{ji}.
\end{equation}
Here $\partial_i=\partial/\partial z_i$ and
$\bar\partial_j=\partial/\partial\bar z_j$ are the Wirtinger derivatives;
$D(A)$ denotes the same operator as $\mathcal D A$ in \eqref{eq:divergence}.
All solutions below are smooth and strictly positive. For the relation
between the complex Laplacian and the Laplace--Beltrami operator, see
\cite[Lemma~1.1]{ACS}.

\begin{proposition}\label{prop:harmonic-conformal}
Let $h=vg$, where $v$ is smooth and positive. Then $L_h=v^{-1}L_g$.
Thus $L_h u=0$ and $L_g u=0$ have the same positive solutions.
\end{proposition}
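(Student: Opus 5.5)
The argument is a direct computation in one holomorphic chart, using the coordinate formula \eqref{eq:harmonic-operators} (equivalently \eqref{eq:chern}).

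Write $G=(g_{i\bar j})$ for the coefficient matrix of $g$. The metric $h=vg$ then has coefficient matrix $vG$, and its inverse is $(vG)^{-1}=v^{-1}G^{-1}$. Because $v$ is a scalar function, it commutes with every matrix entry. Substituting into \eqref{eq:harmonic-operators} gives
\[
L_hu=\sum_{i,j}\big((vG)^{-1}\big)_{ji}\partial_i\bar\partial_ju
=v^{-1}\sum_{i,j}(G^{-1})_{ji}\partial_i\bar\partial_ju=v^{-1}L_gu .
\]
No derivative of $v$ appears, because the complex Laplacian involves only the inverse metric contracted against the complex Hessian $\partial_i\bar\partial_j u$. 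There is no Christoffel-type first order term that depends on derivatives of the metric.

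This is the point where a Riemannian intuition could mislead. For the Laplace--Beltrami operator, a conformal change produces first order terms, in real dimension $2n\geq4$. Here no such terms arise, since $L_g$ is defined as $\tr_g(\ii\ddbar u)$. In the invariant form $L_gu=n\,\ii\ddbar u\wedge g^{n-1}/g^n$, the numerator scales by $v^{n-1}$ and the denominator by $v^n$, which gives the same factor $v^{-1}$ independently of coordinates. So the identity is chart independent and holds globally.

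For the second assertion, $v>0$ everywhere, so $v^{-1}$ vanishes nowhere. Hence $L_hu=0$ holds exactly when $L_gu=0$. In particular the two equations have the same positive smooth solutions. By the same reasoning, supersolutions of $P_h$ and $P_g$ also coincide, since multiplication by $v^{-1}>0$ preserves sign. I expect no real obstacle.
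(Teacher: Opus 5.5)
Your proposal is correct and follows the paper's proof: substitute $(vG)^{-1}=v^{-1}G^{-1}$ into \eqref{eq:harmonic-operators}, then use that multiplication by the nowhere-vanishing $v^{-1}$ preserves the kernel. Your extra check via the invariant form $n\,\ii\ddbar u\wedge g^{n-1}/g^n$ and your remark about supersolutions are also correct, though not needed.
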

\begin{proof}
Substitute $(vG)^{-1}=v^{-1}G^{-1}$ in
\eqref{eq:harmonic-operators}. Multiplication by the nowhere zero
function $v^{-1}$ preserves the kernel.
\end{proof}

\begin{theorem}\label{thm:harmonic-balanced}
Let $M$ be a nonempty complex manifold of dimension $n\geq2$, and let
$g$ be a Hermitian metric whose conformal class contains a balanced
metric. The Gauduchon metric in $[g]$ is unique up to scaling if and only
if every positive solution of $L_g u=0$ is constant. No condition on the
ends or completeness is required.
\end{theorem}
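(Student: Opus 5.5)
By Proposition~\ref{prop:harmonic-conformal}, the set of positive solutions of $L_g u=0$ depends only on the conformal class $[g]$. The Gauduchon metrics in $[g]$ also depend only on $[g]$. So I would replace $g$ by the balanced metric $\beta\in[g]$ and work with it throughout. The plan is to show that, for a balanced metric, the adjoint density equation and the Laplace equation have the same positive solutions. The equivalence then follows from Proposition~\ref{prop:rays}.

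Write $B$ for the coefficient matrix of $\beta$ and $A=\adj B$. Balanced means $d\beta^{n-1}=0$, so $\partial\beta^{n-1}=\bar\partial\beta^{n-1}=0$. In cofactor coordinates these are the first order divergence identities
\[
\sum_i\partial_iA_{ji}=0,\qquad\sum_j\bar\partial_jA_{ji}=0,
\]
in the same form as \eqref{eq:cofactor-divergences}. This is the coordinate form of the closedness of the $(n-1,n-1)$-form represented by $A$. The cleanest way to see it is as the form identity $\ddbar(u\beta^{n-1})=\ddbar u\wedge\beta^{n-1}$, which holds for every smooth $u$ because $\partial\beta^{n-1}=\bar\partial\beta^{n-1}=0$.

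Expanding $\mathcal D(uA)$ by the product rule and using these identities removes the first and zeroth order terms, exactly as in Lemma~\ref{lem:volume-frame}. This gives
\[
\mathcal D(uA)=\sum_{i,j}A_{ji}u_{z_i\bar z_j}=(\det B)\,L_\beta u.
\]
The last equality holds because $A=(\det B)B^{-1}$, compared with \eqref{eq:chern}. Equivalently, by \eqref{eq:chern} and the form identity above,
\[
n\,\ii\ddbar(u\beta^{n-1})=n\,\ii\ddbar u\wedge\beta^{n-1}=(L_\beta u)\,\beta^n.
\]
Either way, \eqref{eq:adjoint} gives $P_\beta^*u=-L_\beta u=P_\beta u$. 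So a positive smooth $\rho$ satisfies $P_\beta^*\rho=0$ exactly when $L_\beta\rho=0$.

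By Proposition~\ref{prop:rays}, uniqueness up to scaling of Gauduchon metrics in $[\beta]=[g]$ is equivalent to proportionality of all positive adjoint solutions. By the previous step, that is proportionality of all positive solutions of $L_\beta u=0$. Since the constant $1$ is such a solution, this says every positive solution is constant. Existence comes for free here: $\beta$ itself is Gauduchon, which corresponds to $\rho=1$, so no appeal to Theorem~\ref{thm:existence} is needed. This is why no hypothesis on ends, completeness, or even noncompactness is required. Pinchover's theory is never used, only the pointwise identity $P^*_\beta=P_\beta$.

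The only step needing care is the divergence identity for a balanced metric, and in particular the bookkeeping of the cofactor index order $kj$ in \eqref{eq:density}. I would handle it at the level of forms via $\ddbar(u\beta^{n-1})=\ddbar u\wedge\beta^{n-1}$ and \eqref{eq:chern}, which avoids the index conventions entirely. Finally, the reduction to $\beta$ must respect the convention that $\rho$ in Proposition~\ref{prop:rays} is taken relative to the chosen representative; since the proposition is stated for any representative of the class, this is harmless.
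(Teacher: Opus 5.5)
Your proposal is correct and follows the paper's proof. Balancedness gives the divergence identities that kill the first and zeroth order terms, so $\mathcal D(\rho\adj G)=\det(G)\,L_g\rho$. Proposition~\ref{prop:rays} together with the solution $1$ then gives the equivalence, and Proposition~\ref{prop:harmonic-conformal} transfers it between the balanced representative and $g$. The differences are cosmetic: you reduce to the balanced metric at the start rather than the end, and you add the coordinate-free identity $\ddbar(u\beta^{n-1})=\ddbar u\wedge\beta^{n-1}$, which avoids the index bookkeeping.
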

\begin{proof}
Suppose first that $g$ is balanced. For $A=\adj G$, the equation
$d(g^{n-1})=0$ gives
\begin{equation}\label{eq:harmonic-balanced}
 \sum_j\bar\partial_j A_{ji}=0\quad\text{for each }i,
 \qquad
 \sum_i\partial_i A_{ji}=0\quad\text{for each }j.
\end{equation}
The second identity is the conjugate of the first, using Hermitian
symmetry. Product differentiation now gives
\begin{equation}\label{eq:harmonic-density}
 D(\rho\adj G)=\sum_{i,j}A_{ji}\partial_i\bar\partial_j\rho
              =\det(G)L_g\rho.
\end{equation}
Indeed, the first order terms vanish by
\eqref{eq:harmonic-balanced}, as does $\rho D(A)$.
Since $\det G>0$, a positive function solves the density equation if and
only if it solves $L_g\rho=0$. Proposition~\ref{prop:rays} identifies
Gauduchon metrics in $[g]$ with positive densities by
$h=\rho^{1/(n-1)}g$. Thus uniqueness of these metrics up to scaling is
equivalent to proportionality of all positive solutions of the complex
Laplace equation. Comparison with the solution $1$ proves the assertion.

If $b\in[g]$ is balanced, apply the result to $b$.
Proposition~\ref{prop:harmonic-conformal} identifies the positive solutions,
and $[b]=[g]$ identifies the Gauduchon metrics being compared.
\end{proof}

\begin{proof}[Proof of Theorem~\ref{thm:levi}: condition (2) implies (4)]
If $P_g=-L_g$ is critical, every positive supersolution is constant by
Proposition~\ref{prop:pinchover}. Every positive solution of $L_g u=0$
is such a supersolution, so it is constant.
\end{proof}

\subsection{Manifolds with at least two ends}
The \hyperref[proof:levi-metric-ends]{Gauduchon part of the proof of
Theorem~\ref{thm:levi}} uses positive adjoint solutions. We now use the
Martin boundary to prove the corresponding assertion for positive
solutions of the complex Laplace equation.

\phantomsection\label{proof:levi-positive-ends}
\begin{proof}[Proof of Theorem~\ref{thm:levi}: positive solutions when $M$ has at least two ends]
Fix a Hermitian metric $g$ and put $P=-L_g$. Since $P1=0$, the operator is either critical or
subcritical. If it is critical, every positive supersolution is a
constant multiple of $1$ by \cite[Corollary~2.8(ii)]{Pinchover}.
Every positive solution is therefore constant.

Conversely, suppose every positive solution is constant. If $P$ were
subcritical, its positive Green function would define a Martin
compactification. Distinct ends lie in distinct components of the
Martin boundary, and each such component contains a minimal point;
see \cite[Section~7, Corollaries~4--5, and Section~8, Theorem~6 and
Corollary~3]{TaylorMartin}. Choose points $\xi,\eta$ in two such components
and normalize their Martin kernels at a fixed $x_0\in M$:
\[
 K_P(x_0,\xi)=K_P(x_0,\eta)=1.
\]
The kernels separate Martin boundary points. They cannot be
proportional, since normalization would make the proportionality
constant $1$. This contradicts the assumption that every positive
solution is constant. Thus $P$ is critical. The
\hyperref[proof:levi-metric-ends]{Gauduchon part of the proof} shows that
this is equivalent to uniqueness of the Gauduchon metric in $[g]$.
Applying these equivalences to every metric proves that (4) is equivalent
to (1), (2), and (3) when $M$ has at least two ends.
\end{proof}

\subsection{Translation quotients}
For abelian groups, we use a nonconstant positive solution of the
complex Laplace equation. Put $x=\operatorname{Re}z_1$, $y=\operatorname{Re}z_2$ and
\begin{equation}\label{eq:harmonic-height}
 G(y)=\begin{pmatrix}1&-y\\-y&1+y^2\end{pmatrix},\quad
 A(y)=G(y)^{-1}=\begin{pmatrix}1+y^2&y\\y&1\end{pmatrix},\quad
 u(x,y)=e^{x-y^2/2}.
\end{equation}
Both matrices are positive definite and have determinant $1$.
In dimension $2+m$, use $G(y)\oplus I_m$.

\begin{lemma}\label{lem:harmonic-height}
The function $u$ in \eqref{eq:harmonic-height} is a positive solution of
$L_g u=0$ for the metric $G(y)\oplus I_m$.
\end{lemma}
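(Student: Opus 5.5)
Positivity is immediate, since $u$ is an exponential; what has to be checked is $L_gu=0$. The plan is a direct computation with \eqref{eq:harmonic-operators}. The matrix $G(y)\oplus I_m$ depends only on $y=\Re z_2$, and the functions $x,y$ and the forms $dz_j$ descend to translation quotients, so the computation can be done on $\C^{2+m}$ and passes to any such quotient. Because $u$ depends only on $z_1,z_2$ and the inverse matrix is block diagonal with blocks $A(y)$ and $I_m$, we get
\[
L_gu=\sum_{i,j\in\{1,2\}}A_{ji}\,\partial_i\bar\partial_j u ,
\]
and all terms involving the remaining coordinates vanish.

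Next we compute the Wirtinger derivatives. For a function of $x=\Re z_1$ and $y=\Re z_2$ alone, $\partial_1=\tfrac12\partial_x$ and $\partial_2=\tfrac12\partial_y$, and the barred derivatives are the same. Hence $\partial_i\bar\partial_ju=\tfrac14\,\partial_a\partial_bu$, where $(a,b)$ is the real pair corresponding to $(i,j)$. With $u=e^{x-y^2/2}$ this gives
\[
u_{xx}=u,\qquad u_{xy}=-yu,\qquad u_{yy}=(y^2-1)u .
\]

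We then contract with $A=\left(\begin{smallmatrix}1+y^2&y\\y&1\end{smallmatrix}\right)$, which is real symmetric, so the index order $A_{ji}$ plays no role:
\[
4L_gu=(1+y^2)u+2y(-y)u+(y^2-1)u=0 .
\]
This proves the lemma. As a consistency check, the computation mirrors the earlier check for \eqref{eq:abelian-pair}, where $\rho=e^{-x+y^2/2}=1/u$ solved the adjoint equation for the same matrix.

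The main obstacle is keeping the index convention and the factors of $\tfrac14$ consistent. The factor cancels uniformly, and the symmetry of $A$ removes any ambiguity in the order of the cofactor indices, so no step should cause real difficulty.
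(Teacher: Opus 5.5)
Your proposal is correct and matches the paper's proof. The paper also computes the complex Hessian $\frac{u}{4}\left(\begin{smallmatrix}1&-y\\-y&y^2-1\end{smallmatrix}\right)$ in the first two coordinates, contracts it with $A(y)$ to obtain $\frac{u}{4}\bigl((1+y^2)-2y^2+(y^2-1)\bigr)=0$, and notes that the remaining coordinates contribute nothing.
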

\begin{proof}
Its complex Hessian in the first two coordinates is
\begin{equation}\label{eq:harmonic-hessian}
 (\partial_i\bar\partial_j u)_{1\leq i,j\leq2}
  =\frac{u}{4}\begin{pmatrix}1&-y\\-y&y^2-1\end{pmatrix}.
\end{equation}
Contraction with $A(y)$ gives
\[
 \frac{u}{4}\big((1+y^2)-2y^2+(y^2-1)\big)=0.
\]
The derivatives in the remaining coordinates vanish.
\end{proof}

\begin{proposition}\label{prop:harmonic-quotient}
Let $X=\C^{2+m}/\Lambda$ have its translation complex structure, and
suppose each period has zero real part in its first two coordinates.
There is a Hermitian metric on $X$ for which the complex Laplace
equation has a nonconstant positive solution.
\end{proposition}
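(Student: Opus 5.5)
The plan is to transport the explicit local computation of Lemma~\ref{lem:harmonic-height} to the quotient $X$. Let $x=\Re z_1$ and $y=\Re z_2$. By hypothesis every period $\lambda\in\Lambda$ satisfies $\Re\lambda_1=\Re\lambda_2=0$. Hence $x$ and $y$ are invariant under translation by $\Lambda$ and descend to smooth real functions on $X$.

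The coordinate one-forms $dz_j$ are also translation invariant. So the Hermitian form
\[
g=\ii\sum_{j,k}(G(y)\oplus I_m)_{j\bar k}\,dz_j\wedge d\bar z_k
\]
on $\C^{2+m}$ is $\Lambda$-invariant, because its coefficients depend only on $y$. It therefore descends to a smooth Hermitian metric on $X$, which is positive since $\det G(y)=1$ and $G(y)$ is positive definite. For the same reason the function $u=e^{x-y^2/2}$ descends to a smooth positive function on $X$.

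The operator $L_g$ is computed in the local holomorphic coordinates given by the $z_j$, since the quotient map is a local biholomorphism. It is therefore given by the same formula \eqref{eq:harmonic-operators} upstairs and downstairs. Lemma~\ref{lem:harmonic-height} then yields $L_gu=0$ on $X$.

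It remains to check that $u$ is nonconstant on $X$. Consider the points $[0]$ and $[(1,0,\ldots,0)]$. Their $x$-values are $0$ and $1$, and $x$ is well defined on $X$, so $u$ takes the values $1$ and $e$ there. Thus $u$ is a nonconstant positive solution.

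The only step requiring care is the descent, namely that the periods do not change $x$ or $y$. This is exactly the hypothesis on the real parts, so no real obstacle is expected. Note that the hypothesis imposes no restriction on the imaginary parts of the first two coordinates, and those are not needed.
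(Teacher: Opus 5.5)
Your proposal is correct and follows the paper's proof essentially step for step. Like the paper, you descend $x,y$, the matrix $G(y)\oplus I_m$ and $u=e^{x-y^2/2}$ using the vanishing real parts of the periods and the translation-invariant frame $dz_j$, apply Lemma~\ref{lem:harmonic-height} chartwise, and get nonconstancy from the values $1$ and $e$ at $[0]$ and $[(1,0,\ldots,0)]$.
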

\begin{proof}
The functions $x,y$ descend to $X$, so the matrix $G(y)\oplus I_m$ and
the function $u=e^{x-y^2/2}$ descend. Translation chart changes have
identity Jacobian, and these coefficients define a Hermitian metric.
Lemma~\ref{lem:harmonic-height} proves the equation in every chart.
At the images of $0$ and $(1,0,\ldots,0)$ the function has values $1$
and $e$, respectively. It is therefore nonconstant.
\end{proof}

This is the metric in \eqref{eq:abelian-pair}. There the density $\rho=e^{-x+y^2/2}$ satisfies
$D(A)=D(\rho A)=0$ and gives a Gauduchon pair which is not related by
scaling. The positive solution above is $u=\rho^{-1}$; its equation
follows from \eqref{eq:harmonic-hessian}.

\begin{lemma}\label{lem:harmonic-descent}
Let $F:N\to M$ be a surjective local biholomorphism between complex
manifolds of the same dimension. If, for every Hermitian metric on $N$, every
positive solution of the complex Laplace equation is constant, then the
same holds on $M$.
\end{lemma}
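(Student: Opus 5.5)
We argue by contraposition and pull back along $F$. Suppose $\alpha$ is a Hermitian metric on $M$ and $u>0$ is a nonconstant smooth solution of $L_\alpha u=0$ on $M$. We put $\beta=F^*\alpha$ on $N$. Since $F$ is a local biholomorphism, $dF$ is an isomorphism on each holomorphic tangent space, so $\beta$ is a smooth positive real $(1,1)$-form, that is, a Hermitian metric on $N$. We then show that $u\circ F$ is a positive nonconstant solution of $L_\beta(u\circ F)=0$, contradicting the hypothesis on $N$.

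The key identity is the one already used in the proof of Corollary~\ref{cor:cover}:
\[
L_{F^*\alpha}(u\circ F)=(L_\alpha u)\circ F .
\]
To verify it, we work locally. Near a point of $N$ we choose holomorphic coordinates $w$ on a neighborhood on which $F$ is a biholomorphism onto a coordinate patch with coordinates $z$ of $M$, and we transport coordinates so that $z=F(w)$. Then $F^*$ commutes with $\partial$ and $\bar\partial$, so
\[
\ii\ddbar(u\circ F)=F^*(\ii\ddbar u).
\]
The trace in \eqref{eq:chern} can be written as a quotient of top-degree forms,
\[
\frac{n\,\ii\ddbar u\wedge\alpha^{n-1}}{\alpha^n}.
\]
Pulling back both numerator and denominator by $F$ multiplies each by the same nonzero Jacobian factor, since $F$ is locally a diffeomorphism. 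Hence the ratio is simply composed with $F$, which gives the identity. Alternatively, one can check it directly in matrices: with $G_\beta=J^TG_\alpha\bar J$ and complex Hessian $J^T(\text{Hess}\,u)\bar J$, the Jacobians cancel in the trace.

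It follows that $u\circ F>0$ is smooth and satisfies $L_\beta(u\circ F)=0$. It remains to see that $u\circ F$ is nonconstant. Since $F$ is surjective, $u\circ F$ takes every value of $u$; if $u$ takes two different values, so does $u\circ F$. This contradicts the assumption that every positive solution on $N$ is constant, and proves the lemma.

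Most of this is routine. The only point needing care is that $\beta$ is genuinely a Hermitian metric, which requires positivity of $F^*\alpha$; this is exactly where we use that $F$ is a local biholomorphism rather than an arbitrary holomorphic map. Note also that connectedness and noncompactness play no role in the argument.
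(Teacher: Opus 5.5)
Your proposal is correct and follows the paper's proof: you pull back the metric, apply the chain-rule identity $L_{F^*\alpha}(u\circ F)=(L_\alpha u)\circ F$, and use surjectivity of $F$ to transfer constancy from $u\circ F$ to $u$. The contrapositive framing and your explicit verification of the identity, via the top-degree quotient or the $J^TG\bar J$ transformation law, only spell out details the paper leaves implicit.
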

\begin{proof}
For a Hermitian metric $g$ on $M$, its pullback is a Hermitian metric
on $N$. The holomorphic chain rule gives
\[
 L_{F^*g}(u\circ F)=(L_g u)\circ F.
\]
Thus a positive solution of $L_g u=0$ pulls back to a positive solution
on $N$. It is constant by hypothesis, and surjectivity makes
$u$ constant.
\end{proof}

\begin{samepage}
\begin{theorem}\label{thm:harmonic-abelian}
Let $X=\C^n/\Lambda$ be a connected noncompact complex abelian Lie
group, with $n\geq2$. Set $W=\Span_{\R}\Lambda$.
The following conditions are equivalent:
\begin{enumerate}
\item For every Hermitian metric on $X$, every positive solution of the
complex Laplace equation is constant.
\item $\dim_{\C}(W\cap iW)=n-1$.
\end{enumerate}
\end{theorem}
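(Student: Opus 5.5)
The plan is to prove both directions separately, reusing the normal form of Lemma~\ref{lem:period-normal}. The proof is short and contains no hard analytic step.

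For (2)$\Rightarrow$(1), assume $\dim_\C(W\cap\ii W)=n-1$, so $q=n-1$. By Lemma~\ref{lem:period-normal}, after a complex linear coordinate change we are in case (1) or (2). Hence $[z]\mapsto z_1$ is a proper holomorphic surjection onto the parabolic surface $\C$, or $[z]\mapsto\Re z_1$ is a proper pluriharmonic surjection onto $\R$. Corollary~\ref{cor:proper-projection} then shows that $P_\alpha$ is critical for every Hermitian metric $\alpha$ on $X$. By Proposition~\ref{prop:pinchover}, every positive supersolution of $P_\alpha$ is constant. A positive solution of $L_\alpha u=0$ is such a supersolution, so it is constant. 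This is the implication (2)$\Rightarrow$(4) of Theorem~\ref{thm:levi}, and it needs no hypothesis on ends.

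For (1)$\Rightarrow$(2), argue by contraposition. Noncompactness gives $q\leq n-1$, so suppose $q\leq n-2$. Lemma~\ref{lem:period-normal}(3) gives coordinates with $\Re z_1=\Re z_2=0$ on $W$, hence on every period. Writing $n=2+m$ with $m\geq0$, Proposition~\ref{prop:harmonic-quotient} applies directly. The metric $G(y)\oplus I_m$ carries the positive solution $u=e^{x-y^2/2}$, and $u$ takes the distinct values $1$ and $e$. This violates (1).

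The only point needing care is bookkeeping: the linear coordinate change in Lemma~\ref{lem:period-normal} replaces $\Lambda$ by its image, and the statement should be invariant under this. A complex linear isomorphism $\C^n\to\C^n$ descends to a biholomorphism of the quotients, it carries $W$ to the span of the new lattice, and it preserves $\dim_\C(W\cap\ii W)$. Condition (1) is invariant under biholomorphism because metrics and complex Laplacians pull back by the chain rule (as in Lemma~\ref{lem:harmonic-descent}). Lemma~\ref{lem:period-normal} identifies $q=\dim_\C H$ with the quantity in (2), so the two cases above are exhaustive.
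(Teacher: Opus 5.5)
Your proposal is correct and follows the paper's proof essentially step for step. Both use the period normal form of Lemma~\ref{lem:period-normal}. For $q=n-1$ both use the proper projections onto $\C$ or $\R$ via Corollary~\ref{cor:proper-projection}, and for $q\leq n-2$ both use Proposition~\ref{prop:harmonic-quotient}. Both also check invariance under the linear change of presentation. The only cosmetic difference is that you pass through criticality and Proposition~\ref{prop:pinchover}, whereas the paper cites directly the step in the proof of Corollary~\ref{cor:proper-projection} that every positive supersolution is constant.
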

\end{samepage}
\begin{proof}
Use the period normal form in Lemma~\ref{lem:period-normal}.
If the periods have zero real part in two coordinates, then
$\dim_{\C}(W\cap iW)\leq n-2$, and
Proposition~\ref{prop:harmonic-quotient} supplies a nonconstant positive
solution. In the other two cases $W$ is the kernel of a complex
coordinate, or the kernel of the real part of one. Its complex part has
dimension $n-1$.

In these two cases the coordinate projection has compact fibers, with
base $\C$ or $\R$. The proof of Corollary~\ref{cor:proper-projection}
shows that every positive supersolution is constant for every metric.
It uses the fact that $\C$ is parabolic in the first case and that every
nonnegative concave function on $\R$ is constant in the second.
The attained fiber minimum is a global minimum, so the strong minimum
principle makes the function constant on $X$. This applies in particular
to positive solutions of the complex Laplace equation.

The normalized and original presentations are related by surjective
local biholomorphisms in both directions. Apply
Lemma~\ref{lem:harmonic-descent} to both maps. The coordinate change is
complex linear and preserves $\dim_{\C}(W\cap iW)$.
\end{proof}

\subsection{Complex Lie groups}
The construction in Lemma~\ref{lem:volume-frame} gives a balanced
first metric. This observation lets us apply
Theorem~\ref{thm:harmonic-balanced} to its nonabelian examples.

\begin{lemma}\label{lem:harmonic-closed}
Every connected noncommutative complex Lie group of dimension $n\geq2$
has a balanced metric $g$ and a conformal Gauduchon metric $h$ which is
not a constant multiple of $g$.
\end{lemma}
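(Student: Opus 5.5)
The plan is to revisit the three cases in the proof of Proposition~\ref{prop:nonabelian} and observe that in each of them the first metric comes from Lemma~\ref{lem:volume-frame} via Lemma~\ref{lem:root}, so it suffices to show that such a metric is balanced. Let $H$ be a holomorphic $n\times r$ matrix of rank $n$ satisfying \eqref{eq:closed-columns}, let $A$ be its Gram cofactor \eqref{eq:gram-cofactor}, and let $g=G_A$ be the metric of Lemma~\ref{lem:root}, so that $\adj G_A=A$. Up to the fixed dimensional factor, $g^{n-1}$ is the $(n-1,n-1)$-form represented by $A$. The divergence identities \eqref{eq:cofactor-divergences} say $\sum_i\partial_{z_i}A_{ji}=0$ and $\sum_j\partial_{\bar z_j}A_{ji}=0$. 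These are exactly the balanced identities \eqref{eq:harmonic-balanced}, read in reverse: in coordinates, $\partial(g^{n-1})$ and $\bar\partial(g^{n-1})$ have coefficients given by these partial divergences of the cofactor matrix. Hence $d(g^{n-1})=0$. Globally, one can also see this directly: $g^{n-1}$ is a positive multiple of $\sum_a \ii^{(n-1)^2}\iota_{X_a}\Omega\wedge\overline{\iota_{X_a}\Omega}$, and each $\iota_{X_a}\Omega$ is closed by \eqref{eq:closed-columns}. This matches the balanced metric $\omega_{\rm bal}$ already noted in the section on holomorphic volume forms.

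With $g$ balanced, I would take the second metric to be $h=u^{1/(n-1)}g$, where $u$ is the positive nonconstant function produced in each case of Proposition~\ref{prop:nonabelian}. By \eqref{eq:cofactor-harmonic}, $\mathcal D(uA)=\sum A_{ji}u_{z_i\bar z_j}$, and this contraction vanishes:
\begin{enumerate}
\item In the non-solvable case it equals $|\det R_G|^{-2}$ times \eqref{eq:frame-harmonic}, as in Proposition~\ref{prop:group-lift} together with Lemma~\ref{lem:poisson}.
\item In the nilpotent case use the kernel \eqref{eq:radial-kernel} composed with $\Phi$.
\item In the non-nilpotent solvable case use $(|a|^2+|b|^2)^{-1}$.
\end{enumerate}
Then \eqref{eq:density} shows that $h$ is Gauduchon. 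It is not a constant multiple of $g$ because $u$ is nonconstant.

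The main obstacle is the nilpotent case. There the second horizontal column was multiplied by $e^{f_1}$, and one must check that the modified matrix still satisfies \eqref{eq:closed-columns}; otherwise Lemma~\ref{lem:volume-frame} does not apply, and balancedness would be lost along with it. The column stays closed because $X_2f_1=0$: the divergence of $e^{f_1}H_{\cdot2}$ is $e^{f_1}$ times the divergence of $H_{\cdot 2}$ plus $e^{f_1}\mu X_2f_1=0$. So the divergence identities, and hence balancedness, persist. Combining all three cases proves the lemma. As a consequence, Theorem~\ref{thm:harmonic-balanced} applied to $[g]$ yields a nonconstant positive solution of $L_g u=0$, which is in fact $u$ itself by \eqref{eq:harmonic-density}.
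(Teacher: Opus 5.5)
Your proposal is correct and follows essentially the same route as the paper: the Gram cofactor of closed holomorphic $(n-1,0)$-forms satisfies both divergence identities in \eqref{eq:harmonic-balanced}, so the first metric is balanced, and the positive multipliers from the cases of Proposition~\ref{prop:nonabelian} give the conformal Gauduchon metric that is not a constant multiple of it. Your explicit check that the column rescaled by $e^{f_1}$ stays closed, and your global description of $g^{n-1}$ via $\iota_{X_a}\Omega\wedge\overline{\iota_{X_a}\Omega}$, spell out points the paper leaves implicit.
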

\begin{proof}
For the closed holomorphic $(n-1,0)$-forms in
\eqref{eq:closed-columns}, let $F_a$ denote their vector coefficients.
Since these forms are closed, $\sum_i\partial_iF_a^i=0$. The cofactor density is
\[
 Q_{ji}=\sum_a\overline{F_a^j}F_a^i.
\]
Since the coefficients $F_a^i$ are holomorphic,
\[
 \sum_i\partial_iQ_{ji}
  =\sum_a\overline{F_a^j}\sum_i\partial_iF_a^i=0.
\]
Conjugation gives the other divergence identity in
\eqref{eq:harmonic-balanced}. The first metric, whose adjugate is $Q$,
is therefore balanced.

The proof of Proposition~\ref{prop:nonabelian} uses
Lemma~\ref{lem:volume-frame} for characters, affine quotients,
nilpotent groups, and semisimple groups. In each case, the calculation
above shows that the first metric is balanced. The positive multipliers
constructed there give a conformal Gauduchon metric which is not a
constant multiple of the first metric.
\end{proof}

\begin{proposition}\label{prop:harmonic-nonabelian}
On every connected noncommutative complex Lie group of dimension
$n\geq2$, there is a Hermitian metric for which the complex Laplace
equation has a nonconstant positive solution.
\end{proposition}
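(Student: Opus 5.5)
The plan is to combine Lemma~\ref{lem:harmonic-closed} with Theorem~\ref{thm:harmonic-balanced}. Let $G$ be a connected noncommutative complex Lie group of dimension $n\geq2$. Lemma~\ref{lem:harmonic-closed} provides a balanced metric $g$ on $G$ and a conformal Gauduchon metric $h\in[g]$ that is not a constant multiple of $g$. Since $g$ itself is balanced, it is Gauduchon: $d(g^{n-1})=0$ implies $\ddbar g^{n-1}=0$. So the Gauduchon metric in $[g]$ is not unique up to scaling.

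The conformal class $[g]$ contains a balanced metric, namely $g$. Theorem~\ref{thm:harmonic-balanced} therefore applies with no hypothesis on ends or completeness. Its equivalence shows that, since uniqueness fails, some positive solution of $L_g u=0$ is nonconstant. Taking this $g$ as the required Hermitian metric gives the proposition.

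To make the solution explicit, write $h=\rho^{1/(n-1)}g$ with $\rho>0$ nonconstant. This is possible by Proposition~\ref{prop:rays}: the densities of $g$ and $h$ are $1$ and $\rho$, and they are not proportional. Identity \eqref{eq:harmonic-density} gives $\mathcal D(\rho\adj G)=\det(G)L_g\rho$. The density equation $\mathcal D(\rho\adj G)=0$ then yields $L_g\rho=0$, so $u=\rho$ is the nonconstant positive solution. In the concrete cases this $\rho$ is the multiplier $u$ built in Proposition~\ref{prop:nonabelian}: the Poisson transform, the kernel $k\circ\Phi$, or $(|a|^2+|b|^2)^{-1}$.

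The only step needing care is checking that the first metric in each nonabelian construction is really balanced, as Lemma~\ref{lem:harmonic-closed} asserts. In the nilpotent case one column was multiplied by $e^{f_1}$, and one has to confirm that it remains closed, i.e. $\sum_i\partial_i(e^{f_1}H_{i2})=e^{f_1}X_2f_1=0$. Given that, the divergence identity \eqref{eq:cofactor-divergences} holds for every family used. The balanced condition then follows, and the argument above goes through uniformly.
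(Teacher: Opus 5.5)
Your proposal is correct and is essentially the paper's proof: you combine the balanced metric $g$ and the non-proportional Gauduchon metric $h$ from Lemma~\ref{lem:harmonic-closed} with the equivalence in Theorem~\ref{thm:harmonic-balanced}, and your explicit solution $u=\rho$ obtained from \eqref{eq:harmonic-density} is just what that theorem's proof produces. One small slip: the middle expression in your closedness check should carry the volume coefficient, $\sum_i\partial_i(e^{f_1}H_{i2})=e^{f_1}\mu\,X_2f_1$; this is harmless because $X_2f_1=0$.
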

\begin{proof}
For the pair $g,h$ in Lemma~\ref{lem:harmonic-closed}, uniqueness in
$[g]$ fails. Since $g$ is balanced,
Theorem~\ref{thm:harmonic-balanced} implies that $L_g u=0$ has a
nonconstant positive solution.
\end{proof}

\begin{proof}[Proof of Theorem~\ref{thm:lie}: the remaining equivalences]
Condition (1) forces commutativity by
Proposition~\ref{prop:harmonic-nonabelian}. The abelian translation
presentation gives compact complex tangent dimension
$\dim_{\C}(W\cap iW)$. Theorem~\ref{thm:harmonic-abelian} gives (4).

Conversely, (4) implies commutativity by Lemma~\ref{lem:central}. Apply the abelian translation presentation and
Theorem~\ref{thm:harmonic-abelian} to obtain (1).
The equivalence of (2) and (4) was proved in the
\hyperref[proof:lie-gauduchon]{Gauduchon part of the proof}.

The implication (1)$\Rightarrow$(4) in Theorem~\ref{thm:levi}
gives (3)$\Rightarrow$(1) here. Under (4), Lemma~\ref{lem:period-normal}
and Corollary~\ref{cor:proper-projection} show that every positive
supersolution is constant for every metric. Theorem~\ref{thm:levi}
then gives (3). Conjugacy of maximal compact subgroups
\cite[p.~47]{KKO} identifies the dimension for the specified $K$.
\end{proof}

\raggedbottom

\end{document}